\documentclass[a4paper,12pt]{amsart} 
\title{Logarithmic moduli of roots of line bundles on curves}

\usepackage{amsmath, amssymb, mathrsfs, amsthm, shorttoc, geometry, stmaryrd, comment}
\usepackage{mathtools} 
\usepackage{hyperref}
\usepackage{xcolor}
\hypersetup{
colorlinks,
linkcolor={black},
citecolor={blue!50!black},
urlcolor={blue!80!black}
}

\usepackage[all]{xy}

\usepackage{tabularx}
\usepackage{longtable}
\numberwithin{equation}{subsection}

\usepackage{enumitem}


\usepackage{cleveref}
\crefformat{equation}{(#2#1#3)}

\AtBeginDocument{\renewcommand{\ref}[1]{\cref{#1}}}

\usepackage{pgf,tikz}
\usetikzlibrary{matrix, calc, arrows}

\usepackage{tikz-cd} 

\usepackage{xparse}  
\usetikzlibrary{decorations.pathmorphing}

\makeatletter
\newcommand{\math@param}[3]{%
  \fontdimen#3
  \ifx#1\displaystyle\textfont#2
  \else\ifx#1\textstyle\textfont#2
  \else\ifx#1\scriptstyle\scriptfont#2
  \else\scriptscriptfont#2 \fi\fi\fi
}

\newdimen\@my@yshift
\NewDocumentCommand{\myvec}{m}{\mathord{\mathpalette\@myvec{#1}}}

\newcommand*{\@myvec}[2]{%
  \begin{tikzpicture}[baseline=(n.base)]
  \node (n) [inner sep=0]
    {\global\@my@yshift=-0.5\math@param{#1}{2}{5}%
     $\m@th #1#2$};
    \draw[decorate, decoration={snake, amplitude=0.3pt, segment length=2pt}]
      ([yshift=\@my@yshift]n.south west) -- ([yshift=\@my@yshift]n.south east);
  \end{tikzpicture}%
}
\makeatother


\makeatletter
\newcommand*{\doublerightarrow}[2]{\mathrel{
  \settowidth{\@tempdima}{$\scriptstyle#1$}
  \settowidth{\@tempdimb}{$\scriptstyle#2$}
  \ifdim\@tempdimb>\@tempdima \@tempdima=\@tempdimb\fi
  \mathop{\vcenter{
    \offinterlineskip\ialign{\hbox to\dimexpr\@tempdima+1em{##}\cr
    \rightarrowfill\cr\noalign{\kern.5ex}
    \rightarrowfill\cr}}}\limits^{\!#1}_{\!#2}}}
\newcommand*{\triplerightarrow}[1]{\mathrel{
  \settowidth{\@tempdima}{$\scriptstyle#1$}
  \mathop{\vcenter{
    \offinterlineskip\ialign{\hbox to\dimexpr\@tempdima+1em{##}\cr
    \rightarrowfill\cr\noalign{\kern.5ex}
    \rightarrowfill\cr\noalign{\kern.5ex}
    \rightarrowfill\cr}}}\limits^{\!#1}}}
\makeatother

\newcommand{\on}[1]{\operatorname{#1}}
\newcommand{\bb}[1]{{\mathbb{#1}}}

\newcommand{\ca}[1]{{\mathcal{#1}}}
\newcommand{\bd}[1]{{\mathbf{#1}}}

\newcommand{\ul}[1]{{\underline{#1}}}


\def\sheafhom{\mathcal{H}om}





\newcommand{\sub}{\subseteq}

\theoremstyle{definition}
\newtheorem{definition}{Definition}[section]

\theoremstyle{plain}

\newtheorem{proposition}[definition]{Proposition}
\newtheorem{lemma}[definition]{Lemma}
\newtheorem{theorem}[definition]{Theorem}
\newtheorem{corollary}[definition]{Corollary}

\theoremstyle{remark}

\newtheorem{remark}[definition]{Remark}

\newtheorem{example}[definition]{Example}


\usepackage{letltxmacro}
\LetLtxMacro{\phiorig}{\phi}
\renewcommand{\phi}{\varphi}




\newcommand{\Gcomment}[1]{{\color{purple}G: #1}}

\author{David Holmes and Giulio Orecchia}
\date{\today}




\newcounter{nootje}
\setcounter{nootje}{1}



\newcommand{\beq}{\begin{equation}}
\newcommand{\eeq}{\end{equation}}
\newcommand{\beqs}{\begin{equation*}}
\newcommand{\eeqs}{\end{equation*}}
\renewcommand{\k}{k}
\tikzset{
  symbol/.style={
    draw=none,
    every to/.append style={
      edge node={node [sloped, allow upside down, auto=false]{$#1$}}}
  }
}
\makeatletter
\@namedef{subjclassname@2020}{%
  \textup{2020} Mathematics Subject Classification}
\makeatother
\subjclass[2020]{14H40, 14A21, 14C25}
\begin{document}
\maketitle
\begin{abstract} 
We use the theory of logarithmic line bundles to construct compactifications of spaces of roots of a line bundle on a family of curves, generalising work of a number of authors. This runs via a study of the torsion in the tropical and logarithmic jacobians (recently constructed by Molcho and Wise). Our moduli space carries a `double ramification cycle' measuring the locus where the given root is isomorphic to the trivial bundle, and we give a tautological formula for this class in the language of piecewise polynomial functions (as recently developed by Molcho-Pandharipande-Schmitt and Holmes-Schwarz). 
\end{abstract}

%
%
%

\tableofcontents
\newcommand{\Spec}{\on{Spec}}
\newcommand{\Mtildes}{ \widetilde{\ca M}^\Sigma}
\newcommand{\sch}[1]{\textcolor{blue}{#1}}
\newcommand{\Mbar}{\overline{\ca M}}
\newcommand{\MD}{\ca M^\blacklozenge}
\newcommand{\Md}{\ca M^\lozenge}
\newcommand{\DRL}{\operatorname{DRL}}
\newcommand{\DR}{\sf{DR}}
\newcommand{\DRC}{\operatorname{DRC}}
\newcommand{\isom}{\stackrel{\sim}{\longrightarrow}}
\newcommand{\Ann}[1]{\on{Ann}(#1)}
\newcommand{\fm}{\mathfrak m}
\newcommand{\Mdk}{\Mbar^{\m, 1/\k}}
\newcommand{\field}{K}
\newcommand{\Mdm}{\Mbar^\m}
\newcommand{\m}{{\bd m}}
\newcommand{\cat}[1]{\bd{#1}}
\newcommand{\M}{\mathsf{M}}
\newcommand{\ghost}{\overline{\mathsf{M}}}
\newcommand{\gp}{\mathsf{gp}}
\newcommand{\op}{\mathsf{op}}
\newcommand{\trop}{\mathsf{trop}}
\newcommand{\LOG}{\mathsf{log}}
\newcommand{\fib}{\mathsf{cat}}
\newcommand{\et}{\mathsf{\acute{e}t}}
\renewcommand{\sf}[1]{\mathsf{#1}}
\newcommand{\piR}{{\pi_R}}
\newcommand{\jrtC}{{\sqrt[j]{X}}}
\newcommand{\LOGPIC}{\mathfrak{LogPic}}
\newcommand{\Pic}{\on{Pic}}
\newcommand{\pic}{\on{Pic}}
\newcommand{\LogPic}{\on{LogPic}}
\newcommand{\TroPic}{\on{TroPic}}
\newcommand{\logPic}{\on{logPic}}
\newcommand{\troPic}{\on{troPic}}
\newcommand{\LogSch}{\cat{LogSch}}
\newcommand{\CFGlog}[1]{\Phi(#1)}
\newcommand{\minCFG}[1]{\myvec{#1}} 
\newcommand{\roots}[3][r]{#2(#3^{\frac{1}{#1}})}
\section{Introduction}
For a family of smooth proper curves $X/S$ and a positive integer $r$, the $r$-torsion of the relative jacobian, here denoted $J[r]$, is a finite flat cover of $S$, playing a central role everywhere from Galois representations in number theory to the double ramification cycle in Gromov-Witten theory. If $X$ is equipped with a line bundle $\ca L$ of relative degree divisible by $r$, then the $r$-th roots of $\ca L$, denoted\footnote{So in particular $\roots[r]{X}{\ca O} = J[r]$.} $\roots[r]{X}{\ca L}$, are a torsor under $J[r]$, and play a comparably important role, going under the name of \emph{(higher) spin structures} when $\ca L$ is a power of the relative canonical bundle of $X/S$. 
Our goal in this paper is to extend this picture to the case of \emph{prestable} curves; proper flat curves whose geometric fibres are reduced and connected and have at-worst nodal singularities. Such curves appear everywhere in Gromov-Witten theory, and restricting to curves with such mild singularities is for some applications harmless, by the semistable reduction theorem. 

It is not possible to extend $J[r]$ to a finite flat group scheme over $S$ in the prestable case; already when $r=2$ and $X/S$ is the universal elliptic curve over the $j$-line in characteristic zero, one can show that $J[2]$ does not admit any unramified extension over $S$. However, we will see that this ramification is the only obstruction. Our first main theorem states that, after passing to a certain root stack $\tilde S \to S$ which `eats up all the ramification', all of the good properties enjoyed by $J[r]$ and by $\roots[r]{X}{\ca L}$ extend to the prestable case:
\begin{theorem}\label{thm:intro_1}
There is a functorial extension of $J[r]$ to a finite flat group scheme over $\tilde S$, and functorial extensions of the $\roots[r]{X}{\ca L}$ to finite flat $J[r]$-torsors over $\tilde S$. 
\end{theorem}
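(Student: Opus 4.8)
The plan is to place both the torsion group and the root-torsors inside the logarithmic jacobian of Molcho--Wise, where the `missing' torsion is already present, and then to pin down how much root-stackery is needed to turn this log-geometric object into a classical finite flat one. Write $J = \Pic^0_{X/S}$, and recall the logarithmic jacobian $\LogPic_{X/S}$ --- a log smooth and log proper commutative group space over $S$ --- together with the tropicalisation exact sequence of Molcho--Wise
\begin{equation*}
0 \lra J \lra \LogPic_{X/S} \lra \TroPic_{X/S} \lra 0,
\end{equation*}
where (in relative degree zero) $\TroPic_{X/S}$ is the tropical jacobian assembled from the dual graphs of the fibres. Since $J$ is a smooth group scheme with semiabelian geometric fibres, it is divisible as an fppf sheaf, so multiplication by $r$ on $J$ is surjective with kernel $J[r]$; applying the snake lemma to multiplication by $r$ on the sequence above gives
\begin{equation*}
0 \lra J[r] \lra \LogPic_{X/S}[r] \lra \TroPic_{X/S}[r] \lra 0 .
\end{equation*}
Neither outer term is finite over $S$: over a boundary point whose dual graph has loop number $t$ and vertex-genus sum $g'$ (so arithmetic genus $g = g'+t$), the fibres of $J[r]$ and of $\TroPic_{X/S}[r]$ have ranks $r^{2g'+t}$ and $r^{t}$, which jump as $t$ varies over $S$. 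But $[r]$ is a log-finite-flat isogeny on $\LogPic_{X/S}$ of degree $r^{2g}$, so $\LogPic_{X/S}[r]$ is log finite flat over $S$ of constant degree $r^{2g}$ --- consistently with $r^{2g'+t}\cdot r^{t}=r^{2g}$ --- and the only thing separating it from a classical finite flat group scheme is its logarithmic structure, i.e.\ its monodromy.

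The heart of the matter is therefore a study of that monodromy. Over the locus where $X/S$ is smooth, $\TroPic_{X/S}[r]$ vanishes and $\LogPic_{X/S}[r] = J[r]$ is finite étale; near a boundary point, $\LogPic_{X/S}[r]$ is, étale-locally on $S$, an extension of the constant group $H_1(\Gamma,\bb Z/r)$ --- for $\Gamma$ the dual graph of the most degenerate fibre --- by $J[r]$, glued by a monodromy representation of the fundamental groups of the boundary strata. This monodromy is built from the Picard--Lefschetz / Dehn-twist operators around the boundary divisors; each such operator has the form $\mathrm{id}+N$ with $N$ nilpotent (a sum of commuting square-zero operators, one per vanishing cycle), so on the $r$-torsion it has order dividing $r$. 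Hence, if $\tilde S \to S$ is the $r$-th root stack of $S$ along the locus where $X/S$ degenerates, then --- since pulling back along the $r$-fold ramified cover $t_i \mapsto t_i^{r}$ replaces each monodromy operator by its $r$-th power, the identity on $r$-torsion --- the pullback of $\LogPic_{X/S}[r]$ to $\tilde S$ has trivial monodromy. I expect the main obstacle to be exactly this monodromy computation: verifying that the $r$-th root stack (or a canonical variant, with residue characteristics dividing $r$ needing extra care) suffices, which requires identifying the torsion of the tropical jacobian and the extension class of the displayed sequence precisely.

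Granting this, the pullback of the sequence above to $\tilde S$ is moreover \emph{strict} --- its logarithmic structure is pulled back from $\tilde S$ --- and a strict log-finite-flat morphism is finite and flat in the classical sense; so the pullback of $\LogPic_{X/S}[r]$ is an honest finite flat group scheme over $\tilde S$, the desired functorial extension of $J[r]$ (which we continue to denote $J[r]$). For the roots of $\ca L$: over the smooth locus the $r$-th roots $\roots[r]{X}{\ca L}$ form a torsor under $J[r]$, while the corresponding \emph{logarithmic} $r$-th roots of $\ca L$ --- the preimage of the class of $\ca L$ under $[r]$ on $\LogPic_{X/S}$ --- form a torsor under $\LogPic_{X/S}[r]$; its pullback to $\tilde S$ is, by the same strictness argument, a finite flat torsor under the extended $J[r]$ restricting to $\roots[r]{X}{\ca L}$ over the smooth locus. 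Functoriality is then formal, since $\LogPic$, the isogeny $[r]$, the snake sequence and the root-stack construction are all compatible with base change; the remaining bookkeeping --- that $\tilde S$ may be taken independently of $\ca L$, and the case of torsion classes of nonzero relative degree --- I would settle once the degree-zero picture is in hand.
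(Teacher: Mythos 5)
Your overall architecture matches the paper's: realise the sought extension of $J[r]$ as the $r$-torsion of the Molcho--Wise logarithmic jacobian via the sequence $0 \to \Pic^{\ul 0}_{X/S}[r] \to \LogPic_{X/S}[r] \to \TroPic_{X/S}[r] \to 0$, pass to a root stack $\tilde S$ over which everything becomes strict, and use that a strict log-finite-flat map is finite flat on underlying schemes. But the two steps that carry the actual content are not supplied, and one of them is set up incorrectly. First, your $\tilde S$ is not the paper's: you take the $r$-th root stack of $S$ along the entire degeneracy locus, which already fails the property announced in the introduction that $\tilde S \to S$ is an isomorphism over the compact-type locus (separating nodes contribute boundary divisors but no tropical torsion, so no root need be extracted there). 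The paper's $\tilde S$ (\ref{sec:root_stack}) is built from the intersection pairing on $H_1(\frak X)$, via the monoid $\ghost_{X/S,r}$ generated by the elements $(\gamma\cdot\gamma')/r$; as \ref{sec:examples} shows (e.g.\ \ref{eq:small_root_stack}, where one extracts a single root of the \emph{sum} of the node parameters around a cycle rather than a root of each), this is strictly milder than a divisor-by-divisor root stack --- that coarser construction is essentially Chiodo's, which the paper is explicitly improving on. A more ramified $\tilde S$ would still yield \emph{some} finite flat model, but not the stated theorem.

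Second, and more seriously, your justification that the pullback to $\tilde S$ is strict is a transcendental monodromy heuristic (Picard--Lefschetz operators $\mathrm{id}+N$, killed on $r$-torsion by $t\mapsto t^r$). This does not work in the paper's generality: over a base of positive or mixed characteristic dividing $r$, the group $J[r]$ is not \'etale and there is no monodromy representation to trivialise, yet the theorem still asserts finiteness and flatness; and even in characteristic zero, ``trivial monodromy'' is not the same assertion as ``the ghost sheaf is pulled back from the base''. The paper replaces this by a purely monoid-theoretic argument: $\TroPic_{X/S}[r]$ is identified with the root stack $\sqrt[j]{\ca H_1/r\ca H_1}$ for $j\colon \ghost \to \ghost_{\gamma/r}$ (\ref{lemma_minimal_monoid_integral}), flatness comes from integrality of $j$ (\ref{lem:inclusion_integral}) via Borne--Vistoli, and strictness over $\tilde S$ is the statement that $\ghost_{\gamma/r}\oplus_{\ghost_S}\ghost_{X/S,r} \to \ghost_{X/S,r}$ is an isomorphism of sharp fs monoids, checked by showing the kernel of the groupified map is torsion (\ref{lem:strict_after-root}). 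You flag this computation as ``the main obstacle'' but do not carry it out, and your framing of it as a monodromy statement would not lead to the right computation. Finally, you take for granted that $\LogPic_{X/S}[r]$ is representable by a log \emph{algebraic space} of degree $r^{2g}$; this requires Gillam's minimal-objects machinery and the non-trivial fact (\ref{prop:trivial_stabilisers}) that the $\mu_r$-stabilisers present in $\TroPic_{X/S}[r]$ are rigidified away in $\LogPic_{X/S}[r]$, which your proposal does not address.
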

The precise definition of the root stack $\tilde S \to S$ is little delicate, and can be found in \ref{sec:root_stack}; for now we mention that
\begin{enumerate}
\item
$\tilde S \to S$ is proper, flat and quasi-finite;
\item $\tilde S \to S$ is an isomorphism over the open locus in $S$ of smooth curves, and even the larger locus of compact-type curves. 
\end{enumerate}
\Cref{thm:intro_1} was known due to work of Chiodo \cite{Chiodo2008Stable-twisted-} under the restrictions that
\begin{enumerate}
\item
We work over a field in which $r$ is invertible;
\item We take $X/S$ to be the universal stable curve over a Deligne-Mumford-Knudsen moduli space $\Mbar_{g,n}$ of stable curves;
\item The line bundle $\ca L$ is a power of the relative dualising sheaf $\omega_{X/S}$;
\end{enumerate}
Moreover, we will see in \ref{sec:examples} that the root stacks Chiodo constructs are significantly more ramified than our $\tilde S \to S$ . 
The construction of Chiodo runs via line bundles on twisted curves (prestable curves equipped with some stacky structure at their nodes), following ideas also explored by Jarvis and Abramovich \cite{jarvis1998torsion,abramovich2003moduli}. In the present paper these are replaced by logarithmic curves in the sense of Kato \cite{Kato1989Logarithmic-str}. Our functorial extensions of $J[r]$ and of $\roots[r]{X}{\ca L}$ are described in terms of log line bundles on log curves, in the sense of Molcho and Wise \cite{Molcho2018The-logarithmic}; the present paper can be seen as a careful study of the torsion in their logarithmic and tropical jacobians. We can also state a logarithmic version of our main theorem, where no root stack $\tilde S \to S$ is required: 
\begin{theorem}\label{thm:intro_2}
There is a functorial extension of $J[r]$ to a finite flat logarithmic group scheme over $S$, and functorial extensions of the $\roots[r]{X}{\ca L}$ to finite flat logarithmic $J[r]$-torsors over $S$. 
\end{theorem}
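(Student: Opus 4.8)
The plan is to realize both objects as torsion in the logarithmic Jacobian of Molcho--Wise and to control that torsion by a snake-lemma reduction to the classical and tropical Jacobians. Let $X/S$ denote the associated logarithmic curve and let $\logPic^0(X/S)$ be its degree-zero logarithmic Picard group, which by \cite{Molcho2018The-logarithmic} is a log smooth, log proper, commutative logarithmic group algebraic space over $S$, functorial in the log curve, and equipped with a short exact sequence
\[
0 \lra \Pic^0(X/S) \lra \logPic^0(X/S) \lra \TroPic^0(X/S) \lra 0 ,
\]
with $\Pic^0(X/S)$ the (fibrewise semiabelian) Jacobian and $\TroPic^0(X/S)$ the tropical Jacobian, the latter built from the first homology lattices of the dual graphs together with the edge-length pairing valued in $\ghost_S^\gp$. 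I define the extension of $J[r]$ to be $\logPic^0(X/S)[r] := \ker\bigl([r]\colon \logPic^0(X/S)\to\logPic^0(X/S)\bigr)$, and, writing $d$ for the (multi)degree of $\ca L$ divided by $r$, I define the extension of $\roots[r]{X}{\ca L}$ to be the fibre of $[r]\colon \logPic^{d}(X/S) \to \logPic^{rd}(X/S)$ over the section classifying $\ca L$. Functoriality, and the fact that these recover the classical $J[r]$ and $\roots[r]{X}{\ca L}$ over the compact-type locus (where $\TroPic^0$ is trivial), are inherited from the corresponding properties of $\logPic$.

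Next I would apply the snake lemma to multiplication by $r$ on the displayed sequence. Since $\Pic^0(X/S)$ is fibrewise semiabelian, $[r]$ on it is quasi-finite, faithfully flat (miracle flatness between smooth $S$-spaces of equal relative dimension) and fppf-surjective, so $\Pic^0(X/S)/r\Pic^0(X/S) = 0$ as an fppf sheaf; the connecting map therefore vanishes and one obtains a short exact sequence
\[
0 \lra \Pic^0(X/S)[r] \lra \logPic^0(X/S)[r] \lra \TroPic^0(X/S)[r] \lra 0 .
\]
Separately I would record that $[r]$ is log faithfully flat and surjective on all of $\logPic^0(X/S)$ — on the tropical factor this surjectivity holds only after a Kummer log flat cover of $S$, since extracting $r$-th roots of the monoid elements entering the edge-length pairing is impossible Zariski- or fppf-locally — because this is what will make the putative root-bundle torsor nonempty and log-flat-locally trivial. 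Here $\Pic^0(X/S)[r]$ is quasi-finite and flat over $S$ but fails to be proper precisely on the locus where the toric rank jumps; the whole point is that $\TroPic^0(X/S)[r]$ compensates exactly this failure, in the logarithmic world.

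The main work, and what I expect to be the principal obstacle, is to prove that $\TroPic^0(X/S)[r]$ is a finite flat logarithmic group scheme over $S$ — this is the ``careful study of the torsion in the tropical jacobian'' promised in the abstract. Over a log point with sharp characteristic monoid $\overline{M}$ and dual graph $\Gamma$, one has $\TroPic^0 \cong \on{Hom}(H_1(\Gamma,\bb Z),\overline{M}^\gp)$ modulo the image of $H_1(\Gamma,\bb Z)$ under the edge-length pairing, and its $r$-torsion consists of the homomorphisms that become $r$-divisible modulo that image; concretely, producing such a class amounts to adjoining an $r$-th root of a monoid element appearing in the pairing matrix, which is a standard Kummer (root-stack) construction. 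I would therefore: (i) compute a local model, exhibiting $\TroPic^0(X/S)[r]$ étale-locally on $S$ as a fibre product of ``$r$-th root of a monoid element'' logarithmic group schemes, of log-flat order $r^{b_1(\Gamma)}$ on each stratum — this $r^{b_1}$ being exactly the defect missing from $\Pic^0[r]$; (ii) conclude that $\TroPic^0(X/S)[r]$ is log flat and quasi-finite, and classically finite étale once $r$ is invertible on $S$; (iii) conclude log properness, e.g. because $\TroPic^0(X/S)[r]$ is the fibre over the identity section of the quasi-finite morphism $[r]\colon \TroPic^0 \to \TroPic^0$ between log proper $S$-spaces. It is precisely the non-local-constancy of this torsion that forces the root stack $\tilde S$ in \ref{thm:intro_1}; pulling back along the Kummer cover $\tilde S \to S$ ``de-logs'' it into a classically finite flat group scheme, reconciling the two theorems.

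Finally I would assemble the pieces. The extension $\logPic^0(X/S)[r]$ of the finite flat log group scheme $\TroPic^0(X/S)[r]$ by the quasi-finite flat $\Pic^0(X/S)[r]$ is log flat and quasi-finite over $S$; it is log proper because it is the fibre over the identity section of $[r]\colon \logPic^0 \to \logPic^0$, a quasi-finite morphism between the log proper $S$-spaces $\logPic^0$, and because the identity section is a closed immersion, so $\logPic^0(X/S)[r]$ is a closed log subspace of the log proper $\logPic^0(X/S)$; log proper together with quasi-finite and log flat is what it means to be a finite flat logarithmic group scheme. For the root bundles, the fibre defining $\roots[r]{X}{\ca L}$ is by construction a pseudo-torsor under $\logPic^0(X/S)[r] = J[r]$, and it is a torsor — nonempty, hence log-flat-locally trivial — because $[r]$ is log faithfully flat and surjective on $\logPic$; being log-flat-locally isomorphic to the finite flat $J[r]$ it is itself finite flat over $S$, and functoriality in $(X/S,\ca L)$ is inherited from $\logPic$. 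The delicate points I would flag are the precise meaning of ``finite flat logarithmic group scheme'' adopted here — representable by a log scheme that is log finite and log flat, equivalently classically finite flat after a Kummer log flat base change — and its compatibility with the Molcho--Wise representability results, together with the claim that $[r]$ really is log faithfully flat and surjective on $\logPic^0$, on which both the short exactness of the torsion sequence and the nonemptiness of the root torsor rest.
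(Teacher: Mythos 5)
Your overall architecture does track the paper's: both define the extensions as $\LogPic_{X/S}[r]$ and as the fibre of $[r]$ over the section classifying $\ca L$, both push the sequence $0 \to \Pic^{\ul 0}_{X/S} \to \LogPic_{X/S} \to \TroPic_{X/S} \to 0$ through multiplication by $r$ using divisibility of $\Pic^{\ul 0}_{X/S}$, and both reduce the substance to the tropical torsion, which is indeed a Kummer-type construction. But there are two genuine gaps. The most serious is that you take as input that $\LogPic^0_{X/S}$ is a ``logarithmic group algebraic space''; it is not. The Molcho--Wise $\LogPic$ is only a sheaf on $\cat{LogSch}_S$ admitting a log \'etale cover by log schemes --- it is \emph{not} representable by an algebraic stack with log structure --- so the assertion that its $r$-torsion is a finite flat logarithmic group \emph{scheme} is first and foremost a representability claim, and proving it is the heart of the paper. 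The paper does this via Gillam's minimal objects: it embeds $\TroPic[r]$ into the \'etale $S$-space $\ca H_1/r\ca H_1$ (using torsion-freeness of $\sheafhom(\ca H_1,\bb G_m^\trop)^\dagger$), identifies it in \ref{lemma_minimal_monoid_integral} as the root stack for an explicit monoid extension $\ghost \to \ghost_{\gamma/r}$ built from the whole intersection pairing (note: a single saturation involving the elements $\gamma\cdot\gamma'$, not a fibre product of rank-one root extractions --- see the cyclic-graph example in \ref{sec:examples}, where only one root of the total cycle length is taken), and then proves in \ref{prop:trivial_stabilisers} that the minimal objects of $\LogPic_{X/S}[r]$ form a category fibred in \emph{setoids}, i.e.\ that the log line bundle data rigidifies away the $\mu_r$-inertia which the tropical root stack genuinely carries. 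Without some version of this rigidification your $\LogPic^0_{X/S}[r]$ is at best an algebraic stack with log structure, not a group space, and the word ``scheme'' in the statement is not earned.

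The second gap is flatness. Your sketch yields only \emph{log} flatness of the tropical torsion, whereas the theorem (and \ref{cor:LogPicr}) asserts flatness of the underlying map; log flat does not imply flat. The paper obtains honest flatness from the fact that the inclusion $\ghost \to \ghost_{\gamma/r}$ is an \emph{integral} morphism of monoids (\ref{lem:inclusion_integral}), which via \cite[Prop.~4.6.7]{Ogus2018Lectures-on-log} and \cite[Prop.~4.13]{Borne2012Parabolic-sheav} makes the chart $\Spec\bb Z[Q]\to\Spec\bb Z[P]$ flat. That integrality computation is a necessary ingredient your argument does not supply. A smaller inaccuracy: your claim that $\TroPic^0_{X/S}[r]$ is ``classically finite \'etale once $r$ is invertible'' is wrong --- it is only log \'etale (root stacks are not \'etale over their base), and this failure of the underlying map to behave classically is exactly what necessitates the root stack $\tilde S$ in \ref{thm:intro_1}.
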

The fact that the root stack $\tilde S \to S$ is necessary for the non-logarithmic statement comes from the fact that the underlying scheme of a logarithmic group scheme need not carry a group structure, see \ref{sec:log_version} for a more extended discussion of this point. 

\begin{remark}
Work of Caporaso, Casagrande, and Cornalba \cite{CaporasoModuli-of-roots} constructed compactifications of moduli spaces of roots under less restrictive conditions than Chiodo and Jarvis (for example, allowing arbitrary prestable families $X/S$ as we do), but they do not give their compactifications group or torsor structures. 
\end{remark}

The universal curve over the stack $\roots[r]{X}{\ca L}$ carries (locally) a logarithmic line bundle $\ca F$ such that $\ca F^{\otimes r} \cong \ca L$. The locus in $\roots[r]{X}{\ca L}$ over which $\ca F$ is fibrewise (logarithmically) trivial carries a natural virtual fundamental class, the \emph{$r$-spin double ramification cycle $\on{DR}(\ca L^{1/r})$ of $\ca L$}. Our final result is an expression for $\on{DR}(\ca L^{1/r})$ in terms of piecewise polynomial functions on the tropicalisation of $\roots[r]{X}{\ca L}$, and may be viewed as a continuation of the story told by \cite{Janda2016Double-ramifica,Janda2018Double-ramifica,Bae2020Pixtons-formula,Molcho2021The-Hodge-bundl,Holmes2021Logarithmic-int}. The full expression is lengthy to write out (see \ref{sec:DR_appendix}), but as an immediate corollary we obtain that $\on{DR}(\ca L^{1/r})$ is `tautological' in the following sense. 
\begin{theorem}
The class $\on{DR}(\ca L^{1/r})$ in $\on{CH}(\roots[r]{X}{\ca L})$ lies in the subring generated by the images of piecewise polynomials on the tropicalisation of $\roots[r]{X}{\ca L}$, together with the class $\eta_\ca L = \pi_*(c_1(\ca L)^2)$. 
\end{theorem}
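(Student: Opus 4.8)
The plan is to deduce the theorem from the explicit formula for $\on{DR}(\ca L^{1/r})$ established in \ref{sec:DR_appendix}; granting that formula, the statement follows by inspecting which classes can occur in it, and so the substantive work is the derivation of the formula, which the preceding sections are building towards. The starting point is to realise $\on{DR}(\ca L^{1/r})$ as a pullback of a unit section: over $\roots[r]{X}{\ca L}$ the universal log curve $\ca C$ carries the universal log line bundle $\ca F$ with $\ca F^{\otimes r}\isom\ca L$, and in the degree-$0$ normalisation in which the class is defined $\ca F$ determines a section $s_{\ca F}\colon\roots[r]{X}{\ca L}\to\LogPic^{0}$ of the logarithmic Jacobian of $\ca C/\roots[r]{X}{\ca L}$, so that $\on{DR}(\ca L^{1/r})$ is by construction the refined Gysin pullback of the identity section of $\LogPic^{0}$ along $s_{\ca F}$. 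The point of the root stack $\tilde S\to S$ and of the analysis of $r$-torsion in $\TroPic$ and $\LogPic$ is precisely that $\roots[r]{X}{\ca L}$ is then a well-behaved (finite flat) moduli space on which $s_{\ca F}$ makes sense; in particular the tropicalisation of $\ca F$ is exactly $\tfrac1r$ times that of $\ca L$, a piecewise linear function on $\trop\!\big(\roots[r]{X}{\ca L}\big)$ which pins down the strata that contribute.

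Next I would compute this pullback using the piecewise-polynomial calculus of \cite{Molcho2021The-Hodge-bundl, Holmes2021Logarithmic-int}: after a logarithmic modification of $\roots[r]{X}{\ca L}$ the section $s_{\ca F}$ meets the identity section along (closures of) strata, and the excess contribution is read off from normal-bundle data, exactly as for the ordinary double ramification cycle. Organising the resulting expression, each term is a product of two kinds of class: classes pulled back from the tropicalisation — boundary strata, edge-length parameters, and the combinatorial weights coming from $\trop(\ca F)$ — which by definition are images of piecewise polynomials; and fibre integrals $\pi_*$ of monomials in $c_1(\ca L)$, which by the degree-$0$ normalisation and the relative dimension $1$ are polynomials in $\eta_\ca L = \pi_*(c_1(\ca L)^2)$ (this single class subsumes the $\kappa_1$- and weighted-$\psi$-type contributions, since for $\ca L$ a twist of a power of $\omega$ by the markings $\pi_*(c_1(\ca L)^2)$ is exactly the relevant combination of $\kappa_1$ and $\psi_i$). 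Since images of piecewise polynomials are closed under products, the product of any such pair lies in the subring generated by piecewise-polynomial images together with $\eta_\ca L$, and summing the finitely many terms of the formula proves the theorem.

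The main obstacle is the derivation of the formula, and within it the verification that the answer is genuinely a piecewise polynomial: the class produced on the logarithmic modification must descend to $\roots[r]{X}{\ca L}$ and be independent of the chosen modification. This is where the combinatorics of $r$-torsion in $\TroPic$ — and the compatibility of the $r$-th root structure with the monodromy absorbed by $\tilde S\to S$ — enters essentially, both to define $s_{\ca F}$ globally and to control how its excess contribution transforms under blowups. The second delicate point is the excess-intersection bookkeeping for $s_{\ca F}$: one must check that after modification it meets the identity section with the expected local structure, so that the contribution is governed by the normal bundles of strata rather than by genuinely new geometry of the root torsors. Once these are settled, the tautological conclusion above is immediate.
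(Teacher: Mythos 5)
Your top-level logic is the same as the paper's: the theorem is an immediate corollary of an explicit formula for $\DR^{\frac{1}{r}}(\ca L)$ (the paper's \ref{thm:DR_formula_main}), whose terms are visibly polynomials in $\eta_{\ca L}$ with coefficients in the image of piecewise polynomials. Where you diverge is in how the formula is obtained. You propose a direct excess-intersection computation of the Gysin pullback of the unit section of $\LogPic^0$ along $\phi_{\ca F}$ on a logarithmic modification of $\roots[r]{X}{\ca L}$ --- in effect re-running the whole normal-bundle/strata analysis of the $r=1$ case in the spin setting. The paper instead \emph{reduces} to the known $r=1$ formula of \cite{Bae2020Pixtons-formula}: after passing to a proper surjective cover on which canonical coset representatives $D$ and piecewise linear functions $\alpha_D$ can be chosen, one subdivides each edge of the universal curve into a chain of $r$ pieces so that the log line bundle $\ca F$ with $\ca F^{\otimes r}\isom\ca L(\alpha_D)$ is realised as an honest line bundle; since logarithmic triviality is unchanged by subdivision (\cite[Theorem 4.4.1]{Molcho2018The-logarithmic}), $\DR^{\frac{1}{r}}(\ca L)$ is literally the ordinary DR cycle of $\ca F$, and the known formula applies verbatim. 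The price of the paper's route is a series of combinatorial well-definedness checks (normalising $\alpha_D$ to vanish on the old vertices, and independence of the piecewise polynomial $P_d$ under change of representative $D\mapsto D+\nabla\beta$); the price of yours is that the ``main obstacle'' you name is the entire spin analogue of Pixton's formula, which your proposal does not actually carry out and which is far heavier than the reduction. Two smaller points: the class that naturally appears is $\eta_{\ca F}=\frac{1}{r^2}\eta_{\ca L(\alpha_D)}$ rather than $\eta_{\ca L}$ itself, and one needs the observation that $\eta_{\ca L(\alpha_D)}-\eta_{\ca L}=\pi_*(c_1(\ca O(\alpha_D))^2)$ is itself the image of a piecewise polynomial (this uses the normalisation $\alpha_D|_V=0$, which kills the cross term $c_1(\ca L)\cdot c_1(\ca O(\alpha_D))$); and your assertion that fibre integrals of monomials in $c_1(\ca L)$ are polynomials in $\eta_{\ca L}$ is only true for the specific class $\pi_*(c_1^2)$ occurring in the formula, not for higher monomials in general.
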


\subsection{Strategy of proof}
We first prove \ref{thm:intro_2}; we construct our compactifications of $J[r]$ and of $\roots[r]{X}{\ca L}$ from the torsion in the logarithmic jacobian. The logarithmic jacobian is not a log scheme, but is rather a `logarithmic space'; a sheaf on the category of log schemes admitting a log \'etale cover by a log scheme. Proving that the torsion is in fact a logarithmic scheme goes via a careful analysis of the minimal objects (in the sense of Gillam \cite{Gillam2012Logarithmic-sta}). 

To derive \ref{thm:intro_1} we first need to build the root stack $\tilde S \to S$; this is done by an explicit construction in \ref{sec:root_stack}. We then prove that $J[r]$ and $\roots[r]{X}{\ca L}$ become strict after base-change to $\tilde S$, from which the theorem easily follows. 

Finally, to give a formula for $\on{DR}(\ca L^{1/r})$ we start by giving an explicit description of the cones of $\roots[r]{X}{\ca L}$ on which we will write our piecewise-polynomials. We then realise the log line bundle $\ca F$ as a line bundle on a suitable subdivision of the universal curve over $\roots[r]{X}{\ca L}$, allowing us to apply the main theorem of \cite{Bae2020Pixtons-formula} to derive our formula.


\subsection{Acknowledgements}
D.H. was supported by NWO grants VI.Vidi.193.006 and 613.009.103. G.O. was supported by the EPFL Chair of Arithmetic Geometry (ARG). We are grateful to Georgios Politopoulos for pointing out some small errors in the formulae in \ref{sec:DR_appendix}. 

\subsubsection*{Attribution}
G.O. wishes to acknowledge that \cref{sec:DR_appendix} was written by D.H. 

\section{Background}
\subsection{Log structures}

We write $\M^\times$ for the units of a (commutative) monoid $\M$, and $\M^\gp$ for its groupification. 
Following \cite{Kato1989Logarithmic-str}, a \emph{log structure} on an algebraic space $X$ consists of an \'etale sheaf of monoids $\M_X$ on $X$ together with a map of sheaves $\alpha\colon \M_X \to \ca O_X$, such that $\alpha\colon \alpha^{-1}\ca O_X^\times \to \ca O_X^\times$ is an isomorphism; we refer to \cite{Kato1989Logarithmic-str} and \cite{Ogus2018Lectures-on-log} for further details. The \emph{ghost sheaf} is $\ghost_X \coloneqq \M_X/\M_X^\times$. We write $\ul X$ for the underlying algebraic space of a log algebraic space $X$, and $X = (\ul X, \M_X)$. We work throughout with fine saturated (fs) log structures. The category of (fs) log schemes is denoted $\cat{LogSch}$, and comes with a forgetful functor to the category $\cat{Sch}$ of schemes. The wide subcategory of $\cat{LogSch}$ with only strict morphisms is denoted $\cat{Log}$; the corresponding functor $\cat{Log} \to \cat{Sch}$ is a groupoid fibration (see \cite{Olsson2001Log-algebraic-s}). 

\subsection{Minimal log structures}
A log scheme $X$ induces a Yoneda functor $h_X\colon \cat{LogSch}^\op \to \cat{Set}$, but recovering the functor $h_{\ul X} \colon \cat{Sch}^\op \to \cat{Set}$ from $h_X$ is a somewhat delicate problem, which has been completely solved by Gillam \cite{Gillam2012Logarithmic-sta} in a more general context. This will play a central role in what follows, so we briefly summarise Gillam's results, as well as adding a couple of lemmas. 

We write $\cat{CFG}_{\cat{LogSch}}$ for the 2-category of CFGs (categories fibred in groupoids) over $\cat{LogSch}$. If $X \to \cat{Sch}$ is a CFG then a \emph{log structure} on $X$ is a factorisation $X \to \cat{Log}\to \cat{Sch}$. 
We write $\cat{LogCFG}_{\cat{Sch}}$ for the 2-category whose objects are CFGs over $\cat{Sch}$ equipped with a log structure. 

 We write 
\begin{equation}
\Phi\colon \cat{LogCFG}_{\cat{Sch}} \to \cat{CFG}_{\cat{LogSch}}
\end{equation}
for the natural 2-functor; this is analogous to taking a log scheme $X$ and looking at the resulting Yoneda functor $h_X\colon \cat{LogSch}^\op \to \cat{Set}$. 
Gillam \cite{Gillam2012Logarithmic-sta} proves that this 2-functor is fully faithful, and gives a precise characterisation of the essential image. Starting with an object $X$ of $\cat{CFG}_{\cat{LogSch}}$, he defines a subset of the objects of $X$ which he calls the \emph{minimal} objects; we write $\minCFG{X}$ for the full subcategory of $X$ whose objects are minimal. Gillam shows that the CFG $X$ lies in the essential image of $\Phi$ if and only if $X$ has `enough' minimal objects (in a precise sense) and that, in that case, the fibred category $\minCFG{X} \to \cat{CFG}_{\cat{LogSch}} \to \cat{CFG}_{\cat{Sch}}$ (where the latter morphism forgets the log structure) has the property that $\Phi(\minCFG{X}) = X$. 
%
%
%
%
%
%
%
%
\begin{example}\label{eg:P1modGm}
Let $X$ be the category fibred in setoids over $\cat{LogSch}$ whose objects over a log scheme $T$ are elements $m \in \ghost_T^\gp$ such that either $m$ or $-m$ lies in $\ghost_T$. This lies in the essential image of $\Phi$. More precisely, equip $\bb P^1$ with its standard toric log structure, and write $[\bb P^1/\bb G_m]$ for the quotient by the scaling action. This is a logarithmic algebraic stack, in particular an object of $\cat{LogCFG}_{\cat{Sch}}$, and $\Phi([\bb P^1/\bb G_m]) = X$. Here we see how taking the preimage under $\Phi$ can create extra inertia; essentially the log structure rigidifies away some of that inertia. This kind of phenomenon will play an important role in what follows, especially in \ref{sec:underlying_stack_of_LogPic_torsion}. 
\end{example}
Let $\phi\colon X \to Y$ be a 1-morphism in $\cat{CFG}_\cat{LogSch}$, and suppose that $\phi$ is representable by strict objects in $\cat{LogCFG}_{\cat{Sch}}$; by this we mean that for every $T$ in $\cat{LogCFG}_{\cat{Sch}}$ and 1-morphism $\Phi(T) \to Y$, the fibre product $X \times_Y \Phi(T)$ lies in the essential image of $\Phi$, and moreover that the morphism $X \times_Y \Phi(T) \to \Phi(T)$ is strict. Suppose in addition that the relative inertia of $\phi$ is trivial, so that $X \times_Y \Phi(T)$ is in fact a category fibred in setoids over $\cat{Sch}$. 
%
%
%
%
%
%
One deduces quite easily from Gillam's machinery the next two lemmas. 
\begin{lemma}\label{lem:strict_preserves_minimal_objects}
The minimal objects of $X$ are exactly those objects which map to minimal objects of $Y$.
\end{lemma}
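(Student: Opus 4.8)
The plan is to argue entirely inside Gillam's theory of minimal objects. Recall that every object $\xi$ of a CFG over $\cat{LogSch}$, lying over a log scheme $T$, has a \emph{minimal replacement}: a cartesian morphism $\xi\to\xi^{\min}$ over a morphism $g\colon T\to T_0$ which is an isomorphism on underlying schemes, with $\xi^{\min}$ minimal, unique up to unique isomorphism; $\xi$ itself is minimal precisely when $g$ is an isomorphism, minimal replacements commute with pullback along morphisms which are isomorphisms on underlying schemes, and their formation is functorial in the CFG. I would also record two easy observations: (a) for a log scheme $T$, the tautological object $\mathrm{id}_T$ of $\Phi(T)$ over $T$ is minimal, and an object $\eta$ of $Y$ over $T$ corresponds to a morphism $\Phi(T)\to Y$ carrying $\mathrm{id}_T$ to $\eta$; and (b) if $p\colon\Phi(\ca Z)\to\Phi(\ca W)$ comes from a \emph{strict} morphism $\ca Z\to\ca W$ in $\cat{LogCFG}_{\cat{Sch}}$, then an object of $\Phi(\ca Z)$ is minimal if and only if its image in $\Phi(\ca W)$ is --- minimality only involves the comparison map from the intrinsic ghost sheaf to the ambient one, which strictness identifies before and after applying $p$. (In particular (b) already gives the lemma when $\phi$ is strict.)

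The heart of the matter is a lifting construction. Fix $\xi\in X(T)$ with image $\eta=\phi(\xi)\in Y(T)$, and let $(\eta^{\min},T_0,g\colon T\to T_0)$ be the minimal replacement of $\eta$. The object $\eta^{\min}$ induces a morphism $\Phi(T_0)\to Y$, and by hypothesis $X_0\coloneqq X\times_Y\Phi(T_0)$ lies in the essential image of $\Phi$, is fibred in setoids over $\cat{Sch}$, and the projection $X_0\to\Phi(T_0)$ is strict; write $X_0=\Phi(\ca Z)$. The pair given by $\xi$ and the isomorphism $g^*\eta^{\min}\cong\phi(\xi)$ determines a lift $\tilde\xi\in X_0(T)$ lying over $g$. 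Strictness of $X_0\to\Phi(T_0)$ means the intrinsic log structure of $\tilde\xi$ is the pullback, along the underlying-scheme isomorphism of $g$, of the log structure of $T_0$; transporting $\tilde\xi$ along that isomorphism thus produces a minimal object $\tilde\xi^\sharp\in X_0(T_0)$ with image $\mathrm{id}_{T_0}$ in $\Phi(T_0)$, together with a cartesian morphism $\tilde\xi\to\tilde\xi^\sharp$ over $g$. Projecting along $X_0\to X$ yields $\xi^\sharp\in X(T_0)$ with $\phi(\xi^\sharp)\cong\eta^{\min}$ and a cartesian morphism $\xi\to\xi^\sharp$ over $g$: a lift, compatible with $\xi$, of the minimal replacement of $\eta$ along $\phi$.

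The two implications then follow formally. Suppose $\xi$ is minimal. Since $\xi\to\xi^\sharp$ is cartesian over the underlying-scheme isomorphism $g$, minimality of $\xi$ (i.e.\ that $\mathrm{id}\colon\xi\to\xi$ is \emph{the} minimal replacement of $\xi$) produces a morphism $\psi\colon T_0\to T$ with $\psi g=\mathrm{id}_T$ and a compatible morphism $\xi^\sharp\to\xi$ over $\psi$; applying $\phi$ gives $\psi^*\eta\cong\phi(\xi^\sharp)\cong\eta^{\min}$, so --- $\psi$ being an isomorphism on underlying schemes along which $\eta^{\min}$ is a pullback --- minimality of $\eta^{\min}$ supplies $\rho\colon T\to T_0$ with $\rho\psi=\mathrm{id}_{T_0}$. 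As $\psi$ has a right inverse $g$ and a left inverse $\rho$, it is an isomorphism; hence so is $g$, and $\eta$ is minimal. Conversely, suppose $\eta=\phi(\xi)$ is minimal, and let $(\xi^{\min},T^{\min}_\xi,g_\xi)$ be the minimal replacement of $\xi$. By the implication just proved, $\phi(\xi^{\min})$ is minimal in $Y$; and since $\phi$ commutes with pullback, $\eta\cong g_\xi^*\phi(\xi^{\min})$, so $\eta$ is a pullback of the minimal object $\phi(\xi^{\min})$ along the underlying-scheme isomorphism $g_\xi$. Running the same right-inverse/left-inverse argument, now using minimality of $\eta$, forces $g_\xi$ to be an isomorphism, i.e.\ $\xi$ is minimal.

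The single non-formal step is the lifting construction of the second paragraph, and this is exactly where the hypotheses enter. Strictness of $X_0\to\Phi(T_0)$ is what identifies the intrinsic log structure of $\tilde\xi$ with the pullback of that of $T_0$, so that the minimal object $\tilde\xi^\sharp$ over $T_0$ exists at all, and triviality of the relative inertia is what makes $\tilde\xi^\sharp$, and hence $\xi^\sharp$, canonically defined. Everything else is bookkeeping with minimal replacements, the compatibility of $\Phi$ with fibre products, and the universal property defining minimal objects.
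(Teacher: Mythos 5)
The paper offers no written proof of this lemma (it is stated as something ``one deduces quite easily from Gillam's machinery''), so there is nothing to compare against line by line; your route --- lifting the minimal replacement of $\eta=\phi(\xi)$ through the strict fibre product $X_0=X\times_Y\Phi(T_0)$ and then arguing formally with universal properties --- is certainly the natural one, and the forward implication is correct as far as it goes. However, there is a genuine gap in the foundations of both halves: you assert at the outset that \emph{every} object of \emph{every} CFG over $\cat{LogSch}$ admits a minimal replacement. This is false; having enough minimal objects is precisely Gillam's characterisation of the essential image of $\Phi$, and neither $X$ nor $Y$ is assumed to lie in that image in the hypotheses of \ref{lem:strict_preserves_minimal_objects} (that assumption on $Y$ is only introduced in the \emph{next} lemma). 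Concretely: your forward direction needs $\eta^{\min}$ to exist in $Y$, and your converse needs $\xi^{\min}$ to exist in $X$. The second is the more serious problem, because the existence of minimal replacements in $X$ is essentially the content of \ref{strict_over_minCFG_has_minCFG}, whose proof one would naturally base on the present lemma; as written, your converse is circular.

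The repair is available inside your own construction. Granting that $Y$ has enough minimal objects (true in every application in the paper, and best added as a running hypothesis), your lifting step produces $\xi\to\xi^\sharp$ over $g\colon T\to T_0$ with $\phi(\xi^\sharp)\cong\eta^{\min}$; what is missing is the verification that $\xi^\sharp$ is \emph{minimal in $X$}. You only know that $\tilde\xi^\sharp$ is minimal in $X_0=\Phi(\ca Z)$, and the projection $X_0\to X$ is not of the strict type covered by your observation (b), so minimality does not transfer for free. This is where the trivial relative inertia genuinely enters: given a test diagram $\xi''\to\xi^\sharp$, $\xi''\to\xi'$ for minimality of $\xi^\sharp$ in $X$, one first uses minimality of $\eta^{\min}$ in $Y$ to produce the required morphism on images, then lifts the whole diagram canonically to $X_0$ (canonically because the fibres are setoids), and finally invokes minimality of $\tilde\xi^\sharp$ there. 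Once $\xi^\sharp$ is known to be minimal, $\xi\to\xi^\sharp$ \emph{is} the minimal replacement of $\xi$ in $X$; both directions of the lemma then reduce to your observation that $g$ is an isomorphism if and only if $\eta$ is minimal, \ref{strict_over_minCFG_has_minCFG} is obtained at the same time, and the circularity disappears.
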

Note that strictness is needed for both inclusions of minimal objects, even when $X$ and $Y$ are taken to be log schemes. To see this consider the (non-strict) morphism 
\begin{equation}
\phi\colon \Spec \bb Z[\bb N^2] \to \Spec \bb Z[\bb N] ; (1,1) \mapsfrom 1. 
\end{equation}
Here the identity on $\Spec \bb Z[\bb N^2]$ is minimal but does not map to a minimal object, and the object
\begin{equation}
\Spec \bb Z[\bb N] \to \Spec \bb Z[\bb N^2] ; a \mapsfrom (a,b)
\end{equation}
is not minimal but is mapped to a minimal object. 
\begin{lemma}\label{strict_over_minCFG_has_minCFG}
Suppose in addition that $Y$ lies in the essential image of $\Phi$. Then $X$ lies in the essential image of $\Phi$. 
\end{lemma}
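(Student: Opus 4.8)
The plan is to verify Gillam's criterion recalled above: $X$ lies in the essential image of $\Phi$ precisely when it has enough minimal objects, so it suffices to show that every object $\xi$ of $X$ over a log scheme $T$ admits the universal morphism $\xi \to \xi^{\min}$ to a minimal object of $X$ (lying over the same underlying scheme $\ul T$), and that the minimal objects of $X$ are stable under pullback along morphisms of $\cat{Sch}$. Since $Y$ already enjoys all of this, the whole game is to transport it across $\phi$, keeping control via \ref{lem:strict_preserves_minimal_objects}, which identifies the minimal objects of $X$ with those mapping to minimal objects of $Y$.

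Concretely, fix $\xi$ over $T$ and put $\eta = \phi(\xi)$. Since $Y$ lies in the essential image of $\Phi$, there is a minimal object $\eta^{\min}$ of $Y$ over a log scheme $T^{\min}$ with $\ul{T^{\min}} = \ul T$, together with the universal morphism $\eta \to \eta^{\min}$ lying over the canonical $T \to T^{\min}$; in particular $\eta$ is the pullback of $\eta^{\min}$ along $T \to T^{\min}$. By Yoneda, $\eta^{\min}$ is a morphism $\Phi(T^{\min}) \to Y$ (here $T^{\min}$, being a log scheme, is an object of $\cat{LogCFG}_{\cat{Sch}}$), so we may form $Z \coloneqq X \times_Y \Phi(T^{\min})$. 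By the hypothesis that $\phi$ is representable by strict objects, $Z$ lies in the essential image of $\Phi$ and the projection $q\colon Z \to \Phi(T^{\min})$ is strict; by triviality of the relative inertia of $\phi$, $Z$ is moreover fibred in setoids over $\cat{Sch}$. Write $Z = \Phi(\underline Z)$ with $\underline Z$ a category fibred in setoids over $\cat{Sch}$, equipped (via $q$ and its strictness) with a morphism $\underline Z \to \ul{T^{\min}}$ along which the log structure of $\underline Z$ is pulled back from $\M_{T^{\min}}$. Now $\xi$, the morphism $T \to T^{\min}$, and the identification $\eta \cong (T\to T^{\min})^{*}\eta^{\min}$ together constitute an object of $Z$ over $T$, equivalently an object of $\underline Z$ over $\ul T$; since its image under $q$ lies over $\on{id}_{\ul T}$, this is a section of $\underline Z \to \ul{T^{\min}}$. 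Reread as an object of $Z$ over the log scheme $T^{\min}$, it maps to $\on{id}_{T^{\min}}$ under $q$ and receives a canonical morphism from our object over $T$ (lying over $T \to T^{\min}$); pushing forward along $Z \to X$ produces an object $\xi^{\min}$ of $X$ over $T^{\min}$ together with a morphism $\xi \to \xi^{\min}$.

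It then remains to check three points. First, the image of $\xi^{\min}$ in $Y$ equals $\eta^{\min}(\on{id}_{T^{\min}}) = \eta^{\min}$, which is minimal, so $\xi^{\min}$ is minimal in $X$ by \ref{lem:strict_preserves_minimal_objects}. Second, $\xi \to \xi^{\min}$ is the universal morphism from $\xi$ to a minimal object of $X$; this follows by transporting the universal property of $\eta \to \eta^{\min}$ through $\phi$, once again using \ref{lem:strict_preserves_minimal_objects} together with strictness of $q$. Third, running the same construction over an arbitrary morphism $\ul S \to \ul{T^{\min}}$ of $\cat{Sch}$ — and using that $\underline Z$ is already a category fibred over $\cat{Sch}$ and that minimal objects of $Y$ are stable under pullback — shows the minimal objects of $X$ are stable under pullback in $\cat{Sch}$, completing the verification of Gillam's criterion.

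I expect the main obstacle to be the second point: one must rule out $X$ acquiring ``extra'' minimal log structures beyond those visible in $Y$, i.e. show that the minimal log structure attached to $\xi$ in $X$ is already the minimal log structure attached to $\eta = \phi(\xi)$ in $Y$, namely $\M_{T^{\min}}$. This is exactly where strictness of $q$ is indispensable: strictness forces the log structure of any object of $X$ lying over a given object of $Y$ to be the pullback of the latter's, so that no new log data is introduced by passing through $\phi$. Without strictness even the construction of $\xi^{\min}$ collapses, as the example following \ref{lem:strict_preserves_minimal_objects} shows.
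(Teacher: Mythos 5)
The paper offers no written proof of this lemma (it is one of the two statements introduced by ``One deduces quite easily from Gillam's machinery''), so there is nothing to compare line-by-line; judged on its own, your argument is correct and is the natural way to fill in the omitted details. The essential move --- forming $Z = X \times_Y \Phi(T^{\min})$ along the minimalization $\eta^{\min}$ of $\eta = \phi(\xi)$, and using strictness of $q\colon Z \to \Phi(T^{\min})$ to see that the log structure which $\underline{Z}$ attaches to the tautological object over $\ul T$ is exactly $\M_{T^{\min}}$, so that the $Z$-minimalization of $(\xi, T\to T^{\min})$ lives over $T^{\min}$ itself --- is precisely where both standing hypotheses (representability by strict objects, plus $Y$ in the essential image) get used, and \ref{lem:strict_preserves_minimal_objects} correctly certifies that the resulting $\xi^{\min}$ is minimal in $X$. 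Two small remarks. First, your ``second point'' (the universal property of $\xi \to \xi^{\min}$) is lighter than you fear: in Gillam's formalism minimality is itself defined by a lifting property, so once you have produced \emph{some} morphism from $\xi$ to a minimal object lying over the identity of $\ul T$, initiality among such morphisms is automatic; you do not need to re-derive it from the universal property of $\eta \to \eta^{\min}$. Second, in the stability-under-pullback step it is worth saying explicitly that the pullback of $\xi^{\min}$ in $\underline Z$ (fibred over $\cat{Sch}_{\ul{T^{\min}}}$) maps to the pullback of $\eta^{\min}$ in $\minCFG{Y}$, which is minimal because $Y$ is in the essential image of $\Phi$, and then \ref{lem:strict_preserves_minimal_objects} applies once more; as written this compatibility is asserted rather than checked, but it is immediate from the construction of $Z$ as a fibre product.
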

\subsection{Root stacks}

Logarithmic geometry gives a powerful perspective on root stacks, as we now describe. The relation to classical root stacks is explained in \ref{eg:root_stack}. 
For our input data, let $X$ be a log scheme, and let $j\colon \ghost_X \to R$ a morphism of sheaves of monoids on $X$ such that the saturation of the image of $j$ in $R$ is equal to $R$. 
\begin{definition}\label{def:root_stack_log}
The \emph{$j$th root of $X$} is the full subcategory $\sqrt[j]{X}$ of $\CFGlog X$ consisting of those $t\colon T\to X$ such that the map $t^{-1}\ghost_X\to \ghost_T$ factors via $t^{-1}j\colon t^{-1}\ghost_X\to t^{-1}R$. 
\end{definition}
The factorization in \ref{def:root_stack_log} is automatically unique, as a consequence of the following lemma.
\begin{lemma}\label{lemma:unique_fact}
Let $X$, $j\colon \ghost_X\to R$ as above. Let $f,g\colon R \to N$ be two morphisms to a sheaf of sharp and saturated monoids. If $j\circ f=j\circ g$ then $f=g$.  
\end{lemma}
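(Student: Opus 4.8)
The plan is to reduce the equality $f=g$ to a statement about stalks of the sheaves involved, and then to combine the hypothesis on $j$ with the torsion-freeness of the groupification of a sharp saturated monoid. Since equality of morphisms of (\'etale) sheaves of monoids may be checked on stalks, and since forming the image of a morphism and the saturation of a submonoid both commute with the filtered colimits defining stalks, it suffices to treat the following purely monoid-theoretic situation: $R$ is a monoid, $P\coloneqq\operatorname{im}(j)\subseteq R$ is a submonoid whose saturation in $R$ is all of $R$, and $f,g\colon R\to N$ are homomorphisms into a sharp saturated monoid $N$ that agree on $P$; the goal is to show $f=g$.

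So fix $r\in R$. Unwinding the saturation hypothesis, there is an integer $n\geq 1$ with $nr\in P$. Since $f$ and $g$ agree on $P$,
\[
n\,f(r)=f(nr)=g(nr)=n\,g(r)
\]
in $N$, so everything comes down to cancelling $n$ in $N$.

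The one genuinely substantive point, and the only place where the hypotheses on $N$ are used, is that a sharp saturated monoid $N$ has torsion-free groupification: if $t\in N^\gp$ satisfies $nt=0$ for some $n\geq 1$, then $nt=0\in N$ and $n(-t)=0\in N$, so saturatedness forces both $t$ and $-t$ into $N$, whence $t\in N^\times$, which is trivial by sharpness. Granting this, $N$ is integral (being saturated), so $N\hookrightarrow N^\gp$; from $n\,(f(r)-g(r))=0$ in the torsion-free group $N^\gp$ we conclude $f(r)=g(r)$, and since $r$ was arbitrary, $f=g$. I do not expect any serious obstacle here: once the torsion-freeness observation is recorded — and that is exactly where integrality, saturatedness and sharpness of $N$ are all used — the remainder is a one-line diagram chase.
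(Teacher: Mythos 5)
Your proof is correct and follows essentially the same route as the paper's: localize so that a multiple $nr$ lands in the image of $j$, then use saturatedness and sharpness of $N$ to cancel $n$ in $N^{\gp}$. Your explicit packaging of the key step as ``a sharp saturated monoid has torsion-free groupification'' is exactly the in-line argument the paper gives.
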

\begin{proof}
Let $f,g\colon R\to N$ be two morphisms to a sharp $N$ such that $f\circ j=g\circ j$. Let $r$ be a section of $R$; then after possibly localizing, there exists an integer $d>0$ such that $dr$ is in the image of $j$. Hence $f(dr)=g(dr)$, i.e. $d(f(r)-g(r))=0$ in $N^{gp}$. As $N$ is saturated, $f(r)-g(r)$ is in $N$; as $N$ is integral, $d(f(r)-g(r))=0$ in $N$; and finally because $N$ is sharp $f(r)=g(r)$ in $N$. 
\end{proof}
\begin{lemma}\label{def:root_stack_minimal}
The subcategory $\minCFG{\sqrt[j]{X}}$ of minimal objects is equal to the subcategory of $\sqrt[j]{X}$ consisting of those $t\colon T\to X$ for which the factorization $\ghost_T\to t^*R$ is an isomorphism. 
\end{lemma}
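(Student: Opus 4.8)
The plan is to prove the lemma by exhibiting, for every object $t\colon T\to X$ of $\sqrt[j]{X}$, its minimal reduction explicitly, and then to invoke the general principle (from Gillam's work) that an object of a category fibred in groupoids over $\cat{LogSch}$ is minimal exactly when it coincides with its own minimal reduction. The explicit reduction will visibly have ghost sheaf $t^*R$, which gives both inclusions at once.

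First I would fix $t\colon T\to X$ in $\sqrt[j]{X}$, write $t^\flat\colon t^*\M_X\to\M_T$ for its structure map, and record that the factorisation $\phi\colon t^*R\to\ghost_T$ of the map $t^{-1}\ghost_X\to\ghost_T$ through $t^{-1}j$ exists (by definition of $\sqrt[j]{X}$), is unique (by \ref{lemma:unique_fact}), and is a \emph{local} homomorphism: if $\phi(q)=0$ then, localising, $dq=t^{-1}j(x)$ for some $d>0$ by the saturation hypothesis on $j$, so the map $t^{-1}\ghost_X\to\ghost_T$ kills $x$; as ghost maps of morphisms of log schemes are local this forces $x=0$, hence $dq=0$, hence $q=0$ since $t^*R$ is integral. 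This locality is precisely what makes the fibre product of sheaves of monoids
\[
\M^{\min}\;\coloneqq\;\M_T\times_{\ghost_T,\,\phi}t^*R,\qquad \alpha^{\min}\coloneqq\alpha_T\circ\mathrm{pr}_1,
\]
a log structure on $\ul T$, with ghost sheaf $t^*R$. I would then check: the first projection gives a morphism $\pi\colon T^{\min}\coloneqq(\ul T,\M^{\min})\to T$ over $\mathrm{id}_{\ul T}$; the pair $(t^\flat,t^{-1}j)$, compatible over $\ghost_T$ because $\phi\circ t^{-1}j$ equals the map $t^{-1}\ghost_X\to\ghost_T$, induces $t^*\M_X\to\M^{\min}$, i.e.\ a morphism $t^{\min}\colon T^{\min}\to X$ with $t^{\min}\circ\pi=t$; the ghost-level structure map of $t^{\min}$ is $t^{-1}j$, so $t^{\min}\in\sqrt[j]{X}$ with factorisation the identity of $t^*R$; and $\pi$ is a morphism $t\to t^{\min}$ in $\sqrt[j]{X}$, lying over $\mathrm{id}_{\ul T}$ and inducing $\phi$ on ghost sheaves.

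Next I would show that $t^{\min}$ is the minimal reduction of $t$, by establishing the relevant universal property: whenever $\nu\colon(\ul T,\mathcal N)\to X$ is an object of $\sqrt[j]{X}$ equipped with a morphism $\nu\to t$ over $\mathrm{id}_{\ul T}$, the induced map $\M_T\to\mathcal N$ together with the factorisation of the ghost-level map of $\nu$ through $t^{-1}j$ — which \ref{lemma:unique_fact} forces to be compatible with $\phi$ — produces a canonical morphism $\M^{\min}\to\mathcal N$ over the structure maps from $t^*\M_X$. In particular this shows that $\sqrt[j]{X}$ has enough minimal objects, so Gillam's characterisation applies: $t$ is minimal if and only if $\pi\colon t\to t^{\min}$ is an isomorphism, and since $\pi$ lies over $\mathrm{id}_{\ul T}$ and induces $\phi$ on ghost sheaves, this holds if and only if $\phi$ is an isomorphism, i.e.\ if and only if the factorisation $\ghost_T\to t^*R$ is an isomorphism. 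This is the statement of the lemma.

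The step I expect to be the main obstacle is not any single computation but the careful interface with Gillam's formalism: pinning down exactly which universal property characterises the minimal reduction, checking that the explicit $\M^{\min}$ above satisfies it (and hence that $\sqrt[j]{X}$ indeed lies in the essential image of $\Phi$), and verifying that the assignment $t\mapsto t^{\min}$ is stable under base change, so that the minimal objects organise into a category fibred in groupoids over $\cat{Sch}$ whose image under $\Phi$ is $\sqrt[j]{X}$. Throughout, it is \ref{lemma:unique_fact} — and through it the hypothesis that the saturation of the image of $j$ equals $R$ — that keeps the construction well-defined; without it $\phi$ need not be unique and neither $\M^{\min}$ nor the reduction morphism $\pi$ can be formed.
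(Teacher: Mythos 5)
Your argument is correct and is essentially the one the paper intends: the lemma is stated there without proof, but a sketch left in the source constructs the same minimal reduction (as the log structure pulled back along $t^*R \to \ca O_T/\ca O_T^\times$, which agrees with your $\M^{\min} = \M_T \times_{\ghost_T} t^*R$ since $\phi$ is local), and your verification that locality of $\phi$ follows from the saturation hypothesis via \ref{lemma:unique_fact}-type divisibility is exactly the missing detail. Two cosmetic slips worth fixing: the vertical morphism induced by $\mathrm{pr}_1\colon \M^{\min} \to \M_T$ is a morphism of log schemes $T \to T^{\min}$, not $T^{\min} \to T$ (as your own identities $t^{\min}\circ\pi = t$ and $\pi\colon t \to t^{\min}$ require), and the implication $dq = 0 \Rightarrow q = 0$ uses sharpness of $t^*R$ rather than integrality.
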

Recall that morphisms in $\minCFG{\sqrt[j]{X}}$ are induced by morphisms of $X$-log schemes $T\to T'$ that are strict. We can make $\minCFG{\sqrt[j]{X}}$ into a category fibered in groupoids over $\cat{Sch}/\ul{X}$ by sending $T\to X$ to the underlying morphism of schemes.

In \cite[Def. 4.16]{Borne2012Parabolic-sheav} Borne and Vistoli define similarly a stack of roots $X_{R|\ghost_X}$ via the machinery of Deligne-Faltings log structures. 
Comparing the definitions one easily verifies 
 
\begin{lemma}\label{lem:BV_equivalence}
There is natural equivalence $\minCFG{\sqrt[j]{X}} \to X_{R|\ghost_X}$ of CFG over $\cat{Sch}/\ul{X}$. 
\end{lemma}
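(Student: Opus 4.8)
The plan is to write down a functor in each direction and check that they are mutually quasi-inverse, the essential input being Borne--Vistoli's equivalence between log structures and Deligne--Faltings structures (\cite[\S3]{Borne2012Parabolic-sheav}). Under that equivalence a log scheme $T$ corresponds to its \emph{canonical} Deligne--Faltings structure, the symmetric monoidal functor $L_{\ul T}\colon \ghost_T \to \operatorname{Div}_{\ul T}$ carrying a local section of the ghost sheaf to the associated line bundle equipped with its section; the assignment is compatible with pullback along morphisms of schemes, and under it a morphism of log structures inducing a prescribed map on ghost sheaves corresponds to a morphism of DF structures lying over that map of index monoids. I will also use that the index monoid of the canonical DF structure is $\ghost_T$, which is sharp and saturated.

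First I would construct $\minCFG{\sqrt[j]{X}} \to X_{R|\ghost_X}$. By \ref{def:root_stack_minimal}, an object of $\minCFG{\sqrt[j]{X}}$ over $\ul t\colon \ul T \to \ul X$ is a log structure $\M_T$ on $\ul T$ together with a morphism of log schemes $t\colon T \to X$ lifting $\ul t$, such that the induced map of ghost sheaves is a composite $\ul t^{-1}\ghost_X \xrightarrow{\ul t^{-1}j} \ul t^{-1}R \isom \ghost_T$ for a (necessarily unique) isomorphism $\ul t^{-1}R \isom \ghost_T$. Applying the canonical DF functor to $\M_T$ and transporting along this isomorphism yields a DF structure $M\colon \ul t^{-1}R \to \operatorname{Div}_{\ul T}$; applying it to the structural morphism of $t$ and comparing with the pullback along $\ul t$ of the canonical DF structure of $X$ equips $M\circ \ul t^{-1}j$ with a morphism from $\ul t^*$ of the DF structure of $X$. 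This is exactly the datum of an object of the Borne--Vistoli root stack $X_{R|\ghost_X}$ (\cite[Def.~4.16]{Borne2012Parabolic-sheav}); naturality in $\ul T$ is immediate from the compatibility of the canonical DF functor with pullback.

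In the other direction, from an object of $X_{R|\ghost_X}$ over $\ul t\colon \ul T \to \ul X$ --- a DF structure on $\ul T$ indexed by $\ul t^{-1}R$ together with data extending, along $\ul t^{-1}j$, the pullback of the DF structure of $X$ --- I would apply the inverse equivalence to obtain a log structure $\M_T$ on $\ul T$ with $\ghost_T \isom \ul t^{-1}R$ and a morphism of log structures $\ul t^{-1}\M_X \to \M_T$ inducing $\ul t^{-1}j$ on ghost sheaves, hence a morphism of log schemes $t\colon T \to X$. The existence of this factorization places $t$ in $\sqrt[j]{X}$ by \ref{def:root_stack_log} (the factorization being unique by \ref{lemma:unique_fact}), and the isomorphism $\ghost_T \isom \ul t^{-1}R$ makes $t$ minimal by \ref{def:root_stack_minimal}. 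That the two constructions are mutually quasi-inverse, and commute with the projections to $\cat{Sch}/\ul X$, then follows formally from the fact that the canonical DF assignment is an equivalence.

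The one point that genuinely needs checking --- and the only place the hypothesis on $j$ (that $R$ equals the saturation of the image of $j$) enters --- is the dictionary between the ``extension of DF structures along $j$'' occurring in \cite[Def.~4.16]{Borne2012Parabolic-sheav} and the ``morphism of log structures lifting $j$ on ghost sheaves'' occurring in \ref{def:root_stack_minimal}: one must verify that the Borne--Vistoli comparison morphism is automatically an isomorphism of DF structures, rather than merely a morphism, precisely because $\ul t^{-1}R$ is sharp and saturated, and that no sheafification or descent is needed since both conditions are \'etale-local on $\ul T$. Transported through the DF equivalence this is exactly \ref{lemma:unique_fact}, so the verification is short, consistent with the ``one easily verifies'' of the statement; I do not anticipate any real obstacle.
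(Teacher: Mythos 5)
Your proposal is correct and follows exactly the route the paper intends: the paper offers no written proof beyond ``comparing the definitions one easily verifies,'' and what you have written is precisely that comparison, carried out via the Borne--Vistoli equivalence between log structures and Deligne--Faltings structures, with the uniqueness of the factorisation (\ref{lemma:unique_fact}) and the sharp/saturated hypothesis on $R$ entering where one would expect. No discrepancy with the paper's (omitted) argument.
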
 
\begin{definition} [{\cite[4.1, 4.3]{Borne2012Parabolic-sheav}}] \label{def:system_of_denominators}
A \textit{Kummer homomorphism} $f\colon A\to B$ of finitely generated monoids is an injective morphism such that for every $b\in B$ there exists a positive integer $d$ such that $db$ is in $A$.
Let $X$ be a scheme. A \textit{system of denominators} is a morphism of coherent sheaves of monoids $A\to B$ on $X_{et}$ such that the induced morphisms at geometric stalks are Kummer homomorphisms.
\end{definition}
\begin{lemma}\label{lemma_rootstack_representable}
Let $X$ be a log scheme and $j\colon \ghost_X\to R$ be a system of denominators. Then $\minCFG{\sqrt[j]{X}} \to X$ is relatively representable by proper, quasi-finite, finitely presented algebraic stacks. If moreover for every geometric point $x\colon \Spec \Omega \to X$ the order of $R^{gp}/\ghost_X^{gp}$ is prime to the characteristic of $\Omega$, then $\minCFG{\sqrt[j]{X}} \to X$ is representable by Deligne-Mumford stacks. 
\end{lemma}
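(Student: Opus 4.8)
The strategy is to reduce to the Borne–Vistoli theory via the equivalence $\minCFG{\sqrt[j]{X}} \cong X_{R|\ghost_X}$ established in \ref{lem:BV_equivalence}, and then to verify the geometric properties of $X_{R|\ghost_X}$ — properness, quasi-finiteness, finite presentation, and (under the coprimality hypothesis) Deligne–Mumford-ness — directly from the local structure of a system of denominators. The key point is that all four assertions are étale-local on $\underline X$, so by choosing an étale chart one reduces to the case where $X$ carries a global chart $P \to \M_X$ by a fine saturated monoid, and $j$ is induced by a Kummer homomorphism $P \to Q$ of fine saturated monoids. In that case Borne–Vistoli identify $X_{R|\ghost_X}$ with the fibre product $X \times_{[\Spec \bb Z[P]/\widehat{P}]} [\Spec \bb Z[Q]/\widehat Q]$ (torus-quotient stacks of the associated affine toric varieties), so it suffices to analyse the single morphism $[\Spec \bb Z[Q]/\widehat Q] \to [\Spec \bb Z[P]/\widehat P]$ coming from $P \hookrightarrow Q$.

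Concretely, I would proceed as follows. First, I would reduce to charts as above and invoke \ref{lem:BV_equivalence} to replace $\minCFG{\sqrt[j]X}$ with the Borne–Vistoli root stack, which is algebraic by \cite[§4]{Borne2012Parabolic-sheav}. Second, for finite presentation and the representability of $\minCFG{\sqrt[j]X} \to X$ by algebraic stacks, I would note that $\Spec \bb Z[Q] \to \Spec \bb Z[P]$ is finite (a Kummer homomorphism makes $Q$ a finitely generated $P$-module, since every generator of $Q$ has a multiple in $P$), hence finitely presented and proper; passing to the quotient by the diagonalizable groups $\widehat Q \to \widehat P$ (which is itself finite, with Cartier dual the finite group $Q^\gp/P^\gp$) preserves properness and finite presentation, and after base change to $X$ these properties descend. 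Third, for quasi-finiteness I would observe that the morphism of toric varieties $\Spec\bb Z[Q]\to \Spec\bb Z[P]$ has finite fibres — this is again the finiteness of $\bb Z[Q]$ over $\bb Z[P]$ — and quotienting by the surjection of diagonalizable groups keeps fibres quasi-finite; alternatively, quasi-finiteness follows formally from properness plus finiteness of fibres. Finally, for the Deligne–Mumford statement, the inertia of $X_{R|\ghost_X}$ over $X$ at a geometric point $x$ is exactly the kernel of $\widehat{Q}_x \to \widehat{P}_x$, whose Cartier dual is $R^\gp_x/\ghost^\gp_{X,x}$; when the order of this finite group is prime to $\operatorname{char}\Omega$ the diagonalizable group scheme is étale, so the stack has unramified (indeed étale) inertia and is Deligne–Mumford.

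The main obstacle I anticipate is the bookkeeping in the chart reduction: one must check that a system of denominators étale-locally does arise from a Kummer homomorphism of fine saturated monoids compatible with the chart $P \to \M_X$ (so that the identification $R = $ (cokernel data of) $Q$ is legitimate), and that the resulting local descriptions of $X_{R|\ghost_X}$ glue — but this is precisely the content packaged in \cite[Def. 4.1, 4.3]{Borne2012Parabolic-sheav} together with \ref{lem:BV_equivalence}, so it should be a matter of citing rather than reproving. A secondary subtlety is keeping straight that "quasi-finite" here is meant relative to $X$ and includes separatedness/finite type, all of which follow from the toric picture; and that properness of the root stack over $X$ (as opposed to mere universal closedness) uses the valuative criterion, which for these torus-quotient stacks reduces to the saturation hypothesis on the image of $j$ — exactly the standing assumption that the saturation of $\operatorname{im}(j)$ in $R$ equals $R$.
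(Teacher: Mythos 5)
Your proposal is correct and follows essentially the same route as the paper: the paper's entire proof is to invoke \ref{lem:BV_equivalence} and then cite Proposition 4.19 of \cite{Borne2012Parabolic-sheav}, which is exactly the reduction you describe. The additional material in your sketch (charts, finiteness of $\bb Z[Q]$ over $\bb Z[P]$, Cartier duality of the inertia) is an accurate unpacking of what that cited proposition contains, but the paper does not reproduce it.
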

\begin{proof}
This is proposition 4.19 of \cite{Borne2012Parabolic-sheav}, which applies by \ref{lem:BV_equivalence}. 
\end{proof}

\begin{example}\label{eg:root_stack}
Let $X$ be a regular scheme and $\iota\colon D\hookrightarrow X$ an irreducible closed subscheme of codimension $1$. Given an integer $r>0$ invertible on $X$, there is a classical notion of root stack of order $r$ along $D$, denoted $\sqrt[r]{X,D} \to X$; it is the stack over $X$ whose points over $t\colon T\to X$ are triples of a line bundle $\ca L$ on $T$, a section $l$ of $\ca L$, and an isomorphism $\ca L^{\otimes r}\to t^*\ca O(D)$ sending $l^{\otimes r}$ to $1$.
 The map $\sqrt[r]{X,D} \to X$ is obtained, locally on $X$, by taking a finite flat cover $Y\to X$ obtained by extracting a $r$-root of a function cutting out $D$; and then taking the stack quotient $[Y/\mu_r]$ by the natural action of $\mu_r$ on $Y$.
 
If we endow $X$ with its divisorial log structure $\mathsf M_X$ induced by $D$ then the ghost sheaf $\ghost_X$ is naturally identified with $\iota_*\bb N$, and in particular has a unique generating section $e$. Writing $j\colon \ghost_X\to \ghost_X$ for the multiplication by $r$, there is a natural isomorphism $\minCFG{\sqrt[j]{X}} \to \sqrt[r]{X,D}$ sending an object $t\colon T\to \minCFG{\sqrt[j]{X}}$ to a triple $(\mathcal L,l, \phi)$ constructed as follows: the image of $e$ in $\ghost_T$ is uniquely divisible by $r$, say $re' = e$. The preimage of $e'$ under $\mathsf M_T\to \ghost_T$ is a $\mathcal O^{\times}_T$-torsor, denoted $\mathcal O^{\times}_T(-e)$. The associated invertible sheaf $\mathcal O_T(-e)$ comes with a map to $\mathcal O_T$ induced by the log structure. Then we take $\mathcal L$ to be its dual $\mathcal O_T(e)$ and $l$ to be its distinguished global section. The isomorphism $\phi$ is induced by the equality $re' = e$. 
%
%
%
\end{example}
\subsection{Log curves}\label{sec:log_curves}
\begin{definition}[\cite{Kato2000Log-smooth-defo}]
Let $S$ be a log scheme. A morphism $\pi\colon X \to S$ of log algebraic spaces is a \emph{log curve} if it is proper, log smooth, integral, and all geometric fibres are reduced, connected and of pure dimension 1. 
\end{definition}
The underlying scheme of a log curve is a prestable curve. Unless otherwise noted, all log curves we consider will be \emph{vertical}; in other words, the corresponding prestable curves are unmarked. 

Let $k$ be an algebraically closed field and $X /(\Spec k,\M)$ a log curve. From the description of the local structure of log curves at nodes (see \cite{Kato2000Log-smooth-defo}), at every node $p\in X$ there is a canonical non-zero element $\delta_p \in \ghost$ and isomorphism 
\begin{equation}\ghost_{X,p}\cong \frac{\ghost\oplus \bb N\langle e_1\rangle \oplus \bb N\langle e_2 \rangle}{e_1+e_2=\delta_p}. 
\end{equation}
Writing $\frak X$ for the dual graph of $X$, we label each edge $e_p$ of $\frak X$ by the associated element $\delta_p\in \ghost$. We think of these labels as the \emph{lengths} of the edges; in the special case where $\ghost = \bb N$ this fits with the intuitive notion of a graph with positive integer edge lengths. The metric graph $\frak X$ is the \emph{tropicalisation} of $X$. 


Let $X/S$ be a log curve. To every geometric fibre $X_{s}$ one can associate the dual graph $\frak X_{s}$ of the log curve; and in turn to this graph is associated the free finitely generated abelian group $H_1(\frak X_{s},\bb Z)$, whose rank is the first Betti number of $\frak X_{s}$. In \cite[Definition 3.29]{Holmes2020Models-of-Jacob} the authors introduce a locally quasi-finite \'etale $S$-algebraic space, the \emph{tropical homology of $X/S$}, denoted $\mathcal H_{1,X/S}$, whose geometric fibres are naturally identified with the groups $H_1(\frak X_{s},\bb Z)$. Note that $\mathcal H_{1,X/S}$ is generally very far from being separated over $S$, as it maps isomorphically to $S$ over the locus of compact-type curves, and has infinite fibre over any point outside that locus.

\subsection{The logarithmic and tropical jacobians}
In this section we briefly recall the key definitions from \cite{Molcho2018The-logarithmic}, to which we refer for details and proofs. We begin by introducing two sheaves of abelian groups on $\cat{LogSch}/S$ for the strict \'etale topology:
\begin{align*}
\bb G_m^{trop}\colon T &\mapsto \ghost_T^{gp}(T) \\
\bb G_m^{log}\colon  T & \mapsto \M_T^{gp}(T), 
\end{align*}
referred to as the \emph{tropical} and \emph{logarithmic} multiplicative groups respectively. 

\subsubsection{The intersection pairing}
The tropical homology $\mathcal H_{1,X/S}$ of \ref{sec:log_curves} is equipped with a natural \emph{intersection pairing}
\begin{equation}
\cap \colon \mathcal H_{1,X/S}\times \mathcal H_{1,X/S} \to \bb G_m^{trop}, 
\end{equation}
which over a geometric point $s$ of $S$ sends two directed cycles $\gamma$, $\gamma' \in  \mathcal H_{1,X/S}$ to the sum 
\begin{equation}
\gamma \cap \gamma' \coloneqq \sum_{\ul e \in \gamma} \gamma'(\ul e) \delta_e
\end{equation}
over directed edges $\ul e$ in $\gamma$ where (writing $-\ul e$ for the same edge with opposite orientation) we set
\begin{equation}
\gamma'(\ul e) = 
\begin{cases}
1 & \text{if } \ul e \in \gamma' \\
-1 & \text{if } -\ul e \in \gamma' \\
0 & \text{else. }\\
\end{cases}
\end{equation}

\subsubsection{The bounded monodromy condition}
Given two elements $a,b$ of a monoid $M$, we say that $a$ is \emph{bounded by $b$} if there exist integers $m$ and $n$ such that both $a - mb$ and $nb - a$ lie in $M$

Given a log curve $X$ over $(\on{Spec} k, \ghost)$ with $k$ an algebraically closed field $k$, and given a map $\alpha\colon H_1(\frak X) \to \ghost^\gp$, we say that $\alpha$ has \emph{bounded monodromy} around a cycle $\gamma \in H_1(\frak X)$ if $\alpha(\gamma)$ is bounded by the length of $\gamma$ (i.e. by the sum of the lengths of the edges appearing in $\gamma$). We say $\alpha$ has \emph{bounded monodromy} if it has so around $\gamma$ for all $\gamma \in H_1(\frak X)$. 

If $X/S$ is a log curve, we write 
\begin{equation}
\sheafhom(\ca H_{1, X/s}, \bb G_m^\trop)^\dagger \subseteq \sheafhom(\ca H_{1, X/s}, \bb G_m^\trop)
\end{equation}
for the subsheaf consisting of those homomorphisms which have bounded monodromy on each geometric fibre. 

This condition of bounded monodromy plays a central role in the construction of the logarithmic and tropical jacobians in \cite{Molcho2018The-logarithmic}. However, for us it will not be so prominent, as we are mainly intersected in torsion points, for which the condition is automatic.

\subsubsection{The logarithmic and tropical jacobians}

The intersection pairing furnishes a natural injective map 
\begin{equation}
\ca H_{1, X/S} \to \sheafhom(\ca H_{1, X/s}, \bb G_m^\trop)
\end{equation}
which on geometric fibres sends a cycle $\gamma$ to the map $\gamma' \mapsto \gamma' \cap \gamma$. One checks easily that the image of $\ca H_{1, X/s}$ has bounded monodromy, inducing an injection 
\begin{equation}\label{eg:cycles_to_bm_maps}
\ca H_{1, X/S} \to \sheafhom(\ca H_{1, X/s}, \bb G_m^\trop)^\dagger. 
\end{equation}
\begin{definition}\label{def:tro_jac}
The \emph{tropical jacobian} $\TroPic^0_{X/S}$ is the cokernel of \ref{eg:cycles_to_bm_maps}, as a functor from log schemes over $S$ to abelian groups. 
\end{definition}
Molcho and Wise also define a larger object, the \emph{tropical Picard space} $\TroPic_{X/S}$, of which $\TroPic^0_{X/S}$ is the degree-zero part. But the details of the definition are not so important to us, as our technical results concern torsion elements, which always have degree zero. 

If $X$ is a log scheme, a \emph{log line bundle} on $X$ is a $\bb G_m^\LOG$-torsor for the strict \'etale topology. If $X/S$ is a log curve and $\ca L$ on $X$ a log line bundle, the natural map $\bb G_m^\LOG \to \bb G_m09^\trop$ induces a $\bb G_m^\trop$-torsor $\bar {\ca L}$ on $X$. If $S$ is a geometric point then we identify $\bar {\ca L}$ with an element of $H^1(\frak X, \ghost^\gp) = Hom(H_1(\frak X), \ghost^\gp)$, and we say that $\ca L$ has \emph{bounded monodromy} if the corresponding map $H_1(\frak X)\to \ghost^\gp$ does so. For general $S$ we say $\ca L$ has bounded monodromy if it does so over every geometric point of $S$. 

\begin{definition}
The \emph{log Picard space} of $X/S$, denoted $\LogPic_{X/S}$, is the strict-\'etale sheaf on $S$ of isomorphism classes of log line bundles on $X$ with bounded monodromy. 
\end{definition}
Logarithmic line bundles have an integer-valued degree, and we write $\LogPic^0_{X/S}$ for the degree-zero part. 

Analogously we write $\Pic_{X/S}$ for the relative Picard space of $X/S$, and $\Pic_{X/S}^{\ul 0}$ for the open subgroup consisting of those line bundles with \emph{multidegree 0}, i.e. degree 0 on each irreducible component of each geometric fibre of $X/S$. The natural map $\ca O_X^\times \to \M_X^\gp$ induces a map
\begin{equation}
\Pic_{X/S} \to \LogPic_{X/S}
\end{equation}
which preserves total degree. This induces an exact sequence
\begin{equation}\label{SES:pic_log_tro}
0 \to \Pic_{X/S}^{\ul 0} \to \LogPic_{X/S} \to \TroPic_{X/S} \to 0
\end{equation}
which restricts to an exact sequence 
\begin{equation}
0 \to \Pic_{X/S}^{\ul 0} \to \LogPic^0_{X/S} \to \TroPic^0_{X/S} \to 0. 
\end{equation}

\section{Multiplication by $r$ on $\TroPic$ and $\LogPic$}
Let $X/S$ be a log curve and $r$ a positive integer. In this section we prove certain basic structural results about the `multiplication by $r$' map on the logarithmic and tropical jacobians of $X/S$. We begin by summarising these results. 
\begin{theorem}
The multiplication by $r$ map $\TroPic_{X/S} \to \TroPic_{X/S}$ is:
\begin{enumerate}
\item Relatively representable by algebraic stacks with log structure.
\item Proper, integral, log flat and quasi-finite.
\item Is an isomorphism in degree 0 over the open locus in $S$ over which $X$ is of compact type. 
\item Is log \'etale if $r$ is invertible on $S$. 
\end{enumerate}
\end{theorem}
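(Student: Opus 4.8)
The plan is to reduce all four assertions to a single structural statement about the $r$-torsion subgroup $\TroPic^0_{X/S}[r]$. Every property in the theorem is étale-local on $S$, so I would first pass to an étale cover of $S$ trivialising the tropical homology, writing $\mathcal H_{1,X/S}\cong\underline{\bb Z}^{b}$. Part (3) is then immediate: over the compact-type locus $\mathcal H_{1,X/S}=0$, hence $\TroPic^0_{X/S}=\coker(0\hookrightarrow 0)=0$, so multiplication by $r$ is the identity there. For (1), (2) and (4) I would argue that, over any test log scheme mapping to the target, the fibre of $[r]\colon\TroPic_{X/S}\to\TroPic_{X/S}$ is either empty (when the degree fails to be divisible by $r$) or a torsor under $\TroPic^0_{X/S}[r]$, and that the cokernel of $[r]$ on $\TroPic^0_{X/S}$ --- which the snake lemma below exhibits as a quotient of finite étale group schemes, hence finite and étale --- cuts out the "nonempty" locus as an open-and-closed sublocus. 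Thus $[r]$ factors as a $\TroPic^0_{X/S}[r]$-torsor followed by an open-and-closed immersion, and since all the listed properties are local on the target, preserved by the empty morphism and by open-and-closed immersions, and inherited by a torsor from its structure group, it suffices to prove that $\TroPic^0_{X/S}[r]$ is relatively representable over $S$ by algebraic stacks with log structure which are proper, integral, log flat, quasi-finite, and log étale when $r$ is invertible on $S$.

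To analyse $\TroPic^0_{X/S}[r]$ I would apply the snake lemma to multiplication by $r$ on the defining exact sequence $0\to\mathcal H_{1,X/S}\xrightarrow{\iota}\sheafhom(\mathcal H_{1,X/S},\bb G_m^\trop)^\dagger\to\TroPic^0_{X/S}\to 0$ from \ref{def:tro_jac}. Since $\mathcal H_{1,X/S}$ is a sheaf of free finitely generated abelian groups and $\bb G_m^\trop=\ghost^\gp$ is torsion-free (our log structures being fs), the first two terms carry no $r$-torsion; using exactness of $\sheafhom(\mathcal H_{1,X/S},-)$ together with the automatic bounded monodromy of torsion classes, this identifies $\TroPic^0_{X/S}[r]$ with the kernel of the reduction $\bar\iota\colon\mathcal H_{1,X/S}/r\to\sheafhom(\mathcal H_{1,X/S},\bb G_m^\trop/r\bb G_m^\trop)$. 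Concretely this is the sublocus of the finite étale $S$-scheme $\mathcal H_{1,X/S}/r$ along which, at a point $v$, every intersection number $v\cap\gamma$ becomes divisible by $r$ in the ghost sheaf. The decisive observation is that such divisibility is exactly what a root stack produces: if $j\colon\ghost_S\to R$ is the system of denominators obtained by adjoining $\tfrac1r\delta_e$ for each edge-length $\delta_e$ of $X/S$ and saturating --- a Kummer homomorphism whose order is a power of $r$, so a system of denominators in the sense of \ref{def:system_of_denominators} --- then $\bar\iota$ vanishes after pullback to $\minCFG{\sqrt[j]{S}}$, because there $\ghost=R$ contains every $\tfrac1r\delta_e$ and hence every $\tfrac1r(v\cap\gamma)$. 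Running this through Gillam's formalism (\ref{lem:strict_preserves_minimal_objects}, \ref{strict_over_minCFG_has_minCFG}), I expect to show that $\TroPic^0_{X/S}[r]$ lies in the essential image of $\Phi$ and is, étale-locally on $S$, a finite disjoint union $\coprod_{v}\minCFG{\sqrt[j_v]{S}}$ of root stacks, where $j_v$ adjoins only the finitely many fractions $\tfrac1r(v\cap\gamma)$ that the given class $v$ requires, again of order a power of $r$. By \ref{lemma_rootstack_representable}, together with the standard facts that logarithmic root stacks are proper, quasi-finite, finitely presented, log flat and integral over their base, and become log étale (and Deligne--Mumford) once the relevant order is invertible, each summand has precisely the properties demanded in the previous paragraph; a finite disjoint union of them does too, which completes the argument.

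The hard part will be this last step with Gillam's machinery: making it pin down the logarithmic structure carried by $\TroPic^0_{X/S}[r]$ --- equivalently, the systems of denominators $j_v$ --- and checking that the resulting category fibred in groupoids over $\cat{Sch}/S$ really has $\Phi$ equal to the sheaf $\TroPic^0_{X/S}[r]$ coming from the snake lemma, all the while correctly tracking the $\mu$-inertia created by passing to minimal objects (compare \ref{eg:P1modGm}); it is this inertia that forces the representing objects to be algebraic \emph{stacks} rather than spaces, and that makes "with log structure" essential to the statement. The only other point that needs care is the logarithmic bookkeeping in the reductions of the first paragraph --- that the four properties are local on the target for morphisms of logarithmic algebraic stacks (in particular for "integral" and "log flat"), and that they interact well with the torsor structure on, and the cokernel of, the map $[r]$ --- which is routine but should be spelled out, given that $\TroPic^0_{X/S}$ and $\TroPic^0_{X/S}[r]$ are only logarithmic spaces and stacks rather than genuine log schemes.
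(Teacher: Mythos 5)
Your route is essentially the paper's: part (3) via the vanishing of $\ca H_{1,X/S}$ over the compact-type locus, and the rest by using torsion-freeness of $\sheafhom(\ca H_1,\bb G_m^{\trop})^\dagger$ to embed $\TroPic[r]$ into the locally quasi-finite \'etale cover $\ca H_1/r\ca H_1$, realising that inclusion as a root stack, and applying Borne--Vistoli. The genuine gap is the sentence invoking ``the standard facts that logarithmic root stacks are \dots log flat and integral over their base.'' This is not a standard fact and is false for a general system of denominators: a Kummer homomorphism of fs monoids need not be an integral morphism of monoids. For instance $A=\{(a,b)\in\bb N^2: a+b \text{ even}\}\hookrightarrow \bb N^2$ is Kummer of order $2$ but not integral (test the identity $(1,1)+(1,0)=(2,0)+(0,1)$), and correspondingly $\bb Z[A]\to\bb Z[\bb N^2]$ has generic rank $2$ but fibre length $3$ over the cone point, so the associated root stack is neither flat nor log flat. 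This is precisely why \cite[Prop.\ 4.19]{Borne2012Parabolic-sheav} yields properness, quasi-finiteness and finite presentation but \emph{not} flatness, and why the paper must separately prove, by an explicit computation (\ref{lem:inclusion_integral}, using the canonical decomposition $n=\alpha(n)+\sum_j a_j(n)\gamma_j$ of elements of $\mathsf N_\gamma$), that the particular inclusion $\ghost\to\ghost_{\gamma/r}$ is an integral morphism of monoids, after which log flatness follows from \cite[Prop.\ 4.6.7]{Ogus2018Lectures-on-log}. Your argument omits this computation, and it cannot be outsourced to a general statement about root stacks; it is the only substantive input to parts (2) and (4) beyond Borne--Vistoli.

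A related imprecision: your system of denominators $j_v$ (adjoin the fractions $\tfrac1r(v\cap\gamma)$ to $\ghost_S$ and saturate inside $\ghost_S^{\gp}\otimes\bb Q$) is not the monoid the paper uses. The paper's $\ghost_{\gamma/r}$ is the saturation of $\ghost$ inside $(\ghost^{\gp}\oplus H_1(\frak X))/(\gamma\cdot\gamma',-r\gamma')$, which remembers the class $\gamma'$ and not merely its intersection number with $\gamma$. The two extensions cut out the same subsheaf of $\ca H_1/r\ca H_1$ (since $\ghost_T^{\gp}$ is torsion-free, divisibility of the intersection numbers with a basis suffices), but they are different monoids in general, and the integrality verification has to be carried out for whichever one you actually take, so pinning it down is not optional. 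Your reduction of the full multiplication-by-$r$ map to the $r$-torsion is also looser than stated ($[r]$ is not surjective as a strict-\'etale sheaf, so ``torsor followed by an open-and-closed immersion'' needs care), but the paper is no more explicit on that point, so I would not count it against you.
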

\begin{theorem}
The multiplication by $r$ map $\LogPic_{X/S} \to \LogPic_{X/S}$ is representable by proper flat quasi-finite algebraic spaces with log structure. If the $r$-torsion $\TroPic_{X/S}[r] \to S$ is strict then the underlying map of algebraic spaces $\ul{\LogPic}_{X/S} \to \ul{\LogPic}_{X/S}$ is \'etale and respects the group structures. 
\end{theorem}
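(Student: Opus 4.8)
My plan is to read everything off the short exact sequence \ref{SES:pic_log_tro} and the behaviour of multiplication by $r$ on its outer terms. Multiplication by $r$ induces a morphism of short exact sequences with middle row \ref{SES:pic_log_tro}: the map $[r]_{\LogPic}$ lies over $[r]_{\TroPic}$ and restricts to $[r]_{\Pic}$ on the subsheaf $\Pic^{\ul 0}_{X/S}$. Since $\Pic^{\ul 0}_{X/S}$ is a semiabelian algebraic space over $S$ (fibrewise an extension of an abelian variety by a torus with cocharacter lattice $\ca H_{1,X/S}$), the map $[r]_{\Pic}$ is representable by finite locally free surjective morphisms, its kernel $\Pic^{\ul 0}_{X/S}[r]$ is finite locally free over $S$, and it is \'etale exactly over the open subscheme where $r$ is invertible. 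For $[r]_{\TroPic}$ I use the preceding theorem verbatim.

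\textbf{Representability, properness, flatness, quasi-finiteness.}
Fix a strict $t\colon T\to\LogPic$, corresponding to a log line bundle $\ca L$ on $X_T$ with image $\bar{\ca L}$ in $\TroPic$. Set $P\coloneqq\LogPic\times_{[r],\LogPic,t}T$ (the sheaf of $r$-th roots of $\ca L$) and $Q\coloneqq\TroPic\times_{[r],\TroPic}T$ (the $r$-th roots of $\bar{\ca L}$); by the previous theorem $Q\to T$ is representable by algebraic spaces with log structure, and is proper, integral, log flat and quasi-finite. (That $Q$ is an algebraic space rather than just a stack comes from triviality of the relative inertia of $[r]_{\TroPic}$, seen by comparison with a local section.) The sequence \ref{SES:pic_log_tro} produces a morphism $P\to Q$, and I claim it is a torsor under the pullback of $\Pic^{\ul 0}_{X/S}[r]$ for the strict fppf topology: it is a pseudo-torsor because two $r$-th roots of $\ca L$ lying over the same point of $Q$ differ by a section of $\Pic^{\ul 0}_{X/S}$ annihilated by $r$, and it has sections fppf-locally on $Q$ because $\LogPic\to\TroPic$ is smooth surjective (so a point of $Q$ lifts to $\LogPic$ after an \'etale cover) and $[r]_{\Pic}$ is fppf surjective (so that lift can be corrected into an $r$-th root of $\ca L$). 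Therefore $P\to Q$ is finite locally free and strict (its structure group has the pulled-back log structure), so $P\to T$ is representable by algebraic spaces with log structure, proper, integral, quasi-finite and log flat --- and flat wherever $Q\to T$ is. This is the first assertion.

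\textbf{The strict case.}
Suppose now $\TroPic_{X/S}[r]\to S$ is strict. The plan is: (i) show this strictness propagates from the kernel to the map $[r]_{\TroPic}$ itself, by a fibrewise argument using the explicit cone description of $\TroPic$ and of its multiplication-by-$r$ map set up above; (ii) deduce from Gillam's machinery (\ref{lem:strict_preserves_minimal_objects}, \ref{strict_over_minCFG_has_minCFG}) that $[r]_{\LogPic}$ is strict, being an extension of the strict maps $[r]_{\TroPic}$ and $[r]_{\Pic}$ (the latter strict because $\Pic^{\ul 0}_{X/S}$ carries the log structure pulled back from $S$); (iii) conclude that, $[r]_{\LogPic}$ being strict, its underlying morphism of algebraic spaces is the proper, quasi-finite and flat morphism produced above, and is \'etale --- this last point being where the strictness hypothesis does its real work. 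For the group structures: $\LogPic_{X/S}$ is a commutative group object in log algebraic spaces, and strictness of its structure morphisms over $S$ (which follows once the torsion is strict) lets the multiplication, inversion and identity descend to $\ul{\LogPic}_{X/S}$, with respect to which $\ul{[r]}$ is the multiplication-by-$r$ homomorphism.

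\textbf{Main obstacle.}
I expect steps (i)--(iii) to be where the work lies: one must describe the underlying algebraic space of $\LogPic$ carefully enough --- through Gillam's minimal objects and the cone description of $\TroPic$ --- to see that strictness of the torsion genuinely forces $[r]_{\LogPic}$ strict and carries the group structure, and in particular that the visible ramification of $[r]$ on the semiabelian part $\Pic^{\ul 0}_{X/S}$ is, after passing to minimal objects, absorbed by the log structure. This is precisely the phenomenon of \ref{eg:P1modGm} (``the log structure rigidifies away the extra inertia''), now in the guise of the log structure rigidifying away ramification; turning that heuristic into a rigorous argument is the crux.
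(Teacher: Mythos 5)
Your skeleton --- the exact sequence \ref{SES:pic_log_tro}, divisibility of $\Pic^{\ul 0}_{X/S}$ in the strict flat topology, and the presentation of the fibres of $[r]_{\LogPic}$ as torsors under $\Pic^{\ul 0}_{X/S}[r]$ over the fibres of $[r]_{\TroPic}$ --- is the same as the paper's. But two of the inputs you feed into it are false, and they fail precisely where the theorem has content. First, $\Pic^{\ul 0}_{X/S}[r]$ is \emph{not} finite locally free over $S$ beyond compact type: the toric rank of the semiabelian fibres jumps, so the fibre degree of the $r$-torsion drops from $r^{2g}$ to $r^{2g-b}$ over a fibre whose dual graph has first Betti number $b$, and the kernel of $[r]_{\Pic}$ is only quasi-finite, flat and separated, never proper once some $b>0$. (This failure is the reason the paper passes to $\LogPic$ at all.) Consequently your torsor $P\to Q$ is not finite, and properness of $P\to T$ does not follow from properties of $Q\to T$; note also that \ref{thm:tro_pic_tors_properties} does not assert properness of $\minCFG{\TroPic[r]}\to S$, so even the properness of $Q\to T$ is not available in the form you invoke it. Properness of $[r]_{\LogPic}$ comes from a cancellation between the non-properness of $\Pic^{\ul 0}_{X/S}[r]$ and the non-strict, stacky structure of $\minCFG{\TroPic[r]}$, and cannot be read off the two outer terms of the sequence separately.

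Second, your parenthetical claim that $Q$ is an algebraic space because the relative inertia of $[r]_{\TroPic}$ is trivial gets the phenomenon backwards. As a sheaf on $\cat{LogSch}$ it is of course a setoid, but its stack of minimal objects acquires $\mu_r$-stabilizers: this is \ref{eg:P1modGm} in action, made explicit in the remark and example following \ref{prop:trivial_stabilisers}. What is true --- and is the actual content of the words ``algebraic spaces'' in the first sentence of the theorem --- is that $\minCFG{\LogPic_{X/S}[r]}$ is a setoid. That is \ref{prop:trivial_stabilisers}, proved by showing that an automorphism $t$ fixing the underlying scheme and the structure map acts on each torsor $\alpha^{-1}m$ by a scalar $t_m$, and that descent of $\ca L^\vee\otimes t^*\ca L$ forces $t_m=1$ via the injection $\on{Hom}(H_1(\frak X),\bb G_m)\to \Pic^{\ul 0}_{X/S}$; here the line-bundle datum, which the tropical side lacks, is essential. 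You correctly flag this rigidification as the crux in your ``main obstacle'' paragraph, but you defer it; since it is needed already for representability by algebraic spaces (not only in the strict case), the proposal as written establishes neither half of the statement.
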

Proving these results will occupy the remainder of this section. Applications to compactifying spaces of roots of line bundles will be pursued in \ref{sec:compactifying_roots}. 
\subsection{The basic log structures ${\ghost}_{\gamma/r}$}
Fix a positive integer $r$. Let $X/S$ be a nuclear log curve in the sense of \cite[definition 3.39]{Holmes2020Models-of-Jacob} (essentially, $S$ is `small enough'), with base monoid $\ghost = \ghost_S(S)$, and let $\gamma \in H_1(\frak X)$. 
\begin{definition}\label{def:M_gamma_over_r}
We define ${\ghost}_{\gamma/r}$ to be the saturation of $\ghost$ inside
\begin{equation}
\mathsf N_\gamma \coloneqq \frac{\ghost^\gp \oplus H_1(\frak X)}{(\gamma \cdot \gamma', -r\gamma') : \gamma' \in H_1(\frak X))}, 
\end{equation}
and we define
\begin{equation}\label{eq:f_gamma_over_r_map}
f_{\gamma/r}\colon H_1(\frak X) \to {\ghost}_{\gamma/r}; \gamma' \mapsto (0, \gamma'). 
\end{equation}
\end{definition}
We verify that this definition depends only on the equivalence class of $\gamma$ in the quotient $H_1(\frak X)/rH_1(\frak X)$:
\begin{lemma}
Let $\gamma_1$, $\gamma_2 \in H_1(\frak X)$. 
\begin{enumerate}
\item
If $\gamma_1 - \gamma_2 \in rH_1(\frak X)$ then there is a unique isomorphism of monoids ${\ghost}_{\gamma_1/r} \to {\ghost}_{\gamma_2/r}$ which is the identity on $\ghost$, and it takes $f_{\gamma_1/r}$ to $f_{\gamma_2/r}$. 
\item
Suppose there exists an isomorphism of monoids ${\ghost}_{\gamma_1/r} \to {\ghost}_{\gamma_2/r}$ which is the identity on $\ghost$. Then for all $\gamma \in H_1(\frak X)$ we have $(\gamma_1 - \gamma_2)\cdot \gamma \in r\ghost_X$. 
\end{enumerate}
\end{lemma}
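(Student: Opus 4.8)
\emph{Approach.} I would treat both parts as explicit computations with the presentation of \ref{def:M_gamma_over_r}: write $\mathsf N_\gamma$ for the quotient of the free abelian group $\ghost^\gp \oplus H_1(\frak X)$ by the subgroup $R_\gamma$ generated by the elements $(\gamma\cdot\gamma',-r\gamma')$, $\gamma'\in H_1(\frak X)$, so that ${\ghost}_{\gamma/r}$ is the saturation of the image of $\ghost$ inside $\mathsf N_\gamma$, and use freely that $\ghost$ is fine and saturated.

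\emph{Part (1).} Write $\gamma_1-\gamma_2=r\beta$ with $\beta\in H_1(\frak X)$ and consider the ``shear'' automorphism $\Psi$ of $\ghost^\gp\oplus H_1(\frak X)$ given by $(m,n)\mapsto(m+\beta\cdot n,\,n)$. By bilinearity of the intersection pairing, $\Psi(\gamma_1\cdot\gamma',-r\gamma')=(\gamma_1\cdot\gamma'-r(\beta\cdot\gamma'),-r\gamma')=(\gamma_2\cdot\gamma',-r\gamma')$, so $\Psi$ carries $R_{\gamma_1}$ isomorphically onto $R_{\gamma_2}$ and descends to an isomorphism $\mathsf N_{\gamma_1}\iso\mathsf N_{\gamma_2}$ that is the identity on the image of $\ghost^\gp$; being the identity on $\ghost^\gp$ it carries the saturation of $\ghost$ in $\mathsf N_{\gamma_1}$ onto that in $\mathsf N_{\gamma_2}$, giving the desired isomorphism ${\ghost}_{\gamma_1/r}\to{\ghost}_{\gamma_2/r}$ which is the identity on $\ghost$. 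I would then check directly (adjusting $\Psi$ if necessary) that this isomorphism carries $f_{\gamma_1/r}$ to $f_{\gamma_2/r}$, using the relation $r\,f_{\gamma/r}(\gamma')=\gamma\cdot\gamma'$ valid in $\mathsf N_\gamma$ — after multiplying by $r$ both sides become $\gamma_2\cdot\gamma'$. Uniqueness then follows because ${\ghost}_{\gamma_i/r}$ is generated as a monoid by $\ghost$ together with the image of $f_{\gamma_i/r}$, so any monoid homomorphism out of it is pinned down by its restriction to $\ghost$ and its composite with $f_{\gamma_1/r}$, in the same spirit as \ref{lemma:unique_fact}.

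\emph{Part (2), and the main obstacle.} Here I am given only an abstract isomorphism $\psi\colon{\ghost}_{\gamma_1/r}\to{\ghost}_{\gamma_2/r}$ that is the identity on $\ghost$, with no a priori control over how it acts on the ``$H_1(\frak X)/rH_1(\frak X)$-part'', and must recover $(\gamma_1-\gamma_2)\cdot\gamma\in r\ghost_X$ for all $\gamma$. The plan is to groupify, so that $\psi$ becomes an isomorphism $\mathsf N_{\gamma_1}\iso\mathsf N_{\gamma_2}$ fixing $\ghost^\gp$ pointwise, and then extract numerical invariants. Applying $\on{Hom}(-,\bb Z)$ and reading off the presentation identifies $\on{Hom}(\mathsf N_\gamma,\bb Z)$ with $\{u\in\on{Hom}(\ghost^\gp,\bb Z):r\mid u(\gamma\cdot\gamma')\ \forall\,\gamma'\}$, and the existence of $\psi$ forces this subgroup to coincide for $\gamma_1$ and $\gamma_2$; equivalently one compares the central extensions $0\to\ghost^\gp\to\mathsf N_{\gamma_i}\to H_1(\frak X)/rH_1(\frak X)\to0$, whose classes are computed from the pairings $\gamma_i\cdot(-)$ modulo $r$. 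From this I would deduce that $(\gamma_1-\gamma_2)\cdot\gamma'$ is divisible by $r$ in $\ghost^\gp$, and finally promote `divisible in $\ghost^\gp$' to `in $r\ghost_X$' using that $\ghost_X$ is sharp and saturated together with a direct inspection of the intersection pairing. I expect the real difficulty to lie precisely here: the equality of these Hom-subgroups (or extension classes) is a priori weaker than the per-cycle statement, since it only sees the subgroup of $\ghost^\gp$ spanned by all the $(\gamma_1-\gamma_2)\cdot\gamma'$ at once, so one must additionally exploit the compatibility of $\psi$ with the induced automorphism of $H_1(\frak X)/rH_1(\frak X)$ to isolate each $(\gamma_1-\gamma_2)\cdot\gamma$ individually and then close the gap between $r\ghost_X$ and $r\ghost_X^\gp$.
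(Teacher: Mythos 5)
Your part (1) is essentially the paper's proof: the shear $(m,n)\mapsto(m+\beta\cdot n,n)$ is exactly the map $\phi$ used there, with the same verification that it carries the relation subgroup for $\gamma_1$ onto that for $\gamma_2$ and fixes $\ghost^\gp$. One small correction: for uniqueness you should not need the composite with $f_{\gamma_1/r}$ at all, since the claim is uniqueness among \emph{all} isomorphisms restricting to the identity on $\ghost$. The paper's argument is that $r\,\ghost_{\gamma_i/r}\subseteq\ghost$ (for $x=(m,\gamma')$ one has $rx=(rm+\gamma_i\cdot\gamma',0)\in\ghost^\gp\cap\ghost_{\gamma_i/r}=\ghost$ by saturation), so the inclusion $\ghost\hra\ghost_{\gamma_i/r}$ is Kummer and \ref{lemma:unique_fact} pins the map down from its restriction to $\ghost$ alone; your phrasing, which also invokes the composite with $f$, proves a weaker uniqueness than the one asserted.

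For part (2) you take a genuinely different route from the paper, and by your own diagnosis it does not close. The paper's proof is a direct element computation, not a comparison of $\on{Hom}(-,\bb Z)$ or of extension classes: for $\gamma\in H_1(\frak X)$ choose $m\in\ghost$ with $x=(m,\gamma)\in\ghost_{\gamma_1/r}$; then $rx=rm+\gamma_1\cdot\gamma$ lies in $\ghost$, hence is fixed by the isomorphism, and in $\mathsf N_{\gamma_2}$ this element reads $(rm+(\gamma_1-\gamma_2)\cdot\gamma,\,r\gamma)$; since it must stay divisible by $r$ in $\ghost_{\gamma_2/r}$, writing out what an $r$-th root of such an element looks like yields $(\gamma_1-\gamma_2)\cdot\gamma\in r\ghost^\gp$. (Your worry about upgrading from $r\ghost^\gp$ to $r\ghost_X$ is a non-issue: $(\gamma_1-\gamma_2)\cdot\gamma$ need not lie in $\ghost$ at all, and the conclusion should be read as divisibility in $\ghost^\gp$.) The genuine difficulty is exactly the one you isolate: this computation, and the statement itself, requires knowing that the isomorphism induces the identity (or at least fixes the class of $\gamma$) on $H_1(\frak X)/rH_1(\frak X)$, which is not formally implied by fixing $\ghost$; the paper's one-line proof takes this for granted. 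Your proposed remedy -- comparing the subgroups $\{u:r\mid u(\gamma_i\cdot\gamma')\ \forall\gamma'\}$ or the extension classes -- only recovers aggregate information about the span of all the $(\gamma_1-\gamma_2)\cdot\gamma'$ at once, as you correctly observe, and you offer no mechanism to isolate a single $\gamma$; so as written your part (2) is a description of the obstacle rather than a proof. The fix is to replace the $\on{Hom}$/$\on{Ext}$ bookkeeping by the element-trace above, supplemented by whatever normalisation of the induced map on $H_1(\frak X)/rH_1(\frak X)$ the intended reading of the lemma supplies (e.g.\ compatibility with the maps $f_{\gamma_i/r}$ of \ref{def:M_gamma_over_r}).
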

\begin{proof}
\begin{enumerate}
\item First we show the map exists. We write $\gamma_1 - \gamma_2 = r\gamma_0$, and define
\begin{equation}
\phi\colon \ghost^\gp \oplus H_1(\frak X)  \to \ghost^\gp \oplus H_1(\frak X); (m, \gamma) \mapsto (m + \gamma\cdot \gamma_0, \gamma). 
\end{equation}
This map evidently commutes with the inclusions of $\ghost$ and with the maps $f_{\gamma_i/r}$, and we compute
\begin{equation}
\phi(\gamma_1 \cdot \gamma, -r\gamma) = (\gamma_1 \cdot \gamma - r\gamma_r \cdot \gamma, -r\gamma) = (\gamma_2 \cdot \gamma, -r\gamma), 
\end{equation}
hence $\phi$ induces a map on the ${\ghost}_{\gamma_i/r}$. 
For the uniqueness of the map, it is enough to check this on the submonoids $r{\ghost}_{\gamma_i/r}$, but these submonoids are contained in $\ghost$, and we are assuming our map is the identity on $\ghost$. 
\item 
If such a map existed it would have to take $(rm, r\gamma)$ to $(rm + \gamma\cdot (\gamma_1 - \gamma_2), r\gamma)$, and the latter is divisible by $r$. 
\end{enumerate}
\end{proof}
\begin{lemma}\label{lem:inclusion_integral}
The inclusion $\ghost \to \ghost_{\gamma/r}$ is integral\footnote{See \cite[definition 4.6.2]{Ogus2018Lectures-on-log} for a definition of integral morphism of monoids.}. 
\end{lemma}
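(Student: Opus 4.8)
The plan is to verify the combinatorial criterion for integrality of a morphism of integral monoids directly. Write $\theta\colon\ghost\hookrightarrow\ghost_{\gamma/r}$ for the inclusion; by [loc.\ cit.] it is integral provided that whenever $a_1,a_2\in\ghost$ and $b_1,b_2\in\ghost_{\gamma/r}$ satisfy $a_1+b_1=a_2+b_2$, there exist $c_1,c_2\in\ghost$ and $b\in\ghost_{\gamma/r}$ with $b_i=c_i+b$ and $a_1+c_1=a_2+c_2$. Since $\theta$ is injective, the last equality is automatic once $b$ is chosen with $b_1-b,\,b_2-b\in\ghost$ (take $c_i:=b_i-b$), so it suffices to find a common lower bound $b\in\ghost_{\gamma/r}$ of $b_1,b_2$ for the partial order induced by $\ghost$. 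Equivalently — and perhaps more transparently — one shows that the monoid-algebra map $\k[\ghost]\to\k[\ghost_{\gamma/r}]$ is flat.

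Two structural remarks set up the computation. First, the relations defining $\mathsf N_\gamma$ give $r\,\mathsf N_\gamma\subseteq\ghost^{\gp}$ (since $r\cdot(0,\gamma')=(\gamma\cdot\gamma',0)$ there), and hence, using that $\ghost$ is saturated, $\ghost_{\gamma/r}=\{x\in\mathsf N_\gamma:rx\in\ghost\}$; in other words $\ghost_{\gamma/r}$ consists of those $x\in\mathsf N_\gamma$ whose image lies in the rational polyhedral cone $\sigma:=\on{Cone}(\ghost)\subseteq\ghost^{\gp}_{\bb Q}$. Second, two elements $b_1,b_2$ arising from a relation as above lie in a common coset of $\ghost^{\gp}$ in $\mathsf N_\gamma$, and the cosets of $\ghost^{\gp}$ in $\mathsf N_\gamma$ are indexed by $H_1(\frak X)/rH_1(\frak X)$, a coset being represented by $\tfrac1r$ times an integral combination of the pairings $\gamma\cdot\gamma'$.

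The core step is then to show that the $\ghost$-submodule $C:=(b_1+\ghost^{\gp})\cap\ghost_{\gamma/r}$ of the coset $b_1+\ghost^{\gp}$ is principal, say $C=b_*+\ghost$; then $b:=b_*$ works, as $b_1,b_2\in b_*+\ghost$ forces $b_i-b_*\in\ghost$. After translation this says that the lattice points $\{m\in\ghost^{\gp}:b_1+m\in\sigma\}$ of the translated cone $\sigma-b_1$ have a unique minimal element. For simplicial $\sigma$ this is immediate by rounding up coordinatewise, but for a general (non-simplicial) fs base monoid it is the crux of the argument and is where the geometry of the intersection pairing must be used: one can build finite overlattices of $\ghost^{\gp}$ for which the analogous inclusion is not integral, so the proof has to exploit that $\mathsf N_\gamma$ is assembled from the pairing and is not arbitrary. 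Concretely, I would use that the translation vector $b_1$ is $\tfrac1r$ of an integral combination of the $\gamma\cdot\gamma'$, each bounded by the length of $\gamma'$, and feed this (together with the nuclearity of $X/S$ and the combinatorics already isolated in the preceding lemma, which determines exactly when $(\gamma_1-\gamma_2)\cdot\gamma\in r\ghost_X$) into a proof that $\sigma-b_1$ cannot acquire two minimal lattice points. This last combinatorial input is the main obstacle.
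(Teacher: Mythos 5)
Your setup is correct, and it is essentially the paper's: you verify the element-wise criterion for integrality, you observe that $b_1$ and $b_2$ lie in a common coset of $\ghost^\gp$ in $\mathsf N_\gamma$ (the cosets being indexed by $H_1(\frak X)/rH_1(\frak X)$), and you correctly reduce everything to producing a single element $b$ of that coset with $b_1-b,\,b_2-b\in\ghost$. Your observation that $r\mathsf N_\gamma\subseteq\ghost^\gp$, so that $\ghost_{\gamma/r}=\{x\in\mathsf N_\gamma: rx\in\ghost\}$, is also correct and is genuinely used in the paper. But the argument then stops at exactly the point where the content lies: you declare that showing the translated cone $\sigma-b_1$ has a unique minimal lattice point is ``the main obstacle'' and only list ingredients (the intersection pairing, nuclearity, the previous lemma) that might be fed into it. As submitted, the proposal is therefore not a proof — the one nontrivial step is missing.

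The useful comparison is that the paper closes this step without ever confronting the simplicial-versus-general-cone issue you raise, because it does not round in cone coordinates: it splits along the $H_1/rH_1$-grading. Fix a basis $\gamma_1,\dots,\gamma_g$ of $H_1(\frak X)$; every $n\in\mathsf N_\gamma$ is uniquely $n=\alpha(n)+\sum_j a_j(n)\gamma_j$ with $\alpha(n)\in\ghost^\gp$ and $0\le a_j(n)<r$. The point is twofold: the integers $a_j(n)$ depend only on the coset of $n$ modulo $\ghost^\gp$, hence $a_j(n_1)=a_j(n_2)$ in the situation of the criterion; and for $n\in\ghost_{\gamma/r}$ the relation $rn\in\ghost$ together with saturation of $\ghost$ forces $\alpha(n)\in\ghost$. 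One then takes $n'=\sum_j a_j(n_1)\gamma_j$ and $m_i'=\alpha(n_i)$ and checks the criterion directly, using integrality of $\ghost$ for the final cancellation $m_1+m_1'=m_2+m_2'$. In particular you do not need the full ``unique minimal element of the coset intersected with the monoid'' statement that you set up as the target; a canonical common splitting within the coset suffices, and it comes for free from the basis of $H_1$ rather than from the geometry of $\on{Cone}(\ghost)$. If you want to complete your write-up, this decomposition (and the claim $\alpha(n)\in\ghost$) is precisely the missing lemma.
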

\begin{proof}
We  have to show the following: for every $m_1,m_2\in \ghost$ and $n_1,n_2\in \ghost_{\gamma/r}$ such that $m_1+n_1=m_2+n_2$ in $\ghost_{\gamma/r}$, there exists an $n'\in \ghost_{\gamma/r}$ and $m'_1,m'_2\in \ghost$ such that $n_i=n'+m'_i$ and $m_1+m'_1=m_2+m'_2$. 
Fix a basis $\gamma_1,\ldots,\gamma_g$ for $H_1(\frak X)$. From the exact sequence
\begin{equation}
0 \to \ghost^\gp \to \mathsf N_\gamma \to H_1(\frak X)/rH_1(\frak X)
\end{equation}
we deduce that for every element $n\in \mathsf N_\gamma$ there exists a unique element $\alpha(n)\in \ghost^\gp$ and integers $0\leq a_1(n),\ldots,a_g(n) < r$ such that $n=\alpha(n)+\sum_1^g a_j(n)\gamma_j$. Moreover if $n \in \ghost_{\gamma/r} \sub \mathsf N_\gamma$ then $rn \in \ghost$ so $\alpha(rn) = rn$, hence $\alpha(n) \in \ghost$ by saturation of $\ghost$. We write $\alpha\colon \ghost_{\gamma/r}\to \ghost$ and $a_i\colon \ghost_{\gamma/r}\to \{0,\ldots,r-1\}$ for the functions so constructed. 
Now let $m_1,m_2,n_1,n_2$ be as above. We have $a_j(n_1)=a_j(n_1+m_1)=a_j(n_2+m_2)=a_j(n_2)$ for all $0\leq j <r$. Let $n'=\sum a_j(n_1)\gamma_j$ and for $i=1,2$ let $m'_i=\alpha(n_i)$ so that we have $n_i=n'+m'_i$ which fulfils the first condition we needed to check. This implies moreover that $m_1+m'_1+n'=m_2+m'_2+n'$, hence $m_1+m'_1=m_2+m'_2$ by integrality of $\ghost$. 
\end{proof}
\subsection{Realising $\TroPic[r]$ as a root stack}
We write $\mathcal H_1:=\mathcal H_{1,X/S}$ for the tropical homology of $X/S$. From the natural exact sequence
\begin{equation}
0\to \ca H_1 \to \sheafhom(\ca H_1,\bb G_m^{trop})^{\dagger} \to \TroPic^0 \to 0
\end{equation}
of \ref{def:tro_jac} and the fact that the middle term has no torsion,
we obtain an exact sequence
\begin{equation}
\label{eqn:tropic_r} 0\to \TroPic[r] \to \frac{\ca H_1}{r\ca H_1} \to \frac{\sheafhom(\ca H_1,\bb G_m^{trop})^{\dagger}}{r \sheafhom(\ca H_1,\bb G_m^{trop})^{\dagger}}. 
\end{equation}
Denote by $\ghost$ the ghost sheaf of the log algebraic space $\mathcal H_1/r\mathcal H_1$. The tropical curve $\mathfrak X\times_S \mathcal H_1/r\mathcal H_1$ carries a tautological cycle $\gamma$ well-defined up to multiples of $r$, which determines a system of denominators $\ghost \to \ghost_{\gamma/r}$ on $\mathcal H_1/r\mathcal H_1$ (see \ref{def:system_of_denominators}). 
\begin{lemma}\label{lemma_minimal_monoid_integral}
The inclusion $\TroPic_{X/S}[r] \hookrightarrow \mathcal H_1/r\mathcal H_1$ is the root stack $\sqrt[j]{\ca H_1/r\ca H_1}$ with respect to the monoid extension $j\colon \ghost \to \ghost_{\gamma/r}$. 
\end{lemma}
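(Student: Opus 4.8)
The goal is to identify the subcategory $\TroPic_{X/S}[r] \hookrightarrow \ca H_1/r\ca H_1$ with the root stack $\sqrt[j]{\ca H_1/r\ca H_1}$ associated to the system of denominators $j\colon \ghost \to \ghost_{\gamma/r}$. By the functor-of-points characterisation of root stacks (\ref{def:root_stack_log}) together with the uniqueness of the factorisation (\ref{lemma:unique_fact}), both sides are full subcategories of $\CFGlog{\ca H_1/r\ca H_1}$, so it suffices to show that an object $t\colon T \to \ca H_1/r\ca H_1$ lies in $\TroPic_{X/S}[r]$ if and only if the map $t^{-1}\ghost \to \ghost_T$ factors through $t^{-1}j$. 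The plan is to compute both conditions explicitly over a geometric point (where everything reduces to monoid combinatorics) and then observe that the statement is \'etale-local on $T$, since both $\TroPic$, the tropical homology, and the formation of $\ghost_{\gamma/r}$ are compatible with strict \'etale base change; working over a nuclear log curve guarantees the base monoid $\ghost$ and the cycle $\gamma$ are globally defined.

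\emph{The ``only if'' direction.} Suppose $t$ corresponds to an $r$-torsion point of $\TroPic^0$. Unwinding \ref{eqn:tropic_r}, this means: $t$ is represented by a section of $\ca H_1/r\ca H_1$, i.e. a class $\bar\gamma = \gamma \bmod r\ca H_1$, together with the datum that the image of $\gamma$ under $\ca H_1 \to \sheafhom(\ca H_1, \bb G_m^\trop)^\dagger$ becomes divisible by $r$ in the target after passing to $\ghost_T$. Concretely, the image of $\gamma$ is the homomorphism $\gamma' \mapsto \gamma \cap \gamma' = \gamma'\cdot\gamma$, so the $r$-torsion condition says precisely that for every $\gamma' \in H_1(\frak X)$ there exists $m_{\gamma'} \in \ghost_T^\gp$ with $\gamma \cdot \gamma' = r\, m_{\gamma'}$, and moreover (bounded monodromy, automatic for torsion) we can take the $m_{\gamma'}$ to lie in $\ghost_T$ after saturating. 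This is exactly the data needed to define a monoid map $\mathsf N_\gamma = \frac{\ghost^\gp \oplus H_1(\frak X)}{(\gamma\cdot\gamma', -r\gamma')} \to \ghost_T^\gp$ extending the structure map $\ghost^\gp \to \ghost_T^\gp$ (send $\gamma' \mapsto -m_{\gamma'}$); since $\ghost_T$ is saturated, this restricts to a map $\ghost_{\gamma/r} \to \ghost_T$, giving the required factorisation $t^{-1}j\colon \ghost \to \ghost_{\gamma/r} \to \ghost_T$. One must check this map is well-defined (independent of the choices of $m_{\gamma'}$) using integrality and sharpness of $\ghost_T$, and compatible with $f_{\gamma/r}$ of \cref{eq:f_gamma_over_r_map}; the key technical input here is \ref{lem:inclusion_integral}, ensuring $\ghost \to \ghost_{\gamma/r}$ is integral so the pushout behaves well.

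\emph{The ``if'' direction.} Conversely, given a factorisation $t^{-1}\ghost \to \ghost_{\gamma/r} \to \ghost_T$, composing with the image of $\gamma' \mapsto (0,\gamma') = f_{\gamma/r}(\gamma')$ produces, for each $\gamma'$, an element of $\ghost_T$ whose $r$-th multiple equals the image of $\gamma \cdot \gamma'$; this is precisely the witness that $\gamma$ maps to an $r$-torsion element of $\TroPic^0$ over $T$, i.e. $t$ factors through $\TroPic_{X/S}[r]$. Finally I would match up the \emph{minimal} objects on both sides: by \ref{def:root_stack_minimal} the minimal objects of $\sqrt[j]{\ca H_1/r\ca H_1}$ are those $t$ for which $\ghost_T \cong t^*\ghost_{\gamma/r}$, and one checks this agrees with the minimal objects of $\TroPic[r]$ coming from Gillam's machinery; the monoid $\ghost_{\gamma/r}$ was engineered in \ref{def:M_gamma_over_r} to be exactly the universal such receiving monoid, so this is essentially a restatement of its defining universal property. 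The main obstacle I anticipate is the careful bookkeeping around the bounded-monodromy subsheaf $\sheafhom(-,-)^\dagger$ versus the full $\sheafhom$, and verifying that passing to the saturation $\ghost_{\gamma/r} \subseteq \mathsf N_\gamma$ exactly matches the saturated/integral structure of $\ghost_T$ — in other words, that no spurious torsion is introduced or lost when comparing the quotient $\mathsf N_\gamma$ with the actual $r$-division data available in an fs log scheme; the integrality lemma \ref{lem:inclusion_integral} and saturatedness of fs monoids are what make this go through.
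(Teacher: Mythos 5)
Your proposal is correct and follows essentially the same route as the paper: both reduce to comparing functors of points on $\cat{LogSch}$, unwind the exact sequence \ref{eqn:tropic_r} to identify $\TroPic[r](T)$ with the locus where $\cap\,\gamma$ is divisible by $r$, and then translate that divisibility into the factorisation of $\ghost \to \ghost_T$ through $\ghost_{\gamma/r}$ required by \ref{def:root_stack_log}. The paper carries out the last step via the base-change square for the $\ghost_{\gamma/r}$-construction rather than building the factorisation by hand from the witnesses $m_{\gamma'}$, and it does not need your closing discussion of minimal objects, but these are presentational differences only.
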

The reader concerned about how a root stack can be an inclusion may be reassured by recalling from \ref{sec:root_stack} that root stacks are log monomorphisms. 
\begin{proof}
From the exact sequence \ref{eqn:tropic_r} we see that for every $T\to S$ \[\TroPic[r](T)=\{\gamma\in \mathcal H_1/r\mathcal H_1(T) \mbox{ such that }\cap \gamma \mbox{ is divisible by } r\}. \]
However $\cap \gamma$ is divisible by $r$ if and only if $\ghost_S\to \ghost_{S,\gamma/r}$ is an isomorphism. The diagram
\begin{center}
\begin{tikzcd}
\ghost \ar[r] \ar[d] & \ghost_S \ar[d,"\cong"] \\
\ghost_{\gamma/r} \ar[r] & \ghost_{S,\gamma/r}
\end{tikzcd}
\end{center}
shows that this is in turn equivalent to the map of monoids $\ghost\to \ghost_S$ factoring via $\ghost_{\gamma/r}$. So we find that  
\[\TroPic[r](T)=\{\gamma\colon T\to \mathcal H_1/r\mathcal H_1 \mbox{ such that } \ghost \to \ghost_S \mbox{ factors via }\ghost_{\gamma/r}\}\]
which is exactly the desired root stack. 
\end{proof}
\begin{theorem}\label{thm:tro_pic_tors_properties}
The structure map $\minCFG{\TroPic[r]} \to S$ is:
\begin{enumerate}
\item
Relatively representable by algebraic stacks with log structure;
\item Finitely presented, log flat, flat and quasi-finite. 
\item If log \'etale If $r$ is invertible on $S$;
\item Is an isomorphism over the open locus in $S$ over which $X$ is of compact type. 
\end{enumerate}
\end{theorem}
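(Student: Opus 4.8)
The strategy is to deduce everything from the identification $\minCFG{\TroPic[r]} = \minCFG{\sqrt[j]{\ca H_1/r\ca H_1}}$ established in \ref{lemma_minimal_monoid_integral}, combined with the root-stack representability results of \ref{lemma_rootstack_representable} and the structural lemmas on $\ghost_{\gamma/r}$. First I would observe that the composite $\minCFG{\TroPic[r]} \to \ca H_1/r\ca H_1 \to S$ factors the structure map, and that $\ca H_1/r\ca H_1 \to S$ is locally quasi-finite, \'etale and finitely presented since $\ca H_1$ is (by its construction in \cite{Holmes2020Models-of-Jacob}, recalled in \ref{sec:log_curves}) and quotienting by $r$ preserves these properties. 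So it suffices to prove the four assertions for the root stack map $\minCFG{\sqrt[j]{\ca H_1/r\ca H_1}} \to \ca H_1/r\ca H_1$ and then compose. Claim (1) and the representability part are immediate from \ref{lemma_rootstack_representable}, once we check $j\colon \ghost \to \ghost_{\gamma/r}$ is a system of denominators in the sense of \ref{def:system_of_denominators}; this is exactly the content of the saturation clause in \ref{def:M_gamma_over_r} together with the observation that $rn \in \ghost$ for every $n \in \ghost_{\gamma/r}$, which was already used in the proof of \ref{lem:inclusion_integral}.

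For (2), finite presentation and quasi-finiteness come from \ref{lemma_rootstack_representable}. Log flatness of the root stack over its base is standard (a root stack along a system of denominators is log flat — indeed log smooth after inverting the relevant torsion orders, but log flat in general). The genuinely new input is \emph{flatness} of the underlying morphism of algebraic stacks, and this is precisely where \ref{lem:inclusion_integral} enters: by Kato's criterion, a log flat morphism whose underlying monoid maps are integral is flat on underlying schemes (this is the reason integrality of $\ghost \to \ghost_{\gamma/r}$ was proved). Combining log flatness of $\minCFG{\sqrt[j]{-}} \to \ca H_1/r\ca H_1$ with integrality of $j$ gives flatness; composing with the flat $\ca H_1/r\ca H_1 \to S$ finishes (2).

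For (3): if $r$ is invertible on $S$, then $R^{\gp}/\ghost^{\gp} = \ghost_{\gamma/r}^{\gp}/\ghost^{\gp}$, which is a quotient of $H_1(\frak X)/rH_1(\frak X)$, has order prime to all residue characteristics, so by the second clause of \ref{lemma_rootstack_representable} the root stack is Deligne--Mumford; moreover a root stack along a system of denominators of order invertible on the base is log \'etale, hence the composite is log \'etale. For (4): over the compact-type locus every geometric fibre of $X/S$ is a tree of curves, so $H_1(\frak X_{\overline s}) = 0$, whence $\ca H_1 = 0$ there, so $\ca H_1/r\ca H_1 \to S$ is an isomorphism and $\ghost_{\gamma/r} = \ghost$ (the defining quotient and saturation are trivial), so the root stack is the identity; thus $\minCFG{\TroPic[r]} \to S$ is an isomorphism over this locus. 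The main obstacle I anticipate is assembling (2) cleanly: one must be careful that ``log flat'' for the root-stack map is genuinely available in this fs, non-fine-enough setting and that Kato's flatness criterion applies to a morphism that is only relatively representable by algebraic \emph{stacks} rather than schemes — this likely requires passing to a strict \'etale chart where $\ca H_1/r\ca H_1$ and the system of denominators become honest, and checking the criterion there, then descending.
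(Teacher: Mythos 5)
Your proposal is correct and follows essentially the same route as the paper: factor the map through the root stack $\minCFG{\sqrt[j]{\ca H_1/r\ca H_1}}\to\ca H_1/r\ca H_1\to S$, get (1) and most of (2) from \ref{lemma_rootstack_representable}, reduce flatness to integrality of $\ghost\to\ghost_{\gamma/r}$ (\ref{lem:inclusion_integral}), deduce (3) from invertibility of the order of the cokernel of the groupified monoid map, and deduce (4) from the vanishing of $\ca H_1$ on the compact-type locus. The only cosmetic difference is that you invoke Kato's ``log flat plus integral implies flat'' criterion, whereas the paper passes to a chart $P\to Q$ via \cite[Proposition 4.13]{Borne2012Parabolic-sheav} and checks flatness of $\Spec\bb Z[P]\to\Spec\bb Z[Q]$ directly using \cite[Prop.\ 4.6.7]{Ogus2018Lectures-on-log} --- the same underlying fact, and your remark about needing strict \'etale charts is exactly how the paper handles it.
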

\begin{proof}
The map is the composition of the strict \'etale morphism $\ca H_1/r\ca H_1\to S$  with the root stack $\minCFG{\TroPic[r]}=\minCFG{\sqrt[j]{\ca H_1/r\ca H_1}}\to \ca H_1/r\ca H_1$. All properties of parts (1) and (2) follow from \ref{lemma_rootstack_representable}, except flatness.
To prove flatness, we refer to \cite{Borne2012Parabolic-sheav}, paragraph above Proposition 4.13: let $P\to Q$ be an \'etale-local chart for the system of deniminators $j\colon \ghost\to \ghost_{\gamma/r}$ of \ref{lemma_minimal_monoid_integral}. The tori $\widehat{P}=Hom(P^{gp},\bb G_m)$ and $\widehat{Q}=Hom(Q^{gp},\bb G_m)$ act naturally on $\Spec \bb Z[P]$ and $\Spec \bb Z[Q]$ respectively and the actions make the map $\Spec \bb Z[Q]\to \Spec \bb Z[P]$ equivariant. The log structure on $\ca H_1/r\ca H_1$ induces a morphism from $\ca H_1/r\ca H_1$ to the quotient stack $[\Spec \bb Z[P]/\widehat{P}]$. Now \cite[Proposition 4.13]{Borne2012Parabolic-sheav} tells us that $\minCFG{\TroPic[r]}$ is the fibre product $\ca H_1/r\ca H_1\times_{[\Spec \bb Z[P]/\widehat{P}]}[\Spec \bb Z[Q]/\widehat{Q}]$. To prove flatness of $\minCFG{\TroPic[r]} \to \ca H_1/r\ca H_1$ it suffices therefore to prove flatness of $\Spec \bb Z[Q]\to \Spec \bb Z[P]$. This follows from \cite[Prop. 4.6.7]{Ogus2018Lectures-on-log} and \ref{lem:inclusion_integral}. 
Part (3) is proven in the same way as flatness, observing that the map $P^{gp}\to Q^{gp}$ has cokernel of order invertible on $S$ and $\Spec \bb Z[Q]\to \Spec \bb Z[P]$ is therefore log \'etale.
Finally part (4) follows from the fact that $\TroPic^0_{X/S}$ vanishes on the locus where $\ca H_1=0$, which is exactly the locus where $X/S$ is of compact-type.
\end{proof}
\subsection{On the underlying algebraic stack of the torsion of LogPic}\label{sec:underlying_stack_of_LogPic_torsion}

\begin{lemma}
The minimal objects of $\LogPic_{X/S}$ are exactly the objects which map to minimal objects of $\TroPic_{X/S}$, and $\LogPic_{X/S}$ is representable by an algebraic stack with log structure. 
\end{lemma}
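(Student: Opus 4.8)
The plan is to deduce both assertions from the exact sequence \cref{SES:pic_log_tro} together with Gillam's machinery as summarised in \cref{lem:strict_preserves_minimal_objects} and \cref{strict_over_minCFG_has_minCFG}. The key observation is that the map $\LogPic_{X/S} \to \TroPic_{X/S}$ is, by \cref{SES:pic_log_tro}, a torsor under the relative Picard space $\Pic^{\ul 0}_{X/S}$, which is a strict algebraic space over $S$. In particular this map is representable, strict, and has trivial relative inertia (it is relatively a $\Pic^{\ul 0}_{X/S}$-torsor, hence relatively a sheaf). So the hypotheses preceding \cref{lem:strict_preserves_minimal_objects} and \cref{strict_over_minCFG_has_minCFG} are satisfied with $\phi\colon X = \LogPic_{X/S} \to Y = \TroPic_{X/S}$: for any minimal log scheme $T$ with a map $\Phi(T) \to \TroPic_{X/S}$, the fibre product $\LogPic_{X/S} \times_{\TroPic_{X/S}} \Phi(T)$ is a $\Pic^{\ul 0}_{X/S}$-torsor over $\Phi(T)$, hence in the essential image of $\Phi$ and strict over $\Phi(T)$.

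Granting this, the first assertion is immediate from \cref{lem:strict_preserves_minimal_objects}: the minimal objects of $\LogPic_{X/S}$ are precisely those mapping to minimal objects of $\TroPic_{X/S}$. For the second assertion, I would first invoke the (already-cited) results of Molcho--Wise to know that $\TroPic_{X/S}$ lies in the essential image of $\Phi$ — equivalently, that it has enough minimal objects. Actually one should be slightly careful here: what we really need is that $\TroPic_{X/S}$, or at least the relevant part of it, is representable by an algebraic stack with log structure; this follows from the root-stack description. Concretely, one can argue locally on $S$: over the (strict \'etale) locus where $X/S$ is nuclear, $\TroPic^0_{X/S}[r]$ is the root stack $\minCFG{\sqrt[j]{\ca H_1/r\ca H_1}}$ of \cref{lemma_minimal_monoid_integral}, which is an algebraic stack with log structure by \cref{thm:tro_pic_tors_properties}(1); and the non-torsion (i.e. higher-degree, and infinite-order) parts of $\TroPic_{X/S}$ are handled by the same root-stack construction applied to the full $\ca H_1$ rather than its torsion, together with the fact that translation by the (discrete) Néron component group is an isomorphism. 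Then \cref{strict_over_minCFG_has_minCFG} applied to $\phi$ shows $\LogPic_{X/S}$ lies in the essential image of $\Phi$, and since being an algebraic stack with log structure can be checked after the strict representable morphism to $\TroPic_{X/S}$ (the fibres being algebraic spaces, namely $\Pic^{\ul 0}_{X/S}$-torsors), we conclude that $\minCFG{\LogPic_{X/S}}$ is an algebraic stack with log structure.

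The main obstacle I anticipate is the verification that $\TroPic_{X/S}$ (as opposed to just its $r$-torsion, or its degree-zero nuclear part) genuinely lies in the essential image of $\Phi$ and is an algebraic stack with log structure, in a way that is compatible with the strict-\'etale-local root-stack descriptions and glues over all of $S$. The statements in \cref{sec:log_curves} record that $\ca H_{1,X/S}$ is locally quasi-finite \'etale over $S$ but far from separated, so one must be careful that the quotient defining $\TroPic_{X/S}$ behaves well; the cleanest route is probably to reduce to the nuclear case by a strict \'etale cover of $S$ (as permitted since all the notions involved are strict-\'etale-local), apply the root-stack results of \cref{thm:tro_pic_tors_properties} there, and then note that the resulting algebraicity descends. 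A secondary technical point is checking that $\Pic^{\ul 0}_{X/S} \to S$ is indeed a (strict, smooth) algebraic space — but this is standard for prestable curves and multidegree-zero line bundles, so I do not expect difficulty there.
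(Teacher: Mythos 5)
Your proposal is correct and follows essentially the same route as the paper: the paper's entire proof is the one-line citation ``Immediate from \ref{lem:strict_preserves_minimal_objects,strict_over_minCFG_has_minCFG}'', and your argument is precisely the expansion of that, verifying the hypotheses of those two lemmas (strictness, representability, trivial relative inertia) via the $\Pic^{\ul 0}_{X/S}$-torsor structure coming from the exact sequence \ref{SES:pic_log_tro}, with the algebraicity of $\TroPic_{X/S}$ taken as input from Molcho--Wise. The additional care you take over whether all of $\TroPic_{X/S}$ (not just its torsion or degree-zero part) lies in the essential image of $\Phi$ is a reasonable point that the paper leaves implicit, but it does not change the structure of the argument.
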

\begin{proof}
Immediate from \ref{lem:strict_preserves_minimal_objects,strict_over_minCFG_has_minCFG}. 
\end{proof}

Let $T / S$ be a log scheme. Perhaps replacing $T$ by a strict \'etale cover, we may assume that $C_T/T$ admits a section and hence that any map $T \to \LogPic_{C/S}$ can be represented by a log line bundle on $T$. Then a map $T \to \LogPic_{X/S}[r]$ can be represented by a log line bundle $\ca L$ on $X_T$ such that $\ca L^{\otimes r}$ descends to a line bundle\footnote{It might seem more natural to write `log line bundle on $T$' here. But this is the strict-$S$-etale sheafification of the condition that $\ca L^{\otimes r}$ be trivial. } on $T$. A morphism $(T/S, \ca L) \to (T'/S, \ca L')$ consists of a morphism of log schemes $f\colon T \to T'$ over $X$, such that $\ca L^\vee \otimes f^*\ca L'$ descends to a line bundle on $T$. 
The exact sequence 
\begin{equation}
1 \to \Pic_{X/S}^{\ul 0} \to \LogPic_{X/S} \to \TroPic_{X/S} \to 1
\end{equation}
and the divisibility of $\Pic_{X/S}^{\ul 0}$ in the strict flat topology gives an exact sequence of sheaves in the strict flat topology on $\LogSch_S$:
\begin{equation}
1 \to \Pic_{X/S}^{\ul 0}[r] \to \LogPic_{X/S}[r] \to \TroPic_{X/S}[r] \to 1. 
\end{equation}
This shows that $\LogPic_{X/S}[r]$ is also representable by an algebraic stack with log structure, and that its minimal objects are exactly those that map to minimal objects of $\TroPic[r]$. However, there is an important difference between $\TroPic_{X/S}[r]$ and $\LogPic_{X/S}[r]$: the underlying algebraic stack of the latter is actually an algebraic \emph{space}, i.e. has trivial stabilisers (cf. \ref{eg:P1modGm}), as we now show. 
\begin{proposition}\label{prop:trivial_stabilisers}
The stack $\minCFG{\LogPic_{X/S}[r]}$ is equivalent over $\cat{Sch}_S$ to a category fibred in \emph{setoids} over $\cat{Sch}_S$. 
\end{proposition}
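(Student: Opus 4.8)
The plan is to show that every object of $\minCFG{\LogPic_{X/S}[r]}$ has trivial automorphism group; since a category fibred in groupoids is fibred in setoids exactly when all its automorphism groups are trivial, this gives the proposition. Unwinding Gillam's description, an object over a scheme $\ul T\to\ul S$ is a pair $(\M_T,\ca L)$ with $\M_T$ a minimal log structure on $\ul T$ and $\ca L$ a log line bundle on $X_T:=X\times_S(\ul T,\M_T)$ with $\ca L^{\otimes r}$ descending strict-\'etale-locally to $\ul T$; and since $\LogPic_{X/S}[r]$ is fibred in setoids over $\cat{LogSch}$, an automorphism of this object over $\mathrm{id}_{\ul T}$ is precisely a \emph{vertical} automorphism $\sigma$ of $(\ul T,\M_T)$ over $S$ (the identity on $\ul T$ and on the image of $\M_S$) such that $\sigma^*\ca L\cong\ca L$. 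I would carry this out uniformly in $\ul T$, so no reduction to geometric points is needed --- which matters, since without an invertibility hypothesis on $r$ the vanishing of geometric automorphism groups would not by itself give the claim.

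First I would show $\bar\sigma=\mathrm{id}$ on $\ghost_T$. Write $\bar\ell\colon\ca H_{1,X/S}\to\bb G_m^\trop$, equivalently $\bar\ell\colon H_1(\frak X)\to\ghost_T^\gp$, for the tropicalisation of $\ca L$; recall $H^1(X_T,\bb G_m^\trop)=\on{Hom}(H_1(\frak X),\ghost_T^\gp)$ as in \ref{sec:log_curves}. Tropicalising the isomorphism $\sigma^*\ca L\cong\ca L$ gives $\bar\sigma^\gp\circ\bar\ell=\bar\ell$, so $\bar\sigma^\gp$ fixes $\on{im}(\bar\ell)$ pointwise, and it fixes $\ghost_S^\gp$ by verticality. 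Here minimality enters: the image of $(\M_T,\ca L)$ in $\minCFG{\TroPic_{X/S}[r]}$ is a minimal object of $\TroPic_{X/S}[r]=\sqrt[j]{\ca H_1/r\ca H_1}$, so by \ref{def:root_stack_minimal}, \ref{lemma_minimal_monoid_integral} and the presentation of \ref{def:M_gamma_over_r} we have $\ghost_T\cong\ghost_{\gamma_0/r}$ for the tautological class $\gamma_0$ of $\ca L$, the tropical class of $\ca L$ represents $f_{\gamma_0/r}$, and from the exact sequence $0\to\ghost^\gp\to\mathsf N_{\gamma_0}\to H_1(\frak X)/rH_1(\frak X)\to 0$ one reads off that $\bar\ell$ induces a \emph{surjection} $H_1(\frak X)\twoheadrightarrow\ghost_T^\gp/\ghost_S^\gp$. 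Hence $\bar\sigma^\gp$ is the identity on a generating set of $\ghost_T^\gp$, so $\bar\sigma^\gp=\mathrm{id}$ and therefore $\bar\sigma=\mathrm{id}$ ($\ghost_T$ being integral).

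It then remains to treat a vertical $\sigma$ with $\bar\sigma=\mathrm{id}$: such a $\sigma$ has the shape $m\mapsto m\cdot u(\bar m)$ for a homomorphism $u\colon\ghost_T^\gp\to\ca O_{\ul T}^\times$ trivial on $\ghost_S^\gp$, and I must show $u=1$. A \v{C}ech computation with the induced automorphism of $\M_{X_T}^\gp$ gives $\sigma^*\ca L\otimes\ca L^\vee = u_*\bar{\ca L}$, the image of $\bar{\ca L}$ under the pushforward $\ghost_{X_T}^\gp\to\ca O_{X_T}^\times$ induced by $\sigma$; since $\bar{\ca L}$ comes from $H^1(\frak X,\ghost_T^\gp)$ and $\sigma$ fixes the log structure pulled back from $X$ (in particular the local generators at the nodes), this image is $u\circ\bar\ell$ inside $\on{Hom}(H_1(\frak X),\ca O_{\ul T}^\times)\hookrightarrow\Pic(X_T)$, the last map being injective because $\on{Hom}(H_1(\frak X),\bb G_m)$ is the torus part of $\Pic^{\ul 0}_{X/S}$. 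So $\sigma^*\ca L\cong\ca L$ forces $u\circ\bar\ell=1$, and since $\bar\ell$ surjects onto $\ghost_T^\gp/\ghost_S^\gp$ while $u$ kills $\ghost_S^\gp$, we get $u=1$, i.e.\ $\sigma=\mathrm{id}$. Conceptually this is the rigidification foreshadowed by \ref{eg:P1modGm}: the stacky inertia that $\minCFG{\TroPic_{X/S}[r]}$ carries acts freely on the $\Pic^{\ul 0}_{X/S}$-fibres of $\LogPic_{X/S}[r]\to\TroPic_{X/S}[r]$, because on the torus part of those fibres the action is translation by $u\circ\bar\ell$ and $\bar\ell$ spans $\ghost_T^\gp/\ghost_S^\gp$.

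I expect the main obstacle to be the second paragraph: making precise that for a \emph{minimal} object the tropicalisation $\bar\ell$ of $\ca L$ surjects onto $\ghost_T^\gp/\ghost_S^\gp$. This needs a careful comparison of the two descriptions of the minimal objects of $\TroPic_{X/S}[r]$ --- the intrinsic Gillam-minimal one and the root-stack presentation $\sqrt[j]{\ca H_1/r\ca H_1}$ of \ref{lemma_minimal_monoid_integral} --- and tracking the tautological cycle $\gamma_0$ through to the actual monodromy of $\ca L$, using the positivity of the edge lengths of $\frak X$ (which controls the monoid $\ghost_{\gamma_0/r}$). Secondary care is needed for the \v{C}ech identification of $\sigma^*\ca L\otimes\ca L^\vee$ with $u\circ\bar\ell$ (compatible trivialisations near the nodes) and for the fact that the torus part $\on{Hom}(H_1(\frak X),\bb G_m)$ is a closed subgroup of $\Pic^{\ul 0}_{X/S}$ in families with locally constant dual graph; both are routine on the nuclear charts and follow the lines of the arguments already used in \ref{sec:underlying_stack_of_LogPic_torsion}.
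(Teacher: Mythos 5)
Your argument is correct and follows essentially the same route as the paper's proof: identify an automorphism over $\mathrm{id}_{\ul T}$ with a homomorphism $u\colon \ghost_T^{\gp}\to \bb G_m$ killing $\ghost_S^{\gp}$, observe that the twist $\sigma^*\ca L\otimes \ca L^\vee$ is the image of $u\circ f_{\gamma_0/r}$ under $\on{Hom}(H_1(\frak X),\bb G_m)\hookrightarrow \Pic^{\ul 0}_{X/S}$, and use that the minimal monoid $\ghost_T=\ghost_{\gamma_0/r}$ is generated by the image of $\ghost_S$ together with the image of $H_1(\frak X)$. The only (harmless) divergence is your first step: the paper obtains $\bar\sigma=\mathrm{id}$ directly from verticality plus the fact that $r\ghost_T$ lies in the image of $\ghost_S$ (arguing as in \ref{lemma:unique_fact}), so the surjectivity of the tropicalisation of $\ca L$ onto $\ghost_T^{\gp}/\ghost_S^{\gp}$ that you flag as the main obstacle is only needed for the second step, where it amounts to the generation statement just quoted.
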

\begin{proof}
Let $(T/S, \ca L)$ be a minimal object of $\LogPic_{X/S}[r]$. The automorphisms of this object over the identity on $\ul T$ in $\cat{Sch}_S$ are morphisms of log schemes $t:T \to T$ satisfying:
\begin{enumerate}
\item $t$ is the identity on the underlying scheme $\ul T$;
\item $t$ commutes with the structure map $\M_S \to \M_T$; 
\item the log line bundle $\ca L^\vee \otimes t^*\ca L$ on $X_T$ descends to a line bundle on $T$. 
\end{enumerate}
We will show that these conditions together force $t$ to be the identity on $T$. 
Now, since $(T, \ca L)$ is minimal over $X$, the ghost sheaf $\ghost_T$ is given by $(t^*\ghost_S)_{\gamma_0/r}$ for some $\gamma_0 \in H_1(\frak X)$. In particular for every $m \in \ghost_T$ we have that $rm$ lies in the image of $\ghost_S \to \ghost_T$. This implies by point (2) that $\overline{t}\colon \ghost_T \to \ghost_T$ is the identity. 
Given $m \in \ghost_T$, we write $\alpha^{-1}m$ for the preimage in $\M_T$, a $\bb G_m$-torsor. Since $t$ acts trivially on $\ghost_T$ it therefore acts on each such torsor; we denote the action by
\begin{equation}
t_m\colon \alpha^{-1}m \to \alpha^{-1}m. 
\end{equation}
We will identify $t_m$ with the unique element of $\bb G_m$ such that the action is multiplication by that element. Condition (2) implies that $t_m = 1$ for every $m \in \ghost_T$ which lies in the image of $\ghost_S \to \ghost_T$. 
Since $t$ acts trivially on $\ghost_S$ we see that the log line bundle $\ca L^\vee \otimes t^*\ca L$ is trivial tropically. This says exactly that it comes from a line bundle of multidegree $\ul 0$ on $X$. To identify this line bundle, let 
\begin{equation}
f: H_1(\frak X) \to {t^*\ghost_S}_{\gamma/r} = \ghost_T; \gamma' \mapsto (0, \gamma')
\end{equation}
be the map of of \ref{eq:f_gamma_over_r_map}, representing the tropicalisation of $\ca L$. We define a map 
\begin{equation}
\phi\colon H_1(\frak X) \to \bb G_m; \gamma \mapsto t_{f(\gamma)}. 
\end{equation}
Then the class of $\ca L^\vee \otimes t^*\ca L$ is given by the image of $\phi$ under the natural injective map 
\begin{equation}
\on{Hom}(H_1(\frak X), \bb G_m) \to \pic^{\ul 0}_{X/S}. 
\end{equation}
In particular, the line bundle $\ca L^\vee \otimes t^*\ca L$ descends to $S$ if and only if $t_{f(\gamma)} = 1$ for every $\gamma \in H_1(\frak X)$. 
Now $\ghost_T = (t^*\ghost_S)_{\gamma_0/r}$ is generated by the image of $\ghost_S$ together with the images of elements of $H_1(\frak X)$ under the map $f$, so we see that $t_m = 1$ for all $m \in \ghost_T$, hence $t$ is the identity. 
\end{proof}
\begin{remark}
In the setup of the proof of the previous proposition, an automorphism $t\colon  T \to T$ satisfying only conditions (1) and (2) of the above proof need \emph{not} be trivial; in this situation the elements $t_m$ need only satisfy $(t_m)^r=1$. This is why $\minCFG{\TroPic_{X/S}[r]}$ is only an algebraic stack with log structure, not an algebraic space; it may admit finite non-trivial stabilizers of order dividing $r$. 
\end{remark}
\begin{example}
Suppose that $S = T = \Spec k$ is a geometric point, with $\ghost_S = \bb N$. Suppose that $X/S$ consists of an irreducible genus 0 curve with a single node of length $1$. The tropical curve $\frak X$ consists of a vertex with a single loop of length $1$; hence $\ca H_1/r\ca H_1$ is the constant group scheme $\bb Z/r\bb Z$ with strict log structure. Suppose for simplicity that $r$ is a prime number; the root stack $\minCFG{\TroPic_{X/S}[r]}\to \ca H_1/r\ca H_1$ looks as follows: it is the identity over the zero section; while over each non-zero $\gamma\in \bb Z/r\bb Z$ it is given by $(B\mu_r,\bb N)\to (\Spec k,\bb N)$, where the map on ghost sheaves is multiplication by $r$. On the other hand, the map $$\minCFG{\LogPic_{X/S}[r]} \to \minCFG{\TroPic_{X/S}[r]}$$ is the identity over the zero section and is the strict map $$(\Spec k, \bb N)\to (B\mu_r, \bb N)$$ on the remaining $r-1$ sections. 
\end{example}
\begin{corollary}\label{cor:LogPicr}
The structure map $\minCFG{\LogPic_{X/S}[r]} \to S$ is representable by proper flat quasi-finite algebraic spaces with log structure. If $\minCFG{\TroPic_{X/S}[r]} \to S$ is strict then the underlying algebraic space of $\minCFG{\LogPic_{X/S}[r]} \to S$ is \'etale and admits a natural group structure. 
\end{corollary}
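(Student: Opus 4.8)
The plan is to deduce \Cref{cor:LogPicr} from the structural results already assembled, reducing everything to statements about $\minCFG{\TroPic_{X/S}[r]}$ via the short exact sequence
\begin{equation*}
1 \to \Pic_{X/S}^{\ul 0}[r] \to \LogPic_{X/S}[r] \to \TroPic_{X/S}[r] \to 1.
\end{equation*}
First I would establish that $\Pic^{\ul 0}_{X/S}[r] \to S$ is representable by proper, flat, quasi-finite group schemes: the prestable Picard space $\Pic^{\ul 0}_{X/S}$ is a separated smooth group algebraic space of relative dimension $g$ (even a semiabelian-by-the-usual-argument), and multiplication by $r$ on such a group is finite flat of degree $r^{2g}$; quasi-finiteness and properness are then clear, and flatness follows from ``miracle flatness''/the fibrewise criterion since the fibres are finite of constant rank. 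Combining this with \Cref{thm:tro_pic_tors_properties}, which gives that $\minCFG{\TroPic_{X/S}[r]} \to S$ is finitely presented, log flat, flat and quasi-finite (and that it is an algebraic stack with log structure, proper and quasi-finite by the root-stack description and \Cref{lemma_rootstack_representable}), the extension $\minCFG{\LogPic_{X/S}[r]} \to S$ inherits all of these: representability by algebraic spaces (rather than stacks) is exactly \Cref{prop:trivial_stabilisers}; properness, quasi-finiteness and flatness are stable under the relevant composition/extension arguments (flatness of an extension of flat things, properness since both $\Pic^{\ul 0}[r]$ and the root stack $\minCFG{\TroPic[r]}$ are proper over $S$).

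For the second assertion, suppose $\minCFG{\TroPic_{X/S}[r]} \to S$ is strict. I would first note that strictness over $S$ combined with part (2) of \Cref{thm:tro_pic_tors_properties} forces $\minCFG{\TroPic_{X/S}[r]} \to S$ to be classically flat and quasi-finite with a log structure pulled back from $S$; moreover, inspecting the root-stack chart $\Spec \bb Z[Q] \to \Spec \bb Z[P]$ used in the proof of \Cref{thm:tro_pic_tors_properties}, strictness means the chart can be taken with $P = Q$, so the morphism is in fact \'etale on underlying schemes (not merely log \'etale), at least when $r$ is invertible; in general one gets flatness plus the absence of the residual $\mu_r$-stabilisers noted in the Remark after \Cref{prop:trivial_stabilisers}, which is what strictness buys. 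Then $\minCFG{\LogPic_{X/S}[r]} \to S$ is an extension of the \'etale $\minCFG{\TroPic_{X/S}[r]} \to S$ by the \'etale $\Pic^{\ul 0}_{X/S}[r] \to S$ (\'etale because $\Pic^{\ul 0}_{X/S}$ is smooth and, under the strictness hypothesis, $r$ is forced to be invertible on $S$ — this is the content of the ``eating the ramification'' story and should be spelled out), hence itself \'etale over $S$.

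The natural group structure is the last point. Away from log subtleties, $\LogPic_{X/S}$ is a sheaf of abelian groups on $\LogSch_S$, so $\LogPic_{X/S}[r]$ is too; the only issue is whether this group structure descends to the \emph{underlying} algebraic space of $\minCFG{\LogPic_{X/S}[r]}$, i.e. whether the multiplication map, a priori a morphism of log algebraic spaces, is strict so that it lives over $\cat{Sch}_S$ rather than just $\LogSch_S$. Here I would argue that under the strictness hypothesis every minimal object has log structure strictly pulled back from $S$ (since its ghost sheaf $(\ghost_S)_{\gamma_0/r}$ equals $\ghost_S$ by strictness), so the fibre product $\minCFG{\LogPic_{X/S}[r]} \times_S \minCFG{\LogPic_{X/S}[r]}$ again has minimal objects with log structure pulled back from $S$, the group law is automatically strict, and one can apply Gillam's equivalence $\Phi$ (the full faithfulness recalled in the excerpt) to transport the group object structure from $\cat{CFG}_{\LogSch_S}$ down to $\cat{CFG}_{\cat{Sch}_S}$. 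The main obstacle I anticipate is precisely this last descent: making rigorous that strictness of $\minCFG{\TroPic[r]} \to S$ propagates to strictness of the multiplication morphism on $\minCFG{\LogPic[r]}$, and checking that $\Phi$ really does carry group objects to group objects in the relevant $2$-categorical sense — everything else is a routine assembly of \Cref{thm:tro_pic_tors_properties}, \Cref{prop:trivial_stabilisers} and standard finiteness properties of $\Pic^{\ul 0}_{X/S}[r]$.
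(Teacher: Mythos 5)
Your overall route is exactly the paper's: the proof in the text is a one\nobreakdash-liner citing \ref{prop:trivial_stabilisers} for representability by spaces and deducing the remaining properties from \ref{thm:tro_pic_tors_properties} together with the exact sequence \ref{SES:pic_log_tro}, which is precisely the skeleton you flesh out. Your handling of $\Pic^{\ul 0}_{X/S}[r]$ (finite flat kernel of $[r]$ on a semiabelian space), the torsor/extension argument for properness, flatness and quasi-finiteness, and the observation that strictness makes the fs fibre product compute correctly on underlying spaces so that Gillam's $\Phi$ transports the group law, are all consistent with what the paper intends (the last point is the content of the discussion in the section on the logarithmic version, read in reverse).

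There is, however, one genuinely false step: the claim that ``under the strictness hypothesis, $r$ is forced to be invertible on $S$,'' which you use to get \'etaleness of $\Pic^{\ul 0}_{X/S}[r] \to S$. Strictness of $\minCFG{\TroPic_{X/S}[r]} \to S$ is a purely monoid-theoretic condition: it says the root stack of \ref{lemma_minimal_monoid_integral} is trivial, i.e.\ $j\colon \ghost \to \ghost_{\gamma/r}$ is an isomorphism for every $\gamma$. This holds, for instance, for any compact-type curve ($\ca H_1 = 0$) over a base of arbitrary characteristic, and more generally it never constrains the residue characteristics of $S$. It is also not ``the content of the eating-the-ramification story'': the root stack $\tilde S$ modifies ghost sheaves, not characteristics. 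So your deduction that $\Pic^{\ul 0}_{X/S}[r]\to S$ is \'etale collapses; in characteristic $p \mid r$ that torsion group scheme is finite flat but not \'etale (already for a smooth curve, $J[p]$ in characteristic $p$). Note that this gap is partly inherited from the statement itself, whose \'etaleness assertion really does require $r$ invertible in addition to strictness (compare the smooth case in the paper, where \'etaleness of the degree-$r^{2g}$ cover is asserted only when $r$ is invertible); but your attempted repair is not a valid one, and you should instead either carry ``$r$ invertible'' as a hypothesis for the \'etale conclusion or weaken it to flatness. The remaining parenthetical claim that strictness lets you take $P=Q$ in the chart, so the tropical part is honestly \'etale, is fine and does not need $r$ invertible.
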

\begin{proof}
Representability is \ref{prop:trivial_stabilisers}; the other assertions are immediate from \ref{thm:tro_pic_tors_properties} and the short exact sequence \ref{SES:pic_log_tro}. 
\end{proof}
\section{Compactifying spaces of roots of a line bundle}\label{sec:compactifying_roots}
Let $X/S$ be a log curve, $r$ a positive integer, and $\ca L$ on $X$ a line bundle of fibrewise total degree divisible by $r$. In this section we apply results of the previous section to studying the moduli of $r$th roots of $\ca L$. 
\subsection{Compactifying the space of roots of a line bundle}\label{sec:log_descriptions}
\begin{definition}
The \emph{moduli space of logarithmic $r$th roots of $\ca L$}, denoted $\roots[r]{S}{\ca L}$, is the fibre product
\begin{equation}
 \begin{tikzcd}
\roots[r]{S}{\ca L}\arrow[d]  \arrow[r] & \LogPic_{X/S} \arrow[d, "r"] \\
  S \arrow[r, "\ca L"] & \LogPic_{X/S}. 
\end{tikzcd}
\end{equation}
\end{definition}
It carries a universal log curve $X_{\roots{S}{\ca L}}$ by pullback, and the map 
$$\roots[r]{S}{\ca L}\to  \LogPic_{X/S}$$
induces strict \'etale locally on $S$ an equivalence class of log line bundles $\ca L^{\frac{1}{r}}$ such that $(\ca L^{\frac{1}{r}})^{\otimes r} \cong \ca L$. 

\subsubsection{The smooth case}
If $X/S$ is smooth (equivalently, strict) then $\roots{S}{\ca L}$ is exactly the classical space of $r$-th roots of $\ca L$. It is a finite flat cover of $S$ of degree $r^{2g}$, and is \'etale if $r$ is invertible on $S$. The space $\roots{S}{\ca O}$ is a group scheme over $S$, and the spaces $\roots{S}{\ca L}$ are torsors under $\roots{S}{\ca O}$. Locally on $S$ the universal $r$th root is represented by a log line bundle $\ca L^{\frac{1}{r}}$ on $X_{\roots{S}{\ca L}}$ over $\roots{S}{\ca L}$, and this log line bundle is actually a line bundle, and admits an isomorphism
\begin{equation}\label{eq:r-th_power_iso}
(\ca L^{\frac{1}{r}})^{\otimes r} \cong \ca L. 
\end{equation}
\subsubsection{In compact type}
All but the last of the above claims carry over unchanged to the case where $X/S$ is of compact type. The log line bundle $\ca L^{\frac{1}{r}}$ on $X_{\roots{S}{\ca L}}$ can still be represented by a line bundle on $X_{\roots{S}{\ca L}}$, however the equation \ref{eq:r-th_power_iso} holds only on the level of log line bundles. Suppose $S$ is log regular; then this means that (locally on $S$) there exists a vertical Cartier divisor $D$ on $X_{\roots{S}{\ca L}}$ such that 
\begin{equation}
(\ca L^{\frac{1}{r}})^{\otimes r} \cong \ca L(D). 
\end{equation}
This $D$ is chosen so as to make the \emph{multidegree} of $\ca L$ divisible by $r$; this determines $D$ up to addition of a pullback from $S$. 
\subsubsection{Beyond compact type; logarithmic version}\label{sec:log_version}
If $X/S$ is not of compact type then by \ref{cor:LogPicr} the moduli space $\roots[r]{S}{\ca L}$ is a finite flat scheme over $S$ of degree $r^{2g}$ with a log structure, and is log \'etale if $r$ is invertible on $S$. However, it is not in general \'etale even if $r$ is invertible on $S$; in particular this implies that $\roots[r]{S}{\ca O}$ is not in general a group scheme. On the other hand, it is a \emph{logarithmic} group scheme. One way to say this is that the Yoneda functor 
\begin{equation}
\ul{\roots[r]{S}{\ca O}} \colon \cat{Sch}_S^\op \to \cat{Set}
\end{equation}
does not admit a factorisation via the forgetful functor $\cat{Gp} \to \cat{Set}$, but the Yoneda functor
\begin{equation}
\roots[r]{S}{\ca O} \colon \cat{LogSch}_S^\op \to \cat{Set}
\end{equation}
\emph{does} come with a factorisation via the forgetful functor $\cat{Gp} \to \cat{Set}$. 
Alternatively, we might think of our logarithmic group scheme as being given by the data of a multiplication map 
\begin{equation}
m\colon \roots[r]{S}{\ca O} \times_S \roots[r]{S}{\ca O} \to \roots[r]{S}{\ca O}, 
\end{equation}
however taking the underlying scheme does \emph{not} commute with fs logarithmic fibre products; there is a natural map 
\begin{equation}
\ul{\roots[r]{S}{\ca O} \times_S \roots[r]{S}{\ca O}} \to \ul{\roots[r]{S}{\ca O}} \times_\ul{S} \ul{\roots[r]{S}{\ca O}}, 
\end{equation}
but it is not an isomorphism, and there is no way to complete the dashed arrow in the diagram 
\begin{equation}
 \begin{tikzcd}
  \ul{\roots[r]{S}{\ca O} \times_S \roots[r]{S}{\ca O}} \ar[d]  \arrow[r, "\ul{m}"] &  \ul{\roots[r]{S}{\ca O}}\\
  \ul{\roots[r]{S}{\ca O}} \times_\ul{S} \ul{\roots[r]{S}{\ca O}} \arrow[ur, dashed] & 
\end{tikzcd}
\end{equation}
so as to make it commute. If we replace the line bundle $\ca O$ by $\ca L$, and talk about torsors rather than groups, then exactly the same happens; the logarithmic scheme $\roots[r]{S}{\ca L}$ is a (logarithmic) torsor under the logarithmic group scheme $\roots[r]{S}{\ca O}$, but if we take the underlying scheme then it is not a torsor (indeed, there is nothing for it to be torsor under, as $\ul{\roots[r]{S}{\ca O}}$ is not a group). 
\subsection{From logarithmic groups to root stacks}\label{sec:root_stack}
As we have seen, if one is prepared to work in the logarithmic category then $\roots[r]{S}{\ca O}$ has a group structure and $\roots[r]{S}{\ca L}$ a torsor structure (even beyond compact type). However, if one prefers to tarry in the world of schemes then good behaviour of the schemes $\ul{\roots[r]{S}{\ca O}}$ and $\ul{\roots[r]{S}{\ca L}}$ can be restored after passing to a root stack of $S$. 
We define $\ghost_{X/S, r}$ to be the saturation of the image of $\ghost_S$ in 
\begin{equation}
\frac{\ghost_S \oplus (H_1(\frak X) \otimes_{\bb Z} H_1(\frak X))}{(\gamma \cdot \gamma', -r\gamma\otimes\gamma')}, 
\end{equation}
with natural injective map $j\colon \ghost_S \to \ghost_{X/S, r}$. One checks as in \ref{lem:inclusion_integral} that this is an integral system of denominators, hence the root stack $\tilde S \coloneqq \sqrt[j]{S} \to S$ is a proper flat quasi-finite map, and is an isomorphism over the open locus in $S$ over which $X$ is strict. The idea of this root stack is that it `eats up all the ramification' from the spaces $\roots[r]{S}{\ca L}$ (note that $\tilde S$ does not depend on $\ca L$ -- it works for all line bundles of total degree divisible by $r$). The next lemma makes this intuition precise. 
\begin{lemma}\label{lem:strict_after-root}
The natural map from the log fibre product $\roots[r]{S}{\ca L} \times_S \tilde S$ to $\tilde S$ is strict. 
\end{lemma}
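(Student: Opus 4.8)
The plan is to reduce the claim to a computation of ghost sheaves, using the description of $\roots[r]{S}{\ca L}$ via $\LogPic_{X/S}[r]$ established in the previous section. Recall that $\roots[r]{S}{\ca L}$ is the fibre product of $\ca L\colon S\to \LogPic_{X/S}$ with the multiplication-by-$r$ map, and its minimal objects map to minimal objects of $\TroPic_{X/S}[r]$. By \ref{lemma_minimal_monoid_integral} the torsion $\minCFG{\TroPic_{X/S}[r]}$ is the root stack $\sqrt[j]{\ca H_1/r\ca H_1}$ for the system of denominators $\ghost\to\ghost_{\gamma/r}$ attached to the tautological cycle. Since strictness of $\roots[r]{S}{\ca L}\times_S\tilde S\to\tilde S$ is a condition on ghost sheaves that can be checked after any strict \'etale (or even log \'etale) cover, and since passing to minimal objects does not change the question, it suffices to prove that for a minimal object $t\colon T\to\tilde S$, with induced object of $\roots[r]{S}{\ca L}$ lying over a cycle $\gamma\in H_1(\frak X)$, the map $t^*\ghost_{\tilde S}\to t^*\ghost_S\to\ghost$ of the corresponding point of $\roots[r]{S}{\ca L}$ is already an isomorphism onto the ghost sheaf. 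Concretely: the ghost sheaf of a minimal point of $\roots[r]{S}{\ca L}$ over $\gamma$ is $(\ghost_S)_{\gamma/r}$ as in \ref{def:M_gamma_over_r}, whereas $\ghost_{\tilde S}$ is built from the \emph{universal} version $\ghost_{X/S,r}$ involving all of $H_1(\frak X)\otimes H_1(\frak X)$; so the content is that base-changing along $\tilde S$ forces the ``denominator'' $\ghost_{\gamma/r}$ to collapse into $\ghost_{\tilde S}$.

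The key steps, in order, are as follows. First, I would make explicit the ghost sheaf of a minimal object of $\roots[r]{S}{\ca L}$: using the exact sequence $1\to\Pic^{\ul 0}_{X/S}[r]\to\LogPic_{X/S}[r]\to\TroPic_{X/S}[r]\to1$ and \ref{prop:trivial_stabilisers}, a minimal point over a geometric point of $S$ corresponds to a cycle $\gamma\in H_1(\frak X)/rH_1(\frak X)$ (together with a choice killing the $\Pic^{\ul 0}[r]$-ambiguity, which does not affect the log structure), and its minimal monoid is exactly $\ghost_{\gamma/r}=\mathrm{sat}_{\ghost}\big(\ghost^\gp\oplus H_1(\frak X)\big)/(\gamma\cdot\gamma',-r\gamma')$. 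Second, I would write down the ghost sheaf of $\tilde S$ over the same point, namely $\ghost_{X/S,r}$, the saturation of the image of $\ghost$ in $\big(\ghost^\gp\oplus(H_1(\frak X)\otimes H_1(\frak X))\big)/(\gamma''\cdot\gamma',-r\gamma''\otimes\gamma')$. Third, the fs log fibre product $\roots[r]{S}{\ca L}\times_S\tilde S$ at a minimal point has ghost sheaf the saturated pushout $\ghost_{\gamma/r}\oplus_{\ghost}\ghost_{X/S,r}$, and I must show the natural map from $\ghost_{X/S,r}$ to this pushout is an isomorphism; equivalently, that the map $\ghost\to\ghost_{\gamma/r}$ becomes an isomorphism after pushout along $\ghost\to\ghost_{X/S,r}$. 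This is where the definition of $\ghost_{X/S,r}$ pays off: in $\ghost_{X/S,r}$ every element $\gamma\cdot\gamma'$ of $\ghost$ is, by construction, equal to $r(\gamma\otimes\gamma')$ and hence divisible by $r$ in a way compatible with the relations defining $\ghost_{\gamma/r}$ — so the generator $(0,\gamma')\in\ghost_{\gamma/r}$ already has a canonical preimage in $\ghost_{X/S,r}$, namely (the saturation-class of) $\gamma\otimes\gamma'$. One checks this respects the defining relations $(\gamma\cdot\gamma',-r\gamma')$ and, using \ref{lemma:unique_fact} together with sharpness and saturatedness, that the resulting map $\ghost_{\gamma/r}\to\ghost_{X/S,r}$ splits the inclusion after pushout, giving the isomorphism. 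Finally, I would note that strictness of the structure map is exactly the statement that this isomorphism holds at every minimal point, and conclude by Gillam's machinery (\ref{lem:strict_preserves_minimal_objects}) that the morphism of log stacks is strict.

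The main obstacle I anticipate is bookkeeping the saturation operations correctly: both $\ghost_{\gamma/r}$ and $\ghost_{X/S,r}$ are defined as saturations of images inside quotients of free groups, and the fs fibre product involves yet another saturation, so I must be careful that the map $\gamma\otimes\gamma'\mapsto(0,\gamma')$ is well-defined on the saturated monoids and not merely on the group-level quotients — in particular that no spurious torsion in $H_1(\frak X)/rH_1(\frak X)$ obstructs it. A secondary subtlety is that $\roots[r]{S}{\ca L}$ carries a $\Pic^{\ul 0}_{X/S}[r]$-worth of points over each tropical point, all with the \emph{same} ghost sheaf, so one genuinely only needs the argument on ghost sheaves and the $\Pic^{\ul 0}[r]$-direction is strict for free; I would remark on this to avoid the reader worrying that the non-strict part has been overlooked. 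Once the ghost-sheaf identity is in hand, everything else — representability, the conclusion of \ref{thm:intro_1} — is formal from the results already proved.
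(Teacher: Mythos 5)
Your proposal is correct and follows essentially the same route as the paper: reduce (via strictness of $\LogPic$ over $\TroPic$ and tropical triviality of $\ca L$) to $\TroPic_{X/S}[r]$ realised as the root stack for $\ghost\to\ghost_{\gamma/r}$, and then show the map $(m,\gamma')\mapsto(m,\gamma\otimes\gamma')$ into $\ghost_{X/S,r}$ makes the saturated pushout $\ghost_{\gamma/r}\oplus_{\ghost_S}\ghost_{X/S,r}\to\ghost_{X/S,r}$ an isomorphism. The paper closes the final step slightly more directly than you do, by observing that it suffices that $\ker\phi^{\gp}$ be torsion, which is immediate since $\phi$ commutes with the injections from $\ghost_S$ and both cokernels are torsion.
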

\begin{proof}We work strict-\'etale locally, so assume that $X/S$ is nuclear. Since $\LogPic_{X/S}$ is strict over $\TroPic_{X/S}$ it is enough to prove the lemma with $\roots[r]{S}{\ca L}$ replaced by $\TroPic_{X/S}[r]$ (note that every line bundle is tropically trivial). The latter is realised in \ref{lemma_minimal_monoid_integral} as a root stack of the tropical homology $\mathcal H_{1, X/S} = \ca H_1 \to S$. We may work locally on this strict etale cover $\mathcal H_{1}$ of $S$, which comes down to choosing a $1$-cycle $\gamma$ on the tropicalisation $\frak X$ of $X$. Then by \ref{lemma_minimal_monoid_integral} the inclusion $\TroPic_{X/S}[r] \hookrightarrow \mathcal H_1/r\mathcal H_1$ is the root stack $\sqrt[j]{\ca H_1/r\ca H_1}$ with respect to the monoid extension $j\colon \ghost \to \ghost_{\gamma/r}$ of \ref{def:M_gamma_over_r}. 
Note that there is a natural map 
\begin{equation}
\phi\colon \ghost_{\gamma/r} \to \ghost_{X/S, r}; (m, \gamma') \mapsto (m, \gamma \otimes \gamma')
\end{equation}
Unravelling the definitions of root stacks and their minimal objects, we need to show that the induced map
\begin{equation}
\ghost_{\gamma/r} \oplus_{\ghost_S} \ghost_{X/S, r} \to \ghost_{X/S, r}
\end{equation}
(with pushout taken in the category of \emph{sharp} fs monoids) is an isomorphism. This in turn comes down to showing that the kernel of $\phi^\gp$ is torsion. But clearly $\phi$ commutes with the injections from $\ghost_S$ to $\ghost_{\gamma/r} $ and $ \ghost_{X/S, r}$, and the cokernels of these injections are torsion, from which the claim follows. 
\end{proof}
\begin{theorem}
After base-change to $\tilde S$:
\begin{enumerate}
\item
the \emph{underlying scheme} $\ul{\roots[r]{S}{\ca O}}$ is a finite flat group scheme, and is \'etale if $r$ is invertible on $S$;
\item the \emph{underlying scheme} $\ul{\roots[r]{S}{\ca L}}$ is an fppf torsor under $\ul{\roots[r]{S}{\ca O}}$. If $r$ is invertible on $S$ then it is an \'etale torsor (in particular, it is finite \'etale). 
\end{enumerate}
\end{theorem}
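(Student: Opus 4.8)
The plan is to pull everything back along $\tilde S\to S$, where by \ref{lem:strict_after-root} the relevant torsion becomes strict, and then to read the statements off from the structural results of the previous sections. Throughout one uses that the formation of $\roots[r]{S}{\ca L}$, $\LogPic_{X/S}$ and $\TroPic_{X/S}$ commutes with the base change $\tilde S\to S$ (each is built from fibre products, and $\LogPic$, $\TroPic$ are compatible with base change in $\cat{LogSch}$), so it suffices to prove the assertions over $\tilde S$ under the hypothesis that $\minCFG{\TroPic_{X/S}[r]}\to S$ is strict — which is exactly what the proof of \ref{lem:strict_after-root} establishes, that proof reducing strictness of $\roots[r]{S}{\ca L}\times_S\tilde S\to\tilde S$ to strictness of $\TroPic_{X/S}[r]\times_S\tilde S\to\tilde S$.

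For (1), note that $\roots[r]{S}{\ca O}=\LogPic_{X/S}[r]$ since a torsion log line bundle has degree $0$. Then \ref{cor:LogPicr} gives that $\ul{\roots[r]{S}{\ca O}}\to S$ is proper, quasi-finite (hence finite), flat, and carries a natural group structure; when $r$ is invertible on $S$, part (3) of \ref{thm:tro_pic_tors_properties} together with the short exact sequence \ref{SES:pic_log_tro} — whose kernel $\Pic^{\ul 0}_{X/S}[r]$ is finite étale because $\Pic^{\ul 0}_{X/S}$ is semiabelian — upgrades this to étaleness.

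For (2), I would use that $\roots[r]{S}{\ca L}$ is a torsor under $\roots[r]{S}{\ca O}$ in fs log algebraic stacks over $S$, the action and the isomorphism $G\times Y\to Y\times Y$ being restrictions of the group law on $\LogPic$. By \ref{lem:strict_after-root} both $\roots[r]{S}{\ca L}$ and $\roots[r]{S}{\ca O}$ are strict over $\tilde S$, and for a fibre product one leg of which is strict the underlying stack is the fibre product of underlying stacks; so passing to underlying stacks turns the torsor diagrams over $\tilde S$ into diagrams of schemes, exhibiting $\ul{\roots[r]{S}{\ca L}}$ as a pseudo-torsor under the group scheme $\ul{\roots[r]{S}{\ca O}}$. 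To conclude it is an fppf torsor (and a finite flat scheme over $\tilde S$), it remains to check $\ul{\roots[r]{S}{\ca L}}\to\tilde S$ is faithfully flat: finite flatness follows exactly as in \ref{cor:LogPicr} applied to the fibre product defining $\roots[r]{S}{\ca L}$, and surjectivity because $\ca L$, of total degree divisible by $r$, admits an $r$-th log root strict-fppf-locally over $\tilde S$ — indeed the intersection-pairing values $\gamma\cdot\gamma'$ become divisible by $r$ in $\ghost_{X/S,r}^{\gp}$ by construction of $\tilde S$, so multiplication by $r$ on $\LogPic^{0}$ is a strict-flat epimorphism over $\tilde S$. When $r$ is invertible all covers involved may be taken étale, giving the étale torsor statement.

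The step I expect to be the main obstacle is this last one: verifying that $\tilde S$ genuinely absorbs the failure of $r$-divisibility of $\LogPic_{X/S}$, so that $\ul{\roots[r]{S}{\ca L}}\to\tilde S$ is faithfully flat — in particular non-empty — for an arbitrary line bundle $\ca L$ of degree divisible by $r$, not merely over the compact-type locus where $\tilde S=S$. Everything else is formal bookkeeping with strict morphisms and with base-change compatibility of $\roots$, $\LogPic$ and $\TroPic$.
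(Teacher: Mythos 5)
Your proposal is correct and follows essentially the same route as the paper: the paper's proof is simply the observation that the logarithmic statements of \ref{sec:log_descriptions} descend to the underlying schemes once the morphisms are strict, which is exactly what \ref{lem:strict_after-root} supplies after base change to $\tilde S$. The "main obstacle" you flag at the end (faithful flatness, in particular non-emptiness, of $\ul{\roots[r]{S}{\ca L}}\to\tilde S$) is already contained in the earlier logarithmic results, since \ref{sec:log_descriptions} establishes that $\roots[r]{S}{\ca L}\to S$ is finite flat of degree $r^{2g}$, hence surjective, before any base change.
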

\begin{proof}
The corresponding logarithmic statements are in \ref{sec:log_descriptions}. In general these do not imply analogous statements for the underlying schemes, but in the strict case (which applies by \ref{lem:strict_after-root}) they do. 
\end{proof}
\subsection{Examples of the root stack $\tilde S$}\label{sec:examples}
Suppose now that $X/S$ has minimal log structure and that $S$ is log regular; the prototypical example is that $X/S$ is a stable of genus $g$, the induced map $S \to \Mbar_g$ is smooth, and the log structure on $S$ is pulled back from the log structure on $\Mbar_g$ given by the boundary divisor. Suppose that $X/S$ is nuclear (i.e. $S$ is `small enough') with graph $\frak X$, and write $E$ for the set of edges and $E^{ns}$ for the set of non-separating edges. Then $\ghost_{X/S} = \bb N^E$, but $\tilde S$ will only `see' the non-separating edges. 

\subsubsection{Case of a cyclic graph}
First, suppose that $h_1(\frak X) = 1$ (so the non-separating edges all lie on a single cycle), then $\tilde S \to S$ is the root stack associated to the natural map
\begin{equation}\label{eq:small_root_stack}
\ghost_S =  \bb N^E \to \frac{\bb N^E \oplus \bb N}{(\sum_{e \in E^{ns}} \ell(e), 0) \sim (0,r)}
\end{equation}
In terms of classical root stacks of Cartier divisors, this is just root stack $S(\sqrt{\Delta^{ns}}) \to S$, where $\Delta^{ns}$ is the union of the boundary divisors in $S$ corresponding to non-separating edges. 
\subsubsection{Case of a tree with loops attached}
Next suppose that every non-separating edge is a loop (equivalently $h_1(\frak X) = \# E^{ns}$), then $\tilde S \to S$ is the root stack associated to the map 
\begin{equation}
\ghost_S =  \bb N^E \to \bb N^E 
\end{equation}
sending the length $\ell(e)$ of an edge $e$ to $\ell(e)$ if $e$ is separating, and $r\ell(e)$ otherwise. In terms of classical root stacks of Cartier divisors, this is the fibre product 
\begin{equation}\label{eq:big_root_stack}
\bigtimes_{e \in E^{ns}} S(\sqrt{\ell(e)})
\end{equation}
where the fibre product is taken over $S$, and $S(\sqrt{\ell(e)}) \to S$ is the root stack in the irreducible boundary divisor corresponding to $\ell(e)$. 

\subsubsection{Comparison to Chiodo's root stacks}
In the particular case where $S = \Mbar_{g,n}$ we can compare to Chiodo's constructions \cite{Chiodo2008Stable-twisted-}. From these examples we can see that we take a significantly milder root stack of $\Mbar_{g,n}$ than Chiodo, in two ways:
\begin{enumerate}
\item
Chiodo extracts roots at boundary divisors corresponding to both separating and non-separating  nodes, whereas we only extract them for non-separating nodes;
\item Chiodo always extracts a root of each boundary component separately (as in \ref{eq:big_root_stack}), whereas we are often able to extract a milder root stack, as for example in \ref{eq:small_root_stack}. 
\end{enumerate}
To verify the root stacks that Chiodo takes in these examples, the most direct way is to inspect the proof of \cite[Theorem 4.1.4]{Chiodo2008Stable-twisted-}. 

\section{A spin generalisation of the double ramification cycle}

\newcommand{\LogChow}{\on{LogCH}}

If $X/S$ is a smooth proper curve and $\ca L$ on $X$ a line bundle, the \emph{double ramification cycle} $\DR(\ca L)$ measures the locus in $S$ over which $\ca L$ is fibre-wise trivial. Extending this to a log curve $X/S$ has been a somewhat lengthy process (see \cite{Holmes2017Extending-the-d}, \cite{Marcus2017Logarithmic-com}, \cite{Bae2020Pixtons-formula}), and we know now that the DR cycle lives more naturally in the \emph{logarithmic} Chow rings\footnote{Throughout this section our Chow rings are taken in an operational sense, so we do not have to worry about singularities of $S$. } of $S$, defined as the colimit of Chow rings of log blowups of $S$; see \cite{Barrott2019Logarithmic-Cho} for details on log Chow, and \cite{Holmes2017Multiplicativit}, \cite{Molcho2021The-Hodge-bundl}, \cite{Holmes2021Logarithmic-int}, \cite{Molcho2021A-case-study-of} for the resulting logarithmic DR cycle. 

The logarithmic Picard space $\LogPic_{X/S}$ is not algebraic and hence does not have a Chow group; but it does admit a log blowup which is algebraic, and hence has a well-defined log Chow group. The substack of $\LogPic_{X/S}$ consisting of those objects which are isomorphic to pullbacks of log line bundles from $S$ (the `unit section' of $\LogPic_{X/S}$) becomes after suitable log blowup a regularly embedded substack, and hence has a fundamental class $\DR^\LOG \in \LogChow(\LogPic_{X/S})$. 

With the technology of \cite{Molcho2018The-logarithmic} we can give a very concise definition of the double ramification cycle: the line bundle $\ca L$ on $X$ induces a map $\phi_\ca L \colon S \to \LogPic_{X/S}$, and we define $\DR^\LOG(\ca L) \coloneqq \phi_{\ca L}^!\DR^\LOG \in \LogChow(S)$. The intrepid reader may enjoy verifying that this is equivalent to the definition from \cite{Holmes2017Multiplicativit}; the simplest route is via \cite{Holmes2021Logarithmic-int}. The classical DR cycle of \cite{Holmes2017Extending-the-d,Marcus2017Logarithmic-com,Bae2020Pixtons-formula} is then recovered by pushing this logarithmic class down to a classical Chow class on $S$. 

With this approach in hand it is rather easy to define analogues of the DR cycle for $r$-th roots of $\ca L$. The stack $\roots[r]{S}{\ca L}$ comes with a natural map $\phi_{\ca L^{\frac{1}{r}}}\colon \roots[r]{S}{\ca L} \to \LogPic_{X/S}$, and we define 
\begin{equation}
\DR^{\frac{1}{r}, \LOG}(\ca L) = \phi_{\ca L^{\frac{1}{r}}}^!\DR^\LOG \in \LogChow(\roots[r]{S}{\ca L}). 
\end{equation}
This of course can be pushed forward to a classical class 
\begin{equation}\label{eq:spinDR}
\DR^{\frac{1}{r}}(\ca L) \in \on{CH}(\roots[r]{S}{\ca L}). 
\end{equation}

Finally we explain how to specialise the above to moduli of spin curves.  We fix non-negative integers $g$, $n$ and write $\Mbar_{g,n}$ for the stack of stable $n$-marked log curves of genus $g$ --- this is the familiar moduli space of Deligne-Mumford-Knudsen, with log structure coming from the normal-crossings boundary. We denote the markings by $p_1, \dots, p_n$. Choose integers $k$ and $r$ with $r$ positive and dividing $k(2g-2)$, and choose integers $a_1, \dots, a_n$ summing to $k(2g-2)/r$. Then the moduli stack 
\begin{equation}
\roots[r]{\Mbar_{g,n}}{[\omega_{X/S}^{\otimes k}(-\sum_i ra_i p_i)]}
\end{equation}
is a finite flat cover of $\Mbar_{g,n}$ of degree $r^{2g}$, and carries the class of log line bundles $[\ca L] \coloneqq [\omega_{X/S}^{\frac{k}{r}}(-\sum_i a_i p_i)]$. The pullback 
\begin{equation}
\phi_{[\ca L]}^!\DR^\LOG \in \LogChow(\roots[r]{\Mbar_{g,n}}{[\omega_{X/S}^{\otimes k}(-\sum_i ra_i p_i)]})
\end{equation}
is a natural spin-analogue of the log DR cycle, and can as usual be pushed down to $\on{CH}(\roots[r]{\Mbar_{g,n}}{[\omega_{X/S}^{\otimes k}(-\sum_i ra_i p_i)]})$.

%

\section{Formulae for spin DR}\label{sec:DR_appendix}

In this section we write down a formula for the class 
\begin{equation*}
\DR^{\frac{1}{r}}(\ca L) \in \on{CH}(\roots[r]{S}{\ca L}). 
\end{equation*}
of \ref{eq:spinDR}, where $\ca L$ is a line bundle of total degree $0$ (if the total degree is non-zero then the corresponding cycle is simply zero). This will be expressed via the language of piecewise-polynomial functions, as described in \cite{Holmes2021Logarithmic-int,Molcho2021The-Hodge-bundl,Holmes2022Logarithmic-dou}. In brief, (strict) piecewise polynomials on a log stack $T$ are sections of the symmetric algebra $\on{Sym}^\bullet \ghost_T$, and there is a map $\Psi$ taking piecewise polynomials of degree $d$ to elements of $\on{CH}(T)$ of codimension $d$. We will describe $\DR^{\frac{1}{r}}(\ca L)$ in terms of piecewise polynomials on $\roots[r]{S}{\ca L}$. 

While the definition of the class $\DR^{\frac{1}{r}}(\ca L)$ does not require that the universal $r$th root of $\ca L$ be representable globally on $S$ by a log line bundle, the formula we write will require this (just as in \cite{Bae2020Pixtons-formula}). To this end, we will assume that our curve $C/S$ admits a section through the smooth locus (this is essentially always the case when studying DR cycles), so that the universal $r$th root can be represented by a log line bundle trivialised along the chosen section. We will use this log line bundle without further comment in what follows. 

\subsection{Cones of the space of roots}

In order to write down piecewise polynomial functions on $\roots[r]{S}{\ca L}$ we need a fairly explicit description of the cones/charts of $\roots[r]{S}{\ca L}$ in terms of the cones of $S$. To this end, let $s$ be a strict geometric point of $S$ with ghost sheaf $\ghost_s$, and write $\frak X_s$ for the $\ghost_s$-tropical curve (metrised graph) coming from $X_s$. Let $s_r$ be the same geometric point but with an $r$-th root adjoined to each element of $\ghost_s$, and let $\frak X_{s_r}$ be the tropical curve obtained by base-changing to $\ghost_{s_r}$ and then dividing each edge into a chain of exactly $r$ edges of equal length. 

We write $V$ for the set of vertices of $\frak X_s$ and $V_{r}$ for the vertices of $\frak X_{s_r}$, and define $E$ and $E_r$ similarly; in particular
\begin{equation}
\# E_r = r\#E \;\;\; \text{and}\;\;\; \# V_r = \# V + (r-1) \# E. 
\end{equation}
The group of \emph{divisors} $\on{Div}(\frak X_s)$ is the free abelian group on $V$, and we define similarly $\on{Div}(\frak X_{s_r})$. 
We write $\sf{PL} = H^0(\frak X_s, \ghost_s^\gp)$ for the set of piecewise-linear functions, and similarly $\sf{PL}_r = H^0(\frak X_{s_r}, \ghost_{s_r}^\gp)$; equivalently this can be seen as the functions from $V$ (respectively $V_r$) to $\ghost_s$ (respectively $\ghost_{s_r}$) with integer slopes; again, we refer to \cite{Holmes2021Logarithmic-int,Holmes2022Logarithmic-dou,} for more explanation of these notions. If $\alpha$ is a piecewise linear function then we write $\partial \alpha$ for the map taking an oriented edge to the slope of $\alpha$ along that edge. We have `sum of outgoing slopes' maps
\begin{equation}
\nabla\colon \sf{PL} \to \on{Div}(\frak X_s) \;\;\; \text{and} \;\;\; \nabla\colon \sf{PL}_r \to \on{Div}(\frak X_{s_r});
\end{equation}
the images of these maps are called the principal divisors. The multidegree $\ul{\on{deg}}(\ca L)$ can be seen as a divisor on $\frak X_s$, or as a divisor on $\frak X_{s_r}$ supported on the vertices in $V \subseteq V_r$. 


To explicitly describe the the $s_r$-points of $\roots[r]{S}{\ca L}$, we first choose a set of divisors $D$ on the $\frak X_{s_r}$ which are coset representatives for the cosets mapping to $[\ul{\on{deg}}(\ca L)]$ under multiplication by $r$
\begin{equation}
r \colon \frac{\on{Div}(\frak X_{s_r})}{\sf{PL}_r} \to \frac{\on{Div}(\frak X_{s_r})}{\sf{PL}_r}. 
\end{equation}
In other words, each $D$ is a divisor on $\frak X_{s_r}$ such that $rD$ is linearly equivalent to $\ul{\on{deg}}(\ca L)$, and we choose one such $D$ from each linear equivalence class. 


For each $D$ we choose a piecewise linear function $\alpha_D$ such that $rD  = \ul{\on{deg}}(\ca L(\alpha_D))$; this $\alpha_D$ is unique up to addition of a constant. 

\begin{lemma}
Fix $D$ and $\alpha_D$ as above. Then there exists a piecewise linear function $\beta \in \sf{PL}_r$ such that $\alpha + r\beta$ is constant on $V \subseteq V_r$. 
\end{lemma}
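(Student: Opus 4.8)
The plan is to argue purely combinatorially on the subdivided tropical curve $\frak X_{s_r}$, using crucially that the multidegree $\ul{\on{deg}}(\ca L)$ is supported on the original vertex set $V \subseteq V_r$. Throughout, write $\ell(e)\in\ghost_s$ for the length of an edge $e$ of $\frak X_s$ and $\ell_r(e)=\ell(e)/r\in\ghost_{s_r}$ for the common length of the $r$ sub-edges into which $\frak X_{s_r}$ divides $e$, passing through vertices $u_0^e,u_1^e,\dots,u_r^e$ with $u_0^e,u_r^e\in V$ and $u_1^e,\dots,u_{r-1}^e\in V_r\setminus V$. For $\alpha\in\sf{PL}_r$ let $m_i^e\in\bb Z$ denote the slope of $\alpha$ on the $i$-th sub-edge of $e$ (oriented from $u_{i-1}^e$ to $u_i^e$), so that $\nabla\alpha(u_i^e)=m_{i+1}^e-m_i^e$ for $1\le i\le r-1$ and $\alpha(u_r^e)-\alpha(u_0^e)=(\sum_{i=1}^r m_i^e)\,\ell_r(e)$.

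First I would extract the key divisibility. Since $rD=\ul{\on{deg}}(\ca L(\alpha_D))=\ul{\on{deg}}(\ca L)+\nabla\alpha_D$ and $\ul{\on{deg}}(\ca L)$ vanishes at every interior vertex $u_i^e$ ($1\le i\le r-1$), we get $m_{i+1}^e-m_i^e=rD(u_i^e)\in r\bb Z$ there. Hence the slopes $m_1^e,\dots,m_r^e$ of $\alpha_D$ along $e$ are all congruent modulo $r$, so $M^e:=\sum_{i=1}^r m_i^e\equiv r\,m_1^e\equiv 0\pmod r$, and therefore $\alpha_D(u_r^e)-\alpha_D(u_0^e)=M^e\ell_r(e)$ equals $r$ times the element $(M^e/r)\ell_r(e)$ of $\ghost_{s_r}^\gp$. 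This is the only point at which the hypotheses on $D$ and on the support of $\ca L$ are used.

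Next I would define $\beta$ on $V$. Fix $v_0\in V$; for $v\in V$, pick a path in $\frak X_s$ from $v_0$ to $v$ through edges $e_1,\dots,e_n$ with orientation signs $\epsilon_j\in\{\pm1\}$, and set $\beta(v):=-\sum_j \epsilon_j(M^{e_j}/r)\,\ell_r(e_j)\in\ghost_{s_r}^\gp$. By the previous paragraph $r\beta(v)=-(\alpha_D(v)-\alpha_D(v_0))$, and this forces $\beta$ to be independent of the chosen path (around a loop $\sum_j\epsilon_j M^{e_j}\ell_r(e_j)=0$ and $\ghost_{s_r}^\gp$ is torsion-free), while also giving $(\alpha_D+r\beta)(v)=\alpha_D(v_0)$ for every $v\in V$. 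Finally I extend $\beta$ to the interior vertices: for $e$ from $v=u_0^e$ to $w=u_r^e$ one has $\beta(w)-\beta(v)=-(M^e/r)\ell_r(e)=K^e\ell_r(e)$ with $K^e:=-M^e/r\in\bb Z$, so putting $\beta(u_i^e):=\beta(v)+K^e\ell_r(e)$ for $1\le i\le r-1$ makes the slopes of $\beta$ along $e$ equal to $K^e$ on the first sub-edge and $0$ on the remaining $r-1$; these are integers, and since each interior vertex lies on a unique chain there is no compatibility to verify, so $\beta\in\sf{PL}_r$. Then $\alpha_D+r\beta$ is the constant $\alpha_D(v_0)$ on $V$.

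The only genuinely delicate point — and the step I would check most carefully — is the divisibility bookkeeping in the middle: one must be sure that the elements of $\ghost_{s_r}^\gp$ being divided by $r$ really are $r$-divisible there, which rests entirely on the slope congruences $m_{i+1}^e\equiv m_i^e\pmod r$ together with torsion-freeness of $\ghost_{s_r}^\gp$. Everything after that — defining $\beta$ on $V$ and extending it to a genuine element of $\sf{PL}_r$ — is routine.
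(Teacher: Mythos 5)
Your proof is correct and follows essentially the same route as the paper's: both extract the congruence of consecutive slopes modulo $r$ along each subdivided edge from the fact that $\ul{\on{deg}}(\ca L)$ is supported on $V$ and $r \mid rD$, deduce that $\alpha_D(v)-\alpha_D(v')$ is divisible by $r$ (indeed by $\ell(e)$) in $\ghost_{s_r}^\gp$, and then set $\beta = -\alpha_D/r$ on $V$ extended to interior vertices so that the slope is nonzero on only one sub-edge per original edge. The only cosmetic differences are that you verify path-independence explicitly and place the jump of $\beta$ on the first rather than the last sub-edge.
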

\begin{proof}
Fix a vertex $v_0 \in V$; after addition of a constant we may assume that $\alpha(v_0) = 0$. Consider a vertex $v$ connected to $v_0$ by an edge $e \in E$,  which gets divided into $r$ parts in $E_r$; write $v_0 = u_0, u_1, \dots, u_r = v$ for the vertices of $E_r$ along that edge. Then the sum of the outgoing slopes of $\alpha$ vanishes modulo $r$ at each of $u_1, \dots, u_{r-1}$ (since $\ul{\on{deg}}(\ca L)$ is supported on $V$, and $r \mid rD$), so writing $s_i$ for the slope of $\alpha$ from $u_i$ to $u_{i+1}$ we have $r \mid s_i - s_{i+1}$. Then 
\begin{equation}
\begin{split}
\alpha(v) & = \sum_{i=0}^{r-1} s_i \frac{\ell(e)}{r}\\
& = \left(s_0 + \sum_{i=0}^{r-1} \frac{s_i - s_0}{r}\right) r \frac{\ell(e)}{r}\\
\end{split}
\end{equation}
hence $\alpha(v)$ is divisible by $r$ in $\ghost_r$. Applying this to each edge of the graph yields that $r \mid \alpha(v)$ for all $v \in V$, and moreover that for every $e \in E$ with endpoints $v$ and $v'$ that $\ell(e) \mid \alpha(v) - \alpha(v')$. 

Now choose an orientation on each edge in $E$. We define $\beta$ to take the value $-\alpha(v)/r$ for $v \in V$  and for each interior vertex on an edge going \emph{away} from $v$. To see that $\beta$ is piecewise linear, consider an edge $e\in E$ joining vertices $v$ and $v'$; then as noted above we have $\ell(e) \mid \alpha(v) - \alpha(v')$, hence $\ell(e)/r$ (the length of any of the $r$-segments of $e$ in $E_r$) divides the difference in values of $\beta$ between the points of segments. 
\end{proof}
Replacing $\alpha_D$ by $\alpha_D + r\beta$ and $D$ by $D + \nabla\beta$ we may and do assume that $\alpha_D$ takes the value $0$ on each vertex in $V \subseteq V_r$. 

\begin{lemma}\label{lem:compare_reps}
If $(D', \alpha_{D'})$ is another pair with $D$ linearly equivalent to $D'$ and $\alpha_{D'}$ vanishing on $V$, then there exists $\beta\in \sf{PL}_r$ vanishing on $V$ with $D' = D + \on{div}\beta$. 
\end{lemma}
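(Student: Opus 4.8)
The plan is to take an arbitrary piecewise linear function realising the linear equivalence $D \linequiv D'$ on $\frak X_{s_r}$ and then correct it by a global constant so that it vanishes on $V \subseteq V_r$. So I would first choose $\gamma \in \sf{PL}_r$ with $\nabla\gamma = D' - D$; such a $\gamma$ exists by hypothesis, and it is unique only up to an additive constant, which is precisely the freedom we exploit.

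Next I would compare $\gamma$ with the data $\alpha_D, \alpha_{D'}$. By construction $\nabla\alpha_D = rD - \ul{\on{deg}}(\ca L)$ and $\nabla\alpha_{D'} = rD' - \ul{\on{deg}}(\ca L)$, so
\[
\nabla\bigl(\alpha_{D'} - \alpha_D - r\gamma\bigr) = (rD' - \ul{\on{deg}}(\ca L)) - (rD - \ul{\on{deg}}(\ca L)) - r(D' - D) = 0 .
\]
Since $\frak X_{s_r}$ is connected, a piecewise linear function whose associated divisor vanishes is constant, so $\alpha_{D'} - \alpha_D - r\gamma = c$ for a single element $c \in \ghost_{s_r}^\gp$. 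Evaluating at any vertex $v \in V$, where $\alpha_D$ and $\alpha_{D'}$ vanish by assumption, gives $c = -r\,\gamma(v)$; in particular $r\gamma(v)$ is independent of $v \in V$, and since $\ghost_{s_r}^\gp$ is torsion-free this forces $\gamma$ to be constant on $V$, say equal to $\gamma_0 \in \ghost_{s_r}^\gp$, with $c = -r\gamma_0$.

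Finally I would set $\beta \coloneqq \gamma - \gamma_0 \in \sf{PL}_r$. Subtracting a constant does not change the associated divisor, so $\nabla\beta = \nabla\gamma = D' - D$, i.e. $D' = D + \on{div}\beta$, and $\beta$ vanishes on $V$ because $\gamma|_V = \gamma_0$; this is the required $\beta$. The argument is short and essentially formal: the only points that need a little care are the implication ``vanishing associated divisor $\Rightarrow$ constant'' (using connectedness of $\frak X_{s_r}$) and the descent from ``$r\gamma$ constant on $V$'' to ``$\gamma$ constant on $V$'' (using torsion-freeness of $\ghost_{s_r}^\gp$), so I do not expect a genuine obstacle here.
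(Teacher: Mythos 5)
Your argument is correct and is essentially the paper's own proof: both take an arbitrary $\gamma$ realising the linear equivalence, observe that $r\gamma-(\alpha_{D'}-\alpha_D)$ has vanishing associated divisor and is hence constant, deduce from the vanishing of $\alpha_D,\alpha_{D'}$ on $V$ that $\gamma$ is constant on $V$, and normalise by subtracting that constant. You merely spell out the two small points (connectedness and torsion-freeness of $\ghost_{s_r}^\gp$) that the paper leaves implicit.
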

\begin{proof}
We have 
\begin{equation*}
rD = \ul{\on{deg}}\ca L + \nabla \alpha_D \;\;\; \text{and} \;\;\; rD' = \ul{\on{deg}}\ca L + \nabla \alpha_{D'}, 
\end{equation*}
so if $D' = D + \nabla \beta$ then $r \beta - (\alpha_D - \alpha_{D'})$ is constant. Hence $\beta$ is constant on $V$, and so can be chosen to vanish on $V$. 
\end{proof}

Finally, for each $D$ we choose representatives of the isomorphism classes of line bundles $\ca F$ on $X_{s_r}$ with $\ca F^{\otimes r} \isom \ca L(\alpha_D)$. The cones resulting from different choices of $\ca F$ will be canonically isomorphic, so we do not need to distinguish between them when writing a formula below. 

%

\subsection{Piecewise polynomial functions on the cones}
We define our strict piecewise polynomial function locally on $\roots[r]{S}{\ca L}$, so fix a prestable graph $\frak X_s$, a divisor $D$ on $\frak X_{s_r}$, a piecewise linear function $\alpha_D$, and a line bundle $\ca F$ with $\ca F^{\otimes r} \cong \ca L(\alpha_D)$ as above. 

We recall from \cite{Bae2020Pixtons-formula} the class $\eta_\ca L = \pi_*(c_1(\ca L)^2) \in \sf{CH}(S)$, and similarly define
\begin{equation}
\eta_{\ca L(\alpha_D)} \coloneqq \pi_*(c_1(\ca L(\alpha_D))^2). 
\end{equation}
Since we chose $\alpha_D$ to vanish on each vertex $v \in V$ we have $c_1(\ca L) \cdot c_1(\ca O(\alpha_D)) = 0$, so 
\begin{equation}
\eta_{\ca L(\alpha_D)} = \pi_*(c_1(\ca L)^2) + \pi_*(c_1(\ca O(\alpha_D))^2) = \eta_{\ca  L} + \pi_*(c_1(\ca O(\alpha_D))^2). 
\end{equation}
Now $\pi_*(c_1(\ca O(\alpha))^2)$ is the image of the piecewise linear function 
\begin{equation}
\sum_{v \in V_r} \alpha_D(v) \ul{\on{deg}}(\ca O(\alpha_D))(v)
\end{equation}
on $\roots[r]{S}{\ca L}$. Noting that 
\begin{equation}
\ul{\on{deg}}(\ca O(\alpha_D))(v) = \nabla(\alpha_D)(v). 
\end{equation}
we define a piecewise linear function $P_\eta = P_\eta(D)$ on $\roots[r]{S}{\ca L}$ by
\begin{equation}
P_\eta \coloneqq \frac{1}{r^2}\sum_{v \in V_r} \alpha_D(v) \nabla\alpha_D(v). 
\end{equation}

Now for a positive integer $R$ we define a piecewise-polynomial function $f_R(D)$ by 
\begin{equation}
f_R(D) \coloneqq \sum_{w} R^{-h_1(\Gamma)}\prod_{e = (h, h')} \exp(\frac{w(h)w(h')}{2} \ell(e))
\end{equation}
where the sum runs over weightings\footnote{In the sense of \cite{Bae2020Pixtons-formula}: a weighting mod $R$ for $D$ is a function $w$ from half-edges of $\frak X_{s_r}$ to $\{0, \dots, R-1\}$ such that $w(h) + w(h') = 0 \on{mod} R$ if $h$, $h'$ form an edge, and such that the sum of the weights of half-edges at a vertex  $v$ is equal to $-D(v)$ modulo $R$. } $w$ modulo $R$ on $\frak X_{s_r}$ for the divisor $D$, and the product runs over edges $e$ of $\frak X_{s_r}$. 

\begin{lemma}
On quasi-compact opens of $S$, the values of $f_R(D)$ are eventually polynomial in $R$. 
\end{lemma}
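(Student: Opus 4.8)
The plan is to reduce, using finiteness of the combinatorics, to a lattice‑point summation that is controlled by the polynomiality behind Pixton's formula. First I would invoke quasi‑compactness: over a quasi‑compact open of $S$ only finitely many cones of $\roots[r]{S}{\ca L}$ appear, $f_s(D)$ is defined cone by cone, and a finite maximum of eventual thresholds is again eventual, so it suffices to work in a single cone. There the subdivided tropical curve $\Gamma$ (denoted $\frak X_{s_r}$ above), its edge set $E$, the lengths $\ell(e)\in\ghost$, the divisor $D$, and $h\coloneqq h_1(\Gamma)$ are all fixed. Polynomiality in the integer $s$ may be checked in each graded piece of $\on{Sym}^\bullet\ghost\otimes\bb Q$, so I would fix a degree $d$, expand $\exp\!\bigl(\tfrac{w(h)w(h')}{2}\ell(e)\bigr)=\sum_{k\ge0}\tfrac1{k!}\bigl(\tfrac{w(h)w(h')}{2}\bigr)^{k}\ell(e)^{k}$, and collect terms of total degree $d$. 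This presents the degree‑$d$ part of $f_s(D)$ as a finite sum, with fixed coefficients $\prod_e\ell(e)^{k_e}/(2^{k_e}k_e!)\in\on{Sym}^{d}\ghost\otimes\bb Q$, of the rational numbers
\begin{equation*}
\Phi_{\underline k}(s)\coloneqq s^{-h}\sum_{w}\ \prod_{e\in E}\bigl(w(h_e)\,w(h'_e)\bigr)^{k_e},
\end{equation*}
indexed by multi‑indices $\underline k=(k_e)_{e\in E}$ with $\sum_e k_e=d$, so it is enough to show each $\Phi_{\underline k}$ is eventually polynomial in $s$.

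Now I would make the weighting sum explicit. Writing $m_e\coloneqq w(h_e)\in\{0,\dots,s-1\}$, the edge condition forces $w(h'_e)$ to be the representative of $-m_e$ modulo $s$, so $w(h_e)w(h'_e)=m_e(s-m_e)$; in particular the summand vanishes on each coordinate hyperplane $m_e=0$ (for $e$ with $k_e>0$), so only the region where the vertex congruences are honest $\bb Z$‑affine‑linear constraints on $(m_e)_{e\in E}$ contributes. For $s$ a multiple of $r$ the substitution $m_e=\rho_e+rn_e$ ($0\le\rho_e<r$, $0\le n_e<s/r$) turns the constrained sum into a sum over the (fixed, $s$‑independent) affine subspace of admissible $(\rho_e)\in(\bb Z/r)^E$ of a product over $e\in E$ of one‑variable Faulhaber sums in $n_e$. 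Each such one‑variable sum is a polynomial in $s$ that vanishes at $s=0$ (empty range of summation), hence is divisible by $s$; since $\#E(\Gamma)\ge h_1(\Gamma)=h$, the whole expression $\sum_w\prod_e(w(h_e)w(h'_e))^{k_e}$ is divisible by $s^{\#E(\Gamma)}$, which absorbs the factor $s^{-h}$. Thus $\Phi_{\underline k}$ is a genuine polynomial in $s$ along the arithmetic progression $s\in r\bb Z_{>0}$; for general $s$ the analogous count is an Ehrhart--Macdonald quasi‑polynomial, so $\Phi_{\underline k}$ — and hence $f_s(D)$ — is at worst an eventual quasi‑polynomial, polynomial on each residue class modulo $r$.

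What remains is the upgrade from eventual \emph{quasi}‑polynomiality to honest eventual polynomiality for all large $s$, and this I expect to be the only real obstacle. It is precisely the phenomenon behind the polynomiality theorem for Pixton's formula \cite{Janda2016Double-ramifica,Bae2020Pixtons-formula}: the $s$‑periodic corrections produced by the $s$‑dependent shifts in the vertex congruences cancel. I would obtain it either by checking that our sum is the instance of the BHPSS generating function attached to the subdivision $\Gamma$ and to the multidegree $-D$ of $\ca L^{\frac1r}$, and quoting their polynomiality; or by rerunning their argument in the present notation — expand into characters of $\bb Z/s$, sum over the cycle space of $\Gamma$ to collapse the sum onto coboundaries, and exploit the identity $\on{div}(w^{(0)})\equiv-D$ to see that the resulting exponential sums reassemble into a polynomial. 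Everything preceding this last step is classical summation and bookkeeping; it is the cancellation argument that needs care.
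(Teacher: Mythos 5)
Your overall strategy ends up in the same place as the paper: the proof given there is a single sentence deferring to Aaron Pixton's appendix to \cite{Janda2016Double-ramifica}, and your final paragraph --- identifying the passage from eventual quasi-polynomiality to honest eventual polynomiality as the crux, and proposing to quote or rerun the Pixton/BHPSS cancellation argument --- is exactly that deferral. Your preliminary reductions (finitely many cones over a quasi-compact open, working degree by degree in $\on{Sym}^\bullet\ghost\otimes\bb Q$, reducing to the sums $\Phi_{\ul k}(s)=s^{-h}\sum_w\prod_e(w(h_e)w(h'_e))^{k_e}$) are correct and usefully expand what the paper leaves implicit.

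However, the explicit computation in your middle paragraph contains a genuine error. In the Pixton-type setup --- which is what is needed here for the formula to specialise to that of \cite{Bae2020Pixtons-formula} at $r=1$ --- the vertex congruences are congruences modulo $s$, so after the substitution $m_e=\rho_e+rn_e$ they do \emph{not} reduce to conditions on the $\rho_e$ alone: the set of weightings mod $s$ is a coset of a subgroup of $(\bb Z/s)^E$ of order $s^{h_1(\Gamma)}$, not a product of independent ranges over the edges. Consequently the sum does not factor into one-variable Faulhaber sums, and the claimed divisibility by $s^{\#E}$ is false. Concretely, for the graph with two vertices joined by two edges and $D=0$, the weightings are parametrised by a single residue $m$, with the second edge forced to carry $s-m$, and $\sum_w\prod_e w(h_e)w(h'_e)=\sum_{m=0}^{s-1}m^2(s-m)^2$ is divisible by $s$ but not by $s^2=s^{\#E}$ as a polynomial in $s$. (If instead you read the footnote's vertex condition literally as a congruence modulo $r$, your decoupling does go through for $s\in r\bb Z$, but then the set of weightings has order incompatible with the normalisation $s^{-h_1(\Gamma)}$ and the $r=1$ specialisation to \cite{Bae2020Pixtons-formula} breaks, so that cannot be the intended reading.) The correct parametrisation is by the cycle space $H^1(\Gamma,\bb Z/s)$, and both the absence of negative powers of $s$ and the polynomiality are precisely what Pixton's argument delivers; since you fall back on that argument anyway, your proof is repairable by deleting the faulty step, but as written the part you describe as ``classical summation and bookkeeping'' is the part that is wrong.
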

\begin{proof}
This can be proven by a slight variant on the arguments in Aaron Pixton's appendix to \cite{Janda2016Double-ramifica}. 
\end{proof}

Hence we can define $P_w = P_w(D)$ to be the value of this polynomial at $R=0$, a piecewise polynomial function on $S$. 
Then for each integer $0 \le d$ we define
\begin{equation}
P_d(D) = [\exp(\frac{-P_\eta(D)}{2})P_w(D)]_d
\end{equation}
where the subscript $d$ means that we take the part in homogeneous degree $d$. 

\begin{lemma}
The piecewise polynomial function $P_d$ is independent of the choice of coset representative $D$. 
\end{lemma}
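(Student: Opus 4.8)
The plan is to show that replacing $(D, \alpha_D, \ca F)$ by another triple $(D', \alpha_{D'}, \ca F')$ in the same linear equivalence class of $D$ changes none of the ingredients $P_\eta$, $P_w$, and hence none of the $P_d$. By \ref{lem:compare_reps} we may write $D' = D + \nabla\beta$ for a unique (up to constants, which we fix by requiring it to vanish on $V$) piecewise-linear function $\beta \in \sf{PL}_r$, and correspondingly $\alpha_{D'} = \alpha_D + r\beta$ up to a constant (which does not affect $\nabla$). So the whole problem reduces to checking invariance under the substitution $\alpha_D \rightsquigarrow \alpha_D + r\beta$, $D \rightsquigarrow D + \nabla\beta$ with $\beta$ vanishing on $V$.

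First I would treat $P_\eta$. Since $P_\eta(D) = \frac1r \sum_{v \in V_r} \alpha_D(v)\nabla\alpha_D(v)$ and this equals $\frac1r \pi_*(c_1(\ca O(\alpha_D))^2)$ up to the identification of piecewise-linear functions with their images, the claim is that $\pi_*(c_1(\ca O(\alpha_D + r\beta))^2) = \pi_*(c_1(\ca O(\alpha_D))^2)$ as elements of $\sf{CH}(\roots[r]{S}{\ca L})$. But $\ca O(\alpha_D + r\beta) \cong \ca O(\alpha_D) \otimes \ca O(r\beta)$, and $\ca O(r\beta) = \ca O(\beta)^{\otimes r}$; modulo the relation defining $\ghost_{s_r}$ (dividing each edge into $r$ equal parts adjoins $r$-th roots of lengths), $\ca O(\beta)$ itself lifts to a genuine line bundle — indeed $\beta$ is honestly piecewise-linear on $\frak X_{s_r}$, so $\ca O(\beta)$ is a line bundle on $X_{s_r}$ of multidegree $\nabla\beta$, and since $\beta$ vanishes on $V$ the product $c_1(\ca L)\cdot c_1(\ca O(\beta)) = 0$ just as in the displayed computation before $P_\eta$ is defined. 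The cross-term $c_1(\ca O(\alpha_D))\cdot c_1(\ca O(r\beta))$ and the square $c_1(\ca O(r\beta))^2$ must be shown to vanish after $\pi_*$; this is a computation on the tropical side, amounting to the identity $\sum_v \alpha_D(v)\nabla\beta(v) = \sum_v \beta(v)\nabla\alpha_D(v)$ (symmetry of the pairing $(\alpha,\beta)\mapsto \sum \partial\alpha \cdot \partial\beta \cdot \ell$ on edges, via summation by parts) together with $\beta|_V = 0$ and $\nabla\alpha_D$ being supported on $V$ up to multiples of $r$.

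Next I would treat $P_w$. Here the key observation is that the set of weightings $w$ modulo $s$ for $D$ and for $D' = D + \nabla\beta$ are in canonical bijection: the condition on $w$ at a vertex $v$ is that the sum of weights of half-edges at $v$ equals $-D(v) \bmod r$, and $D'(v) = D(v) + \nabla\beta(v)$, so the bijection is $w \mapsto w + \partial\beta$ (reduced mod $s$; one checks this lands in the correct range after the standard normalisation, or more cleanly one works with $\ZZ/s$-valued weightings throughout, where the shift is literally addition). Under this bijection the factor $\prod_e \exp(\frac{w(h)w(h')}2 \ell(e))$ transforms by a factor depending on $\partial\beta$; I would show this correction factor is exactly $\exp$ of (a multiple of) $\pi_*(c_1(\ca O(\beta))^2)$-type expressions together with $\pi_*(c_1(\ca O(\alpha_D))\cdot c_1(\ca O(\beta)))$, which is precisely the discrepancy between $\exp(-P_\eta(D)/2)$ and $\exp(-P_\eta(D')/2)$. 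In other words, the $s$-dependent sum $f_s(D)$ and $f_s(D')$ differ by multiplication by a fixed exponential of piecewise-polynomials, and this discrepancy is cancelled in $P_d(D) = [\exp(-P_\eta(D)/2)P_w(D)]_d$. Setting $s = 0$ (using the preceding lemma that $f_s$ is eventually polynomial) preserves this identity.

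The main obstacle I expect is bookkeeping the normalisation of weightings: a "weighting mod $s$" takes values in $\{0,\dots,s-1\}$, and the naive shift $w \mapsto w + \partial\beta$ requires a reduction back into that range, which introduces corrections by multiples of $s$ in the exponent — these are $\exp$ of $s\cdot(\text{something})\cdot\ell(e)$, and one must argue they do not survive to $s = 0$ in the polynomial, or equivalently pass to $\ZZ/s$-coefficients and track the lift carefully. Once that is under control, the proof is the completion-of-the-square identity $\exp(-P_\eta(D)/2) f_s(D) = \exp(-P_\eta(D')/2) f_s(D')$ that I sketched, which is then a finite (if slightly tedious) manipulation of the edge-products. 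I would also remark that independence of the auxiliary choice of $\ca F$ is immediate since, as noted just before this subsection, different choices of $\ca F$ give canonically isomorphic cones and identical tropical data, so none of $P_\eta$, $P_w$, $P_d$ sees $\ca F$ at all.
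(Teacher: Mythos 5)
Your overall architecture is the right one and matches the paper's: reduce via \ref{lem:compare_reps} to the substitution $D \rightsquigarrow D + \nabla\beta$, $\alpha_D \rightsquigarrow \alpha_D + r\beta$ with $\beta$ vanishing on $V$, then track the effect on $P_\eta$ and on $f_s$ separately. But your execution contains a genuine error, and your proposal is internally inconsistent about it. In the paragraph on $P_\eta$ you assert that the cross-term and the square vanish after $\pi_*$, i.e.\ that $P_\eta$ is itself invariant. This is false. The divisor $D$ is in general supported on interior vertices $u \in V_r \setminus V$, where $\nabla\alpha_D(u) = rD(u)$ need not vanish, so $\sum_v \beta(v)\nabla\alpha_D(v)$ does not vanish; and the pure square term $\sum_v \beta(v)\nabla\beta(v)$ has no reason to vanish at all (for the elementary $\beta = \beta_u$ supported at a single interior vertex $u$ on an edge of length $\ell$ it equals $-2\ell/r$). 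The actual computation gives, for such an elementary move, a change in $r^2 P_\eta$ of $\pm 2\ell r(D(u)-1)$, which is nonzero in general. Correspondingly $f_s$ is not invariant either: the shift of weightings by $\partial\beta_u$ multiplies $f_s(D)$ by $\exp((1-D(u))\ell/r)$. Only the product $\exp(-P_\eta/2)\,P_w$ is invariant, via the exact cancellation of these two corrections. Your later paragraph on $P_w$ actually describes this correct mechanism (``the discrepancy \ldots is cancelled in $P_d$''), directly contradicting your opening claim and your $P_\eta$ paragraph; as written, the proof cannot be assembled because the two halves assert incompatible things about $P_\eta$.

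Two smaller points. First, the paper makes the computation painless by further reducing to \emph{primitive} piecewise linear functions $\beta_u$ supported at a single vertex of $V_r\setminus V$ (every admissible $\beta$ is an integer combination of these); your general-$\beta$ summation-by-parts route is viable but is exactly where the bookkeeping went wrong, so I would recommend adopting the primitive reduction. Second, your concern about renormalising weightings into $\{0,\dots,s-1\}$ after the shift $w \mapsto w \pm \partial\beta$ is legitimate and is handled most cleanly, as you suggest, by the primitive case, where $\partial\beta_u$ is $\pm 1$ on the two half-edges at $u$ and $0$ elsewhere. Your remark on independence of the choice of $\ca F$ is fine and agrees with the paper.
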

\begin{proof}
Suppose that $D'$, $\alpha_{D'}$ is another choice of coset representatives. Then by \ref{lem:compare_reps} we know that there exists a piecewise linear function $\beta$ on $\Gamma_r$ vanishing on $V$ and with $D - D' = \on{div} \beta$ and $\alpha' = \alpha + r \beta$. Any such $\beta$ can be formed as an integer linear combination of what we will call \emph{primitive piecewise linear functions} on $\Gamma_r$: given a vertex $u \in V_r \setminus V$ we define a piecewise linear function $\beta_u$ to take value $0$ away from $u$, and to take value $\ell/r$ at $u$, where $\ell$ is the length of the edge of $\Gamma$ on which $u$ lives. Thus it suffices to verify that the piecewise polynomial function $P_d$ does not change when we replace $D$ by $D + \on{div} \beta_u$ and $\alpha $ by $\alpha + r\beta_u$ for some $u \in V_r \setminus V$. In what follows we fix one such $u$, and to simplify notation we write $\beta \coloneqq \beta_u$ and $\alpha \coloneqq \alpha_D$. We write $u_1$ and $u_2$ for the vertices in $V_r$ which lie immediately to either side of $u$. 

We begin by computing the effect of $\beta$ separately on $P_\eta$ and on $P_w$. First, $P_\eta$:
\begin{equation}
\begin{split}
r^2 P_{\eta_{\ca L(\alpha + r \beta)}} & = \sum_{v \in V_r} (\alpha + r \beta)(v) \nabla(\alpha + r \beta)(v)\\
& = \sum_v \alpha(v) \nabla\alpha(v)  + r \sum_v \beta(v) \nabla\alpha(v) + r \sum_v \alpha(v) \nabla\beta (v) + r^2 \sum_v \beta(v) \nabla\beta(v)\\
& = r^2P_{\eta_{\ca L(\alpha)}} + r\frac{\ell}{r} \nabla \alpha(u)  + r(- 2 \alpha(u) + \alpha(u_1) + \alpha(u_2)) - 2 r^2 \frac{l}{r}\\
& = r^2P_{\eta_{\ca L(\alpha)}} - 2\ell(\nabla\alpha(u) - r), \\
& = r^2P_{\eta_{\ca L(\alpha)}} - 2\ell r(D(u) - 1), 
\end{split}
\end{equation}
where for the penultimate equality we use the definition of $\nabla \alpha(u)$: 
\begin{equation}
\nabla\alpha(u) = \frac{\alpha(u_1) - \alpha(u))}{\ell/r} + \frac{\alpha(u_2) - \alpha(u))}{\ell/r}, 
\end{equation}
and for the final equality we use that $rD = \ul{\on{deg}} \ca L + \nabla \alpha$ and that $\ul{\on{deg}}\ca L(u) = 0$ since $u \in V_r \setminus V$. 

Next we compute the effect of addition of $\beta$ on $f_R$, and hence on $P_w$. We have 
\begin{equation}
f_R(D + \beta) \coloneqq \sum_{w} R^{-h_1(\Gamma)}\prod_{e = (h, h')} \exp(\frac{w(h)w(h')}{2} \ell(e))
\end{equation}
where the sum runs over weightings modulo $R$ on $\frak X_{s_r}$ for the divisor $D + \on{div} \beta$. We write $\partial \beta$ for the map assigning to each half-edge the slope of $\beta$ along that half-edge (this has the same data-type as a weighting, but does not in general satisfy its axioms). Now if $w$ is a weighting for $D$ then $w-  \partial\beta$ is a weighting for $D + \on{div} \beta$. Hence
\begin{equation}
f_R(D + \beta) = \sum_{w} R^{-h_1(\Gamma)}\prod_{e = (h, h')} \exp(\frac{(w - \partial \beta)(h)((w - \partial \beta)(h')}{2} \ell(e))
\end{equation}
where $w$ runs over weightings modulo $R$ for $D$. But $\partial \beta$ takes the value 1 on the edge $e_1$ from $u$ to $u_1$ and $e_2$ from $u$ to $u_2$ (both of which have length $\ell/r$), and $0$ on all the other edges. Hence 
\begin{equation}
\begin{split}
f_R(D + \beta) & = \sum_{w} R^{-h_1(\Gamma)}\prod_{e = (h, h')} \exp(\frac{(w - \partial \beta)(h)((w - \partial \beta)(h')}{2} \ell(e))\\
 & = \exp((-w(e_1) - w(e_2) + 1)\ell/r)\sum_{w} R^{-h_1(\Gamma)}\prod_{e = (h, h')} \exp(\frac{w(h)w(h')}{2} \ell(e))\\
 & = \exp((-D(u) +1)\ell/r) f_R(D). 
\end{split}
\end{equation}
Hence we deduce that 
\begin{equation}
P_w(D + \beta) = P_w(D) \exp((-D(u) +1)\ell/r). 
\end{equation}
Putting this together, we find 
\begin{equation}
\begin{split}
\exp(\frac{- P_\eta(D + \beta)}{2}) P_w(D + \beta) & = \exp(\frac{\ell}{r} (D(u) - 1))\exp(- \frac{P_\eta(D)}{2})\exp((-D(u) +1)\ell/r)P_w(D)\\
& = \exp(- \frac{P_\eta(D)}{2})P_w(D)
\end{split}
\end{equation}
as required. 
\end{proof}
This lemma implies that $P_{g-d}$ is a well-defined piecewise polynomial function on $\roots[r]{S}{\ca L}$. Next we define 
\begin{equation}
P = \sum_{d=0}^g \frac{1}{d!}\left(\frac{-\eta_\ca L}{2r^2}\right)^d\Psi(P_{g-d}) \in \on{CH}(\roots[r]{S}{\ca L}), 
\end{equation}
where $\Psi$ is the map from \cite{Holmes2021Logarithmic-int} taking a piecewise polynomial to the corresponding (operational) Chow class. 

\begin{theorem}\label{thm:DR_formula_main}
\begin{equation}
[P]_g = \DR^{\frac{1}{r}}(\ca L) \in \on{CH}(\roots[r]{S}{\ca L}). 
\end{equation}
\end{theorem}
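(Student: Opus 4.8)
The plan is to argue \'etale-locally on $\roots[r]{S}{\ca L}$, over the cones described above, and on each cone to reduce the statement to the main theorem of \cite{Bae2020Pixtons-formula} applied to an \emph{honest} line bundle. So fix a cone, with its chosen data $D$, $\alpha_D$ and $\ca F$. The starting point is the observation built into the explicit description of the cones: over this cone the universal log line bundle $\ca L^{\frac 1 r}$ becomes an honest line bundle after pulling the universal curve back along the log modification $\tilde X \to X$ that subdivides each edge into $r$ equal pieces --- the subdivision whose tropicalisation is $\frak X_{s_r}$ --- and this honest line bundle is exactly $\ca F$, satisfying $\ca F^{\otimes r} \isom \ca L(\alpha_D)$ on $\tilde X$.

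The first real step is to identify, over the cone, $\DR^{\frac 1 r, \LOG}(\ca L)$ with the logarithmic double ramification cycle $\DR^\LOG(\ca F)$ of the honest line bundle $\ca F$ on the log curve $\tilde X$. This uses invariance of the logarithmic Picard space under log modification of the curve: the pullback map $\LogPic_{X/S} \to \LogPic_{\tilde X/S}$ is an isomorphism (cf.\ \cite{Molcho2018The-logarithmic}), it carries the unit section to the unit section, and it carries the classifying map $\phi_{\ca L^{\frac 1 r}}$ to $\phi_\ca F$; hence $\phi_{\ca L^{\frac 1 r}}^! \DR^\LOG = \phi_\ca F^! \DR^\LOG = \DR^\LOG(\ca F)$, and pushing to classical Chow, $\DR^{\frac 1 r}(\ca L) = \DR(\ca F)$ over the cone.

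Next I would invoke the main theorem of \cite{Bae2020Pixtons-formula} for $\ca F$ on $\tilde X$ over the cone. This expresses $\DR(\ca F)$ as a piecewise polynomial function on $\frak X_{s_r}$, assembled from a sum over weightings modulo $s$ of products of exponentials of edge lengths --- this is precisely the sum $f_s(D)$, whose value at $s=0$ is $P_w(D)$, the required polynomiality being the lemma above, and the vertex congruences modulo $r$ reflecting the constraint $\ca F^{\otimes r}\isom\ca L(\alpha_D)$ --- together with the contribution of $\eta_\ca F = \pi_*(c_1(\ca F)^2)$. To split off the $\eta_\ca L$-dependence one uses $r\,c_1(\ca F) = c_1(\ca L(\alpha_D))$, hence $\eta_\ca F = \tfrac 1{r^2}\eta_{\ca L(\alpha_D)}$, together with the identity $\eta_{\ca L(\alpha_D)} = \eta_\ca L + \pi_*(c_1(\ca O(\alpha_D))^2)$ established above (using that $\alpha_D$ vanishes on $V$): the $\eta_\ca L$-part produces the prefactor $\tfrac 1{d!}(\tfrac{-\eta_\ca L}{2r^2})^d$, and the $\pi_*(c_1(\ca O(\alpha_D))^2)$-part produces the factor $\exp(-P_\eta/2)$ inside $P_{g-d}$. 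Comparing homogeneous degrees then identifies the resulting class with $[P]_g$ over the cone; the independence lemmas already proved show the local identities glue to the global equality $\DR^{\frac 1 r}(\ca L) = [P]_g$ in $\on{CH}(\roots[r]{S}{\ca L})$.

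I expect the main obstacle to be the second step: making precise, on each cone, the passage from the abstract definition $\DR^{\frac 1 r, \LOG}(\ca L) = \phi_{\ca L^{\frac 1 r}}^! \DR^\LOG$ to a genuine double ramification cycle of an honest line bundle as in \cite{Bae2020Pixtons-formula} --- that is, pinning down the correct subdivision $\tilde X$, checking that $\ca L^{\frac 1 r}$ really descends to a line bundle on $\tilde X$ over the whole cone compatibly with base change, and invoking log-modification invariance of $\LogPic$ with the functoriality needed to match the unit sections and classifying maps. The subsequent comparison of the two piecewise-polynomial expressions, while it requires careful tracking of powers of $r$, signs and the $s^{-h_1}$ normalisation, is routine once \cite{Bae2020Pixtons-formula} is in hand.
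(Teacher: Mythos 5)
Your proposal is correct and follows essentially the same route as the paper's own proof: subdivide each edge of the curve into $r$ equal pieces so that $\ca L^{\frac1r}$ is represented by the honest line bundle $\ca F$ with $\ca F^{\otimes r}\isom\ca L(\alpha_D)$, use invariance of logarithmic triviality under subdivision (\cite[Theorem 4.4.1]{Molcho2018The-logarithmic}) to identify $\DR^{\frac1r}(\ca L)$ with the DR cycle of $\ca F$, and then apply the $r=1$ case, i.e.\ the main formula of \cite{Bae2020Pixtons-formula}, to $\ca F$. The only cosmetic difference is that the paper handles the non-canonicity of the choice of $D$ by passing to a proper surjective cover of $\roots[r]{S}{\ca L}$ where canonical choices can be made, whereas you work cone-locally and glue via the independence lemmas; both devices serve the same purpose.
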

\begin{proof}
If we take $r=1$ then $\roots[r]{S}{\ca L} = S$, and the formula $P$ is the translation of the main formula of \cite{Bae2020Pixtons-formula} into the language of piecewise polynomial functions; see \cite[Theorem 46]{Holmes2022Logarithmic-dou} for details of this translation. The case of any $r \in \bb Z_{>0}$ can be deduced from the case $r=1$ by pullback. Namely, after passing to a proper surjective cover of $\roots[r]{S}{\ca L}$ we may make \emph{canonical} choices of the divisors $D$ (for example, by giving an orientation to each edge of the graphs $\frak X_s$ and requiring the support of $D$ to be moved as far as possible towards the start of the edge). Moreover (perhaps after another such cover) we subdivide each edge of the curve $X$ into a chain of $r-1$ copies of the projective line (so that its graph is $\frak X_{s_r}$). Then $\DR^{\frac{1}{r}}(\ca L)$ is simply the DR cycle associated to the \emph{line bundle} $\ca F$ with $\ca F^{\otimes r} \isom \ca L(\alpha_D)$. To see this, note that DR measures the locus where a log line bundle is trivial, or equivalently the locus where a line bundle is logarithmically trivial, and use that logarithmic triviality is unaffected by passing to subdivisions of the curve by \cite[Theorem 4.4.1]{Molcho2018The-logarithmic}. The formula $P$ is simply the result of applying the $r=1$ case of the formula to the line bundle $\ca F$. 
\end{proof}

\begin{remark}
The extent to which \ref{thm:DR_formula_main} can be considered a `formula' for $\DR^{\frac{1}{r}}(\ca L)$ is of course dependent on how one feels about piecewise polynomial functions. One positive point is that these things are quite easy to compute with; in \cite{Holmes2022Logarithmic-dou} we use this language to give formulae for the log DR cycle, including a SAGE implementation for piecewise polynomials on (blowups of) the moduli space of curves. 
\end{remark}

\bibliographystyle{alpha} 
\bibliography{prebib.bib}
\vspace{+16 pt}
\noindent David~Holmes\\
\textsc{Mathematisch Instituut, Universiteit Leiden, Postbus 9512, 2300 RA Leiden, Netherlands} \\
  \textit{E-mail address}: \texttt{holmesdst@math.leidenuniv.nl}
  
  \noindent Giulio~Orecchia\\
\textsc{White Oak Asset Management, 
Rue du Rhône 13, 1204 Genève,
Switzerland} \\
  \textit{E-mail address}: \texttt{giulioorecchia@hotmail.com}

\end{document}